\def\BState{\State\hskip-\ALG@thistlm}
\theoremstyle{plain}
\newtheorem*{thm1.2}{(1.2) Theorem}
\newtheorem*{thm1.3}{(1.3) Theorem}
\newtheorem*{thm1.4}{(1.4) Theorem}
\newtheorem*{propA*}{Proposition A}
\newtheorem*{propB*}{Proposition B}
\newtheorem*{thmC*}{Theorem C}
\newtheorem*{propD*}{Proposition D}
\newtheorem{prop}{Proposition}[section]
\newtheorem{thm}[prop]{Theorem}
\newtheorem{problem}[prop]{Problem}
\newtheorem{conj}{Conjecture}
\newtheorem{cor}[prop]{Corollary}
\newtheorem{lemma}[prop]{Lemma}
\theoremstyle{definition}
\newtheorem{point}[prop]{}
\newtheorem{Def}[prop]{Definition}
\newtheorem*{Def*}{Definition}
\newtheorem{example}[prop]{Example}
\newtheorem*{example*}{Example}
\newtheorem*{notation*}{Notation}
\newtheorem*{question*}{Question}
\newtheorem{remark}[prop]{Remark}
\newtheorem*{rem}{Remark}
\newenvironment{customthm}[1]
{\innercustomthm}
{\endinnercustomthm}
\newcommand{\HH}{\mathcal H}
\newcommand{\Oliver}[1]{{\color{orange}Oliver: #1}}
\title{Computational Complexity of learning algebraic varieties}
\author{Oliver G\"afvert}
\address{Department of mathematics, KTH, 10044,
	Stockholm, Sweden}
\email{oliverg@math.kth.se}
\urladdr{https://people.kth.se/~oliverg/}
\begin{document}
	\maketitle

\begin{abstract}
	We analyze the complexity of fitting a variety, coming from a class of varieties, to a configuration of points in $\Bbb C^n$. The complexity measure, called the \textit{algebraic complexity}, computes the Euclidean Distance Degree (EDdegree) of a certain variety called the \textit{hypothesis variety} as the number of points in the configuration increases. Finally, we establish a connection to complexity of architectures of polynomial neural networks.

%
%
	
	For the problem of fitting an $(n-1)$-sphere to a configuration of $m$ points in $\Bbb C^n$, we give a closed formula of the algebraic complexity of the hypothesis variety as $m$ grows for the case of $n=1$. For the case $n>1$ we conjecture a generalization of this formula supported by numerical experiments.
\end{abstract}

\section{Introduction}

A fundamental problem in data analysis is recovering model parameters from noisy measurements.
This is a basic problem in manifold learning/dimensionality reduction \cite{Lee:2007:NDR:1557216}, statistical regression and learning theory \cite{hastie01statisticallearning, mlbook}. We phrase this problem in the setting of algebraic geometry and use tools coming from this field to study it. The model is in our case an algebraic variety $V$, the parameters are coefficients of polynomials defining an ideal cutting out $V$ and the measurements are points sampled from $V$ with noise from a specified distribution. 


To recover the unknown variety $V$ we assume that the points $p_1, p_2, \dots, p_m$ are sampled from $V$ with Gaussian noise and that $V$ comes from a class of varieties $\HH$. We then look for the varieties lying closest to the points in the sense that small perturbations lie on a variety in $\HH$. If $\HH$ is the class of all hypersurfaces of degree $\leq d$, this is called \textit{polynomial regression}, which is a special case of \textit{linear regression} \cite{mlbook}. In this paper we consider more general classes of varieties and therefore use the broader name \textit{algebraic regression}. The goal of the paper is to analyze the computational complexity of finding the variety in a class $\HH$ that best fit a given set of samples. To this end, we develop a complexity measure, called the \textit{algebraic complexity of $\HH$}, based on the Euclidean Distance Degree (EDdegree) \cite{edd}.


The class of varieties $\HH$ is in the theory of \textit{Probably Approximately Correct (PAC) learning} \cite{mlbook} called a \textit{hypothesis class}. The two fundamental invariants of a hypothesis class are the \textit{sample complexity} and the \textit{computational complexity}. The sample complexity tells you how many points you need to sample in order to recover the variety with some probability. It measures the richness, or expressibility of a hypothesis class, while the computational complexity measures the complexity of implementing the learning rule, which in our case means solving an optimization problem. It tells you the amount of work you need to perform in order to obtain an \textit{optimal} hypothesis. To analyze the sample complexity of $\HH$, one may use tools such as the \textit{Vapnik–Chervonenkis (VC) dimension} and the \textit{Rademacher complexity} \cite{mlbook}. Analyzing the computational complexity is much harder as many learning problems are NP-hard to compute \cite{mlbook}. It would be desirable to characterize their computational complexity relative to each other. We propose using the Euclidean Distance Degree (EDdegree) to analyze the computational complexity. The EDdegree measures the \textit{algebraic} complexity of a polynomial optimization problem and we will show in this paper how we can use it to say something about the computational complexity of regression problems. 

In our setting, the hypothesis class $\HH$ is defined by polynomial equations and is thus a variety. We therefore refer to it as the \textit{hypothesis variety}. Rather than measuring expressibility of $\HH$, like the VC-dimension, the EDdegree instead characterizes the complexity of finding an \textit{optimal} hypothesis in $\HH$ for a learning rule, given a set of samples.  The EDdegree is the degree of the polynomial describing the \textit{optimal} solutions to the regression problem of fitting a function from $\HH$ to the given samples. By optimal, we mean local minimum/maximum or saddle points of an optimization problem (see Section \ref{sec:3}), these are called the \textit{critical points}. The number of critical points is dependent on the number of samples and therefore we consider the growth of the EDdegree of $\HH$ as the number of samples grow. We call this function the \textit{algebraic complexity} of $\HH$. The following example illustrates the meaning of the critical configurations of a specific point configuration. The varieties passing through the critical configurations are the varieties that best fit the configuration.
\begin{example}\label{ex:circle}
	Consider a configuration of four points in $\Bbb C^2$.  The following show four circles passing through \textit{real critical configurations} of Problem (\ref{eq:prob}), out of a total of 26 critical configurations:
	\begin{center}
		\begin{figure}[H]
			\begin{tabular}{llll}
				\includegraphics[width=0.23\linewidth]{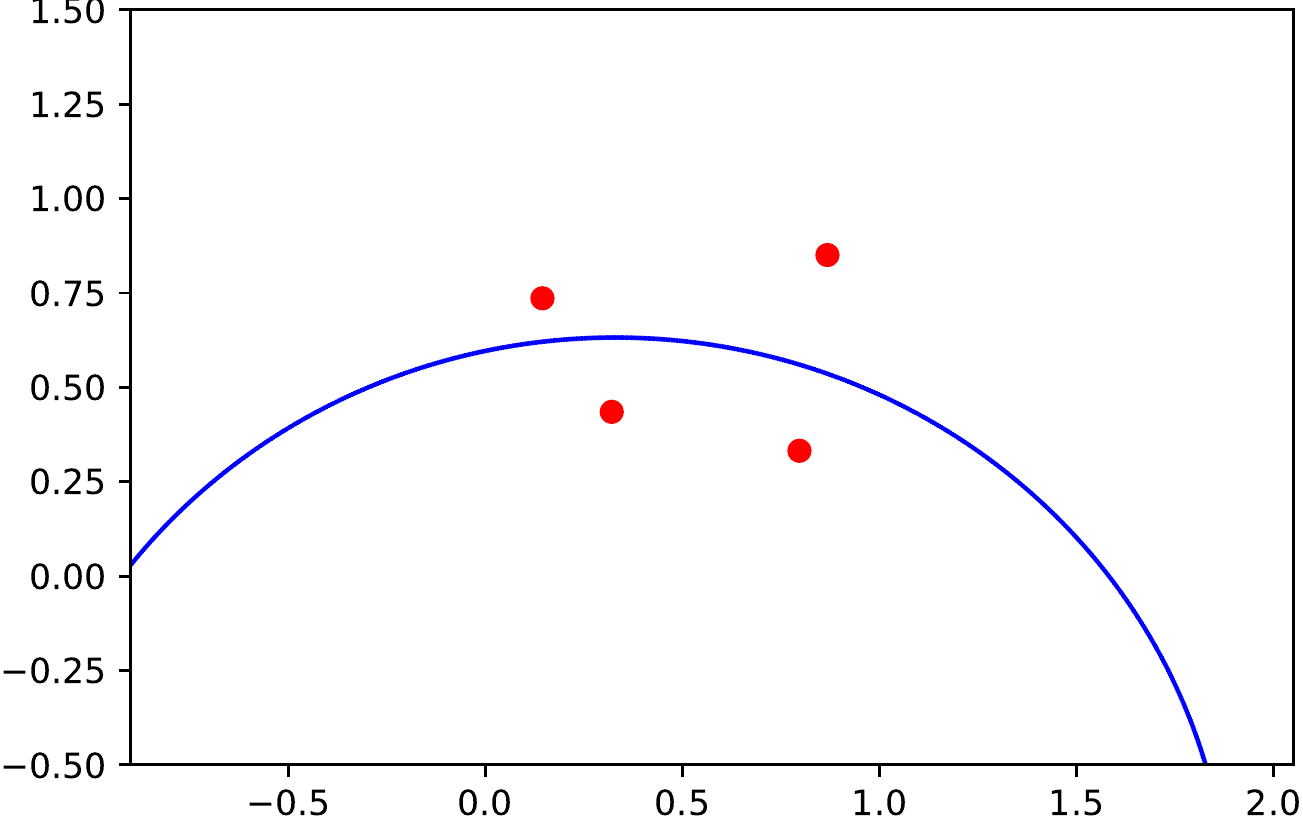}
				&
				\includegraphics[width=0.23\linewidth]{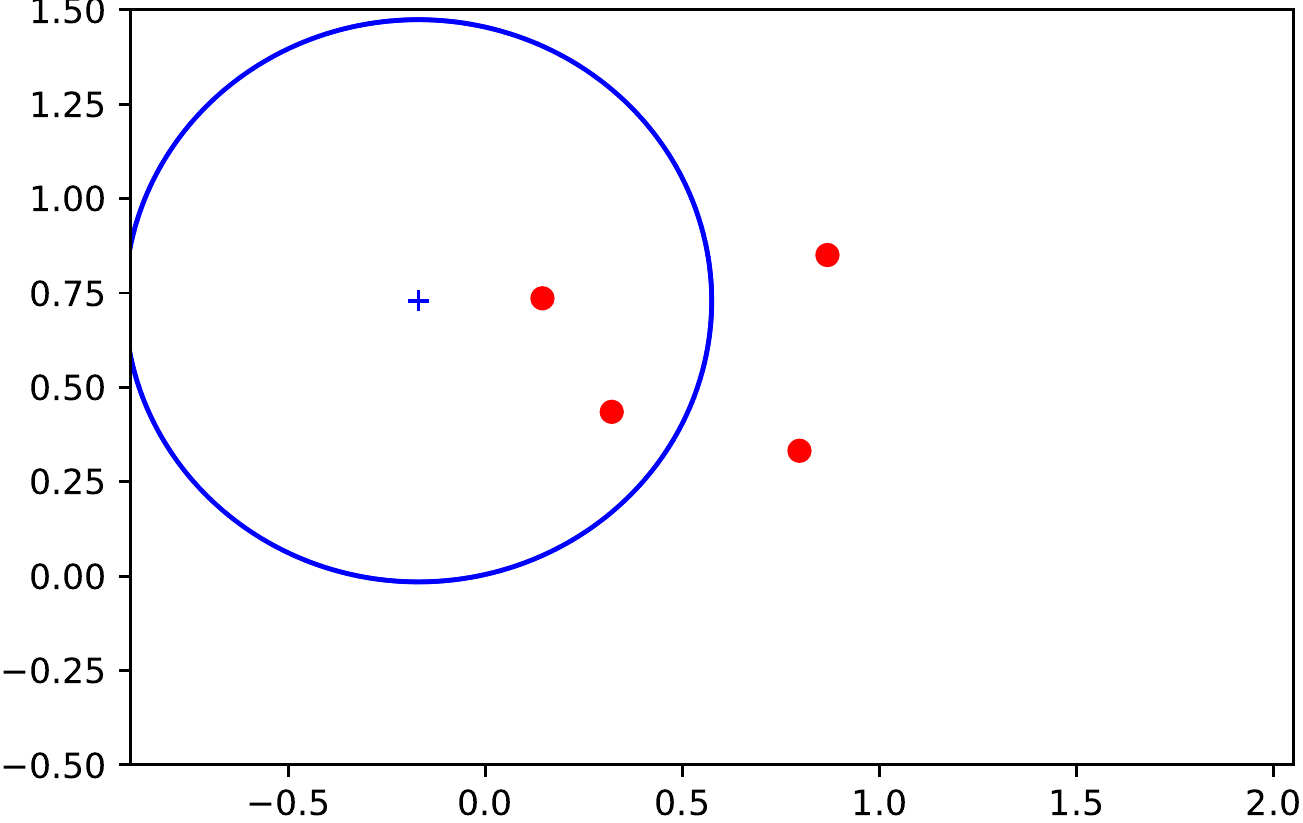}&
				\includegraphics[width=0.23\linewidth]{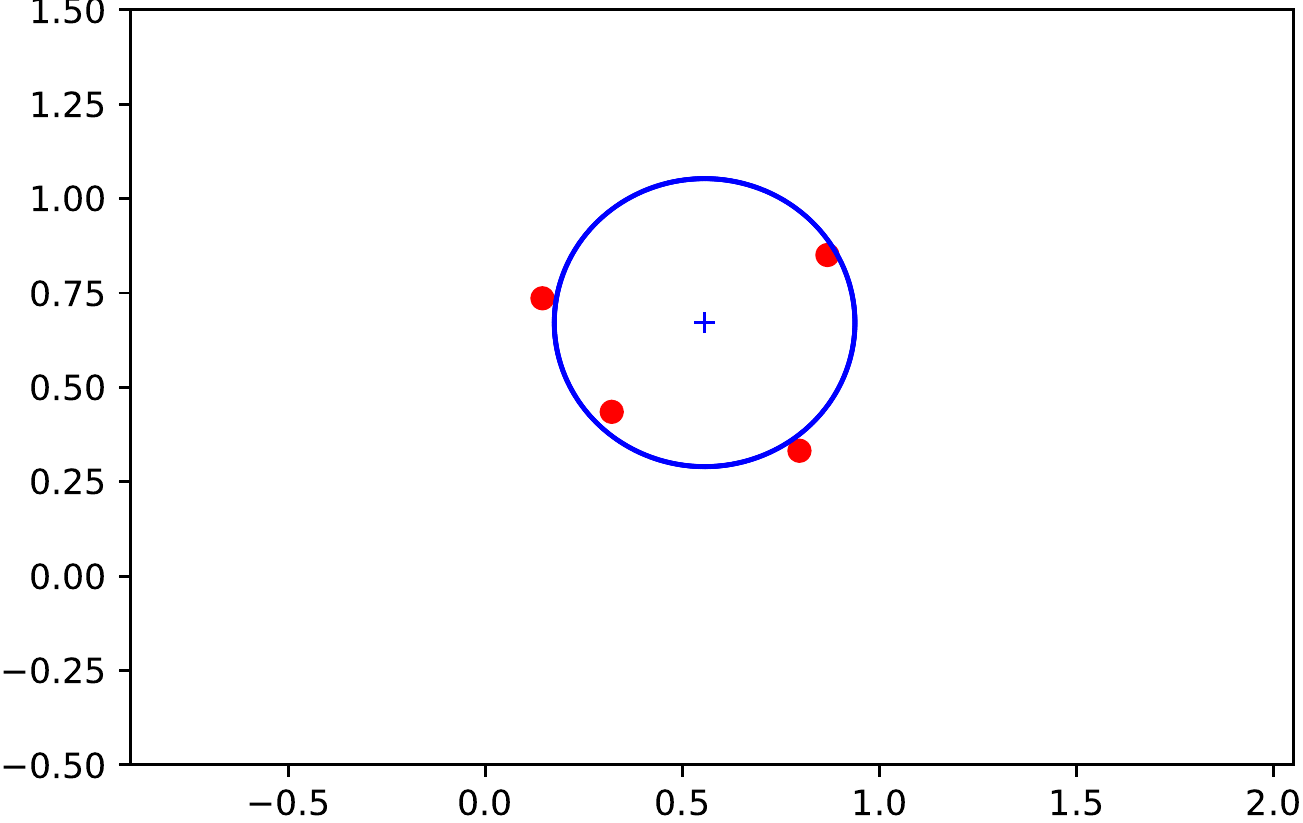}
				&
				\includegraphics[width=0.23\linewidth]{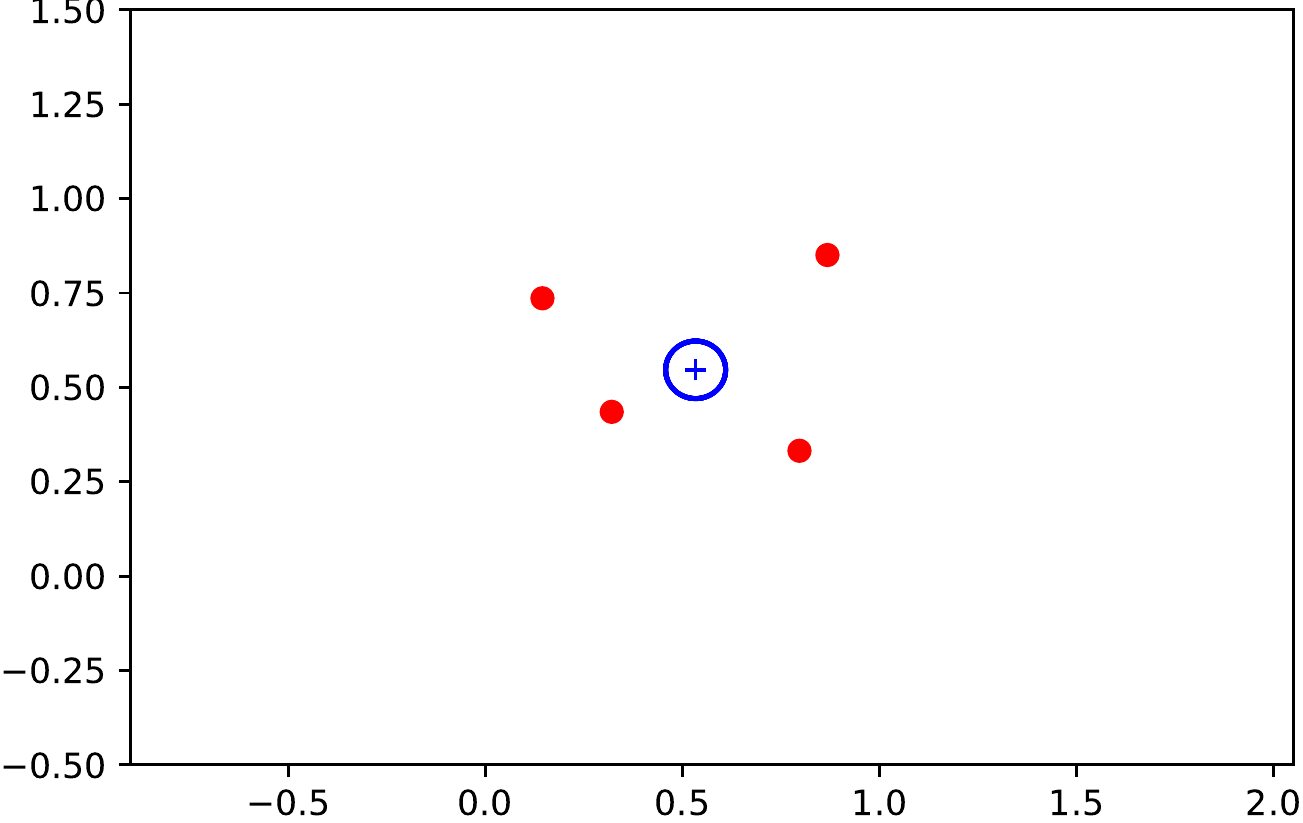}
			\end{tabular}
			\caption{Four of the critical circles of the point configuration.}
		\end{figure}
	\end{center}
\end{example}
The procedure in the above example provides a non-linear generalization of \textit{principal component analysis (PCA)} (see Section \ref{sec:edd}). For the hyperplane class we get the usual (linear) principal components while for other choices of $\HH$ we get something else. For the non-linear hypothesis varieties we consider in this paper, the number of principal components we get is dependent on the number of points in the configuration.

In this paper we investigate the algebraic complexity of a series of classes of varieties. Starting from the simplest classes, the class of hyperplanes in $\Bbb C^n$ and the class of affine subspaces of codimension $\leq r$, both of which have closed form expressions for their algebraic complexity. We then continue with analyzing the class of $(n-1)-$spheres, for which we prove the following closed formula for the algebraic complexity in the case $n=1$:\begin{customthm}{1}\label{thm:n1}
	$$EDdegree(\HH_{m, 1}) = 2^{m-1}-1$$
	and if $p$ is a real configuration, then the critical points of $\HH_{m, 1}$ with respect to $p$ are all real.
\end{customthm} For the general case, $n\geq 1$, we conjecture the following formula, based on the numerical numerical experiments in Table \ref{tab:exp}:
\begin{conj} \label{conj:circles}
	The number of critical $(n-1)-$spheres of a generic configuration of $m$ points in $\Bbb C^n$, where $m>n+2$, is given by: 
	$$EDdegree(\HH_{m, n}) = EDdegree(\HH_{m-1, n}) +\sum_{k=0}^{m-1} { m-1 \choose k }\sum_{l=0}^n{{k+1} \choose l}- 2^{m-1} - m2^{m-2}$$
\end{conj}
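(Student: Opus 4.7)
Since the statement is a conjecture rather than a theorem, the aim would be to establish this recursion from first principles. The natural framework is the determinantal description of $\HH_{m,n}$: a configuration $(p_1, \ldots, p_m) \in (\mathbb{C}^n)^m$ lies in $\HH_{m,n}$ exactly when the $m \times (n+2)$ matrix
$$M(p_1, \ldots, p_m) = \bigl( \lvert p_i \rvert^2,\ (p_i)_1,\ \ldots,\ (p_i)_n,\ 1 \bigr)_{i = 1, \ldots, m}$$
has rank at most $n+1$, since this is precisely the condition that the $p_i$ lie on a common sphere or hyperplane. I would proceed by induction on $m$, using as base case $m = n + 3$, where $\HH_{n+3, n}$ is a codimension-two determinantal variety whose EDdegree can be computed directly via the polar-class/Chern-class framework of Draisma--Horobet--Ottaviani--Sturmfels--Thomas in~\cite{edd}.

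For the inductive step, the pivot is the forgetful rational map
$$\pi : \HH_{m, n} \dashrightarrow \HH_{m-1, n} \times \mathbb{C}^n,$$
which imposes the sphere constraint only on the first $m - 1$ points. On an open dense subset where the first $m - 1$ points determine a unique sphere $S(p_1, \ldots, p_{m-1})$, the map $\pi$ exhibits $\HH_{m, n}$ as the graph of the incidence condition $p_m \in S(p_1, \ldots, p_{m-1})$. For generic data $(q_1, \ldots, q_m)$, I would use this fibration structure to decompose critical points of $\sum_i \lvert q_i - p_i \rvert^2$ on $\HH_{m,n}$. A Lagrange multiplier analysis should identify a sub-family of critical configurations in which $p_m$ is the orthogonal projection of $q_m$ onto $S(p_1, \ldots, p_{m-1})$ and $(p_1, \ldots, p_{m-1})$ is itself critical on $\HH_{m-1, n}$ for appropriately modified data, thereby contributing the $EDdegree(\HH_{m-1, n})$ term of the recursion.

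The remaining contribution should then match $\sum_{k = 0}^{m-1} \binom{m-1}{k} \sum_{l = 0}^{n} \binom{k+1}{l} - 2^{m-1} - m \cdot 2^{m-2}$, which I would derive as a weighted Chern--Mather class calculation on the determinantal stratification of $\HH_{m,n}$. The positive double sum has the shape of an enumeration: choose $S \subseteq \{1, \ldots, m-1\}$ of size $k$, then choose a subset of $S \cup \{m\}$ of size $l \le n$; such pairs naturally index strata where $k$ of the first $m-1$ points interact non-trivially with $p_m$ through an $l$-dimensional face of the rank condition on $M$. The subtractions $-2^{m-1} = -\sum_k \binom{m-1}{k}$ and $-m \cdot 2^{m-2}$ should correct for overcounting along two degeneration loci: configurations whose limiting sphere flattens to a hyperplane (the leading column of $M$ becomes dependent on the others), and configurations where two of the $p_i$ coincide on the sphere.

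The main obstacle will be the singularity analysis of $\HH_{m, n}$. The determinantal variety is singular along the sub-stratum of rank-$n$ matrices $M$, and the squared-distance function can acquire spurious critical points on this locus which must be either excluded (in the sense of the isolated Euclidean distance degree of~\cite{edd}) or counted with precise multiplicities. Before attempting the full induction I would verify the recursion numerically for small $(m, n)$ beyond those in Table~\ref{tab:exp} via homotopy continuation, pin down the exact stratification whose alternating Euler-characteristic calculation yields the conjectured combinatorial identity, and then lift this verification to a rigorous proof through the intersection-theoretic framework of~\cite{edd}.
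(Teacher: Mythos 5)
The statement you are addressing is a \emph{conjecture} in the paper: the author offers no proof, only the numerical evidence of Table \ref{tab:exp} together with the observation that the formula specializes correctly to the proven $n=1$ case (Theorem \ref{thm:n1}, which is established by an explicit elimination of the critical equations, a completely different technique from what you propose). So your task was genuinely open, and your submission is accordingly a research plan rather than a proof. As a plan it has reasonable ingredients --- the determinantal description of $\HH_{m,n}$ via the $m\times(n+2)$ matrix $\bigl(|p_i|^2, p_i, 1\bigr)$ is correct, and the forgetful fibration $\HH_{m,n}\dashrightarrow \HH_{m-1,n}$ with generic $(n-1)$-dimensional (sphere) fibers is dimensionally consistent --- but the combinatorial heart of the argument is entirely missing. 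You never derive the double sum $\sum_k\binom{m-1}{k}\sum_{l\le n}\binom{k+1}{l}$ or the corrections $-2^{m-1}-m2^{m-2}$; you only observe that they ``have the shape of'' an enumeration and ``should'' arise from a stratification you do not construct. That is reverse-engineering the conjectured answer, not proving it.

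Two further technical points would block the route you sketch even if the combinatorics were filled in. First, the polar-class/Chern-class formulas of \cite{edd} compute ED degrees for varieties in \emph{general position} relative to the isotropic quadric; here the determinantal variety is embedded in configuration space $\Bbb C^{mn}$ through the highly non-generic quadratic map $p_i\mapsto(|p_i|^2,p_i,1)$, and the relevant distance is the configuration-space Euclidean distance, not a generic quadric on the space of matrices. (This is exactly why the paper instead works with $C_{m,n}$ and the pulled-back $\beta EDdegree$, proving $EDdegree(\HH_{m,n})=\beta EDdegree(C_{m,n})$ for $m>n+1$ in Theorem \ref{thm:eq} and then computing numerically.) Second, ED critical points do not decompose along a fibration the way you suggest: the normal-space condition on $\HH_{m,n}$ couples the base and fiber directions, so criticality of $(p_1,\dots,p_{m-1})$ on $\HH_{m-1,n}$ for ``appropriately modified data'' plus orthogonal projection of $q_m$ is neither necessary nor sufficient, and no Lagrange-multiplier computation is given to substantiate the claimed sub-family contributing $EDdegree(\HH_{m-1,n})$. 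Your closing paragraph essentially concedes all of this; what you have is a sensible direction for attacking the conjecture, not a proof of it.
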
 Finally we end by considering paraboloids and ellipsoids for which we provide numerical results on their algebraic complexity. The \textbf{main contributions} of the paper are the following:
\begin{itemize}
	\item defining the algebraic complexity (Definition \ref{def:hypvar}) as a complexity measure for algebraic regression problems.
	\item developing tools, such as a weighted EDdegree called $\beta EDdegree$, for computing and approximating the algebraic complexity (Theorem \ref{thm:ineq}, Corollary \ref{prop:cond}, Corollary \ref{cor:crit} and Proposition \ref{prop:ci}).
	\item proving and conjecturing closed formulas of the algebraic complexity for prescribed classes of varieties (Theorem \ref{thm:n1} and Conjecture \ref{conj:circles}) and relating the algebraic complexity to the generalized Eckart-Young theorem (Theorem \ref{thm:gene}).
	\item establishing a connection between the algebraic complexity of the prescribed classes in Section \ref{sec:4} and architectures of polynomial neural networks (see Section \ref{sec:nn}). 
\end{itemize}
The computation of the algebraic complexity relies on computing the EDdegree. We compute the EDdegree directly from the defining equations of the \textit{critical ideal} (see Section \ref{sec:edd}) using numerical methods. Other methods are possible, such as computing it using Chern classes, polar classes and their extensions to singular varieties \cite{edd, ZHANG201855} or the Euler characteristic and Euler obstruction function \cite{aluffi2018, 2018arXiv181205648M, 2019arXiv190105550M}. For small examples we compute the EDdegree using Macaulay2 \cite{M2} and for all other examples we use Bertini \cite{BHSW06} and the method of regeneration \cite{10.2307/41104703}.  Another alternative is to use the method by Martín del Campo and Rodriguez \cite{MARTINDELCAMPO2017559} using monodromy loops to compute the EDdegree, but this we leave for future work.

\subsection{Future work}
For future work we would like to investigate more complicated classes of varieties such as polynomial neural networks, which are neural networks \cite{mlbook} with polynomial activation functions. A special type of polynomial neural network was recently studied by Kileel et. al. in the paper \cite{2019arXiv190512207K} where they compute the dimension of $C_{1, n}$ (see Section \ref{sec:3}) for these types of networks. They also note that the EDdegree, and by extension the algebraic complexity, could be useful to characterize the complexity of different architectures of these networks and polynomial neural networks in general. The algebraic complexity measures the number of critical points of the Euclidean distance function, which in the setting of polynomial neural networks is the same as the \textit{mean-squared-error (MSE)} objective function, which is one of standard objective functions used in practice.

\subsection{Organization} The paper is organized as follows. In Section \ref{sec:2} we review background, such as multivariate polynomial interpolation, the Euclidean distance degree and define the relevant terms we will use. In Section \ref{sec:3} we define the \textit{hypothesis variety}, which is our main object of study, and the algebraic complexity. We show a strategy of computing the EDdegree of the hypothesis variety and determine its dimension. In Section \ref{sec:4} we give results on prescribed classes of varieties and numerical computation of the algebraic complexity of their hypothesis varieties. For the case of spheres, we provide a closed formula for the algebraic complexity in the $n=1$ case and conjecture a generalization of the formula for $n>1$ supported by numerical results. We end the section with establishing a connection between the studied classes and architectures of polynomial neural networks.

\subsection{Acknowledgments}
This work was partly made during the authors visit to ICERM during the fall of 2018 as part of the semester program on Non-linear algebra. The author would like to thank Sandra Di Rocco and David Eklund who have been of enormous help during the course of this project, Kathlén Kohn for suggesting to look at the EDdegree, and Bernd Sturmfels and Paul Breiding for useful discussions in the beginning of the project. 

\section{Preliminaries}\label{sec:2}
\begin{point}\label{pt point configuration}
	A \textbf{point configuration} $x= (x_1, \dots, x_m)\in \Bbb C^{mn}$ is a configuration of $m$ points $x_1, x_2, \dots, x_m$ in $\Bbb C^n$. When appropriate, we will interpret $x$ as a $(m\times n)-$matrix where each point, $x_i$, is a row in the matrix.
\end{point}
\begin{point}\label{pt veronese}
	The \textbf{Veronese map of degree $d$} is the map $v_d\colon \Bbb P^n \to \text{Sym}^d\, \Bbb P^n$ sending $x\in \Bbb P^n$ to its $d$th symmetric power. The \textbf{affine Veronese map of degree $d$} is the dehomogenization of $v_d$ in the sense that it sends $x\in \Bbb C^n$ to all possible monomials of degree $\leq d$, which is a vector in $\Bbb C^{{n+d} \choose d}$.
\end{point}
\begin{point}
	The \textbf{Euclidean distance function} is defined as $d_E(u, v) = \sum_{i=1}^n (u_i-v_i)^2$ where $u$ and $v$ are points in either $\Bbb R^n$ or $\Bbb C^n$. Let $V\subseteq \Bbb C^n$ be a variety and $p$ be a point in $\Bbb C^n$. A \textbf{critical point} of the Euclidean distance function with respect to $p$ and $V$ is a point $q\in V$ such that the line from $p$ to $q$ lies in the normal space of $V$ at $q$. 
\end{point}

\subsection{Multivariate Polynomial Interpolation}
Let $x = (x_1, \dots, x_m)\in \Bbb C^{mn}$ be a point configuration of $m$ points in $\Bbb C^n$ and assume that each point $x_1, \dots, x_m$ is sampled from a variety $V\subseteq \Bbb R^n$ without noise. It is then straightforward to proceed as in \cite{Breiding2018} to numerically estimate the coefficients of a set of polynomials defining $V$. The main tool for doing this is the \textit{Vandermonde matrix}, defined as follows:

\begin{Def}
	The \textit{Vandermonde matrix of degree $d$} of a point configuration $x$ is defined as:
	$$ U_{\leq d}(x) := \begin{bmatrix}
	v_d(x_1) \\
	\vdots \\
	v_d(x_m)
	\end{bmatrix}$$
	where $v_d\colon \Bbb C^n \to \Bbb C^{{n+d \choose d}}$ is the affine Veronese map of degree $d$.
\end{Def} 

\begin{example}
	For $X = \begin{bmatrix}
	x_1\\
	x_2
	\end{bmatrix}$ and $m=n=d=2$ we have that 
	$$v_2 \colon \Bbb C^2 \to \Bbb C^6,\ \ \ v_2(x, y) = [x^2, xy, y^2, x, y, 1]$$
	$$U_{\leq 2}(X) = \begin{bmatrix}
	v_d(x_1) \\
	v_d(x_2)
	\end{bmatrix} = \begin{bmatrix}
	x_{11}^2&x_{11}x_{12}& x_{12}^2& x_{11}& x_{12}& 1 \\
	x_{21}^2&x_{21}x_{22}& x_{22}^2& x_{21}& x_{22}& 1
	\end{bmatrix}$$
	Note that the coefficients of any polynomial up to degree 2 that vanishes on both $x_1$ and $x_2$ lie in the kernel of $U_{\leq 2}(X)$. 
\end{example}

In fact, it is true in general that the coefficients of any polynomial, $f$, of degree $\leq d$, that vanishes on $x$, lie in $\ker{U_{\leq d}(x)}$. A generating set for an ideal cutting out $V$ can thus be obtained by a choice of basis for $\ker{U_{\leq d}(x)}$. The problem one now faces is dealing with the numerical errors associated with computing the kernel of the Vandermonde matrix. For this we refer to \cite{Breiding2018}, where the authors explore ways of numerically estimating a generating set for the ideal cutting out $V$.

\subsection{Euclidean Distance Degree}\label{sec:edd}
The Euclidean Distance Degree (EDdegree) \cite{edd} counts the number of critical points of the squared Euclidean distance function from a generic point $p\in \Bbb C^n$ to a variety $V \subseteq \Bbb C^n$. The EDdegree thus counts the number of local minima, maxima and saddle points of this distance function. In this sense, the EDdegree is an algebraic complexity measure of polynomial optimization problems. It measures the degree of the ideal describing the critical points of the objective function subject to polynomial contraints. 

The most straighforward way of computing the EDdegree is by describing it as the degree of a certain ideal. Let 
$$I_{sing} = I + \langle c\times c\text{-minors of $J(f)$}\rangle$$
where $c$ is the codimension of $I$ and $J(f)$ the Jacobian matrix of the defining equations of $I = \langle f_1, \dots, f_s \rangle\subset \Bbb C[y_1, \dots, y_r]$. We assume that $I$ is the radical ideal of an irreducible variety for which we want to compute the EDdegree. The EDdegree is then equal to the degree of the following \textit{critical ideal}, which is defined as the saturation:
\begin{equation}\label{eq:eddideal}
\Bigg( I + \Bigg\langle (c+1)\times (c+1)\text{-minors of } \begin{pmatrix}
p-y\\
J(f)
\end{pmatrix} \Bigg\rangle \Bigg) \colon (I_{sing})^{\infty}
\end{equation}
By \cite[Theorem 2.7]{edd}, the critical ideal is always zero-dimensional if $p$ is a generic point in $\Bbb C^n$, and the points of the critical ideal are exactly the critical points of the squared Euclidean distance function from $p$ to $V$.

The following result gives a general upper bound of the EDdegree:
\begin{prop}[\cite{edd}]\label{prop:bound}
	Let $V$ be a variety of codimension $c$ in $\Bbb C^n$ that is cut out by polynomials $f_1, \dots, f_c, \dots, f_s$ of degrees $d_1\leq \dots \leq d_c\leq \dots \leq d_s$. Then
	$$ \text{EDdegree}(V) \leq d_1 d_2 \cdots d_c \sum_{i_1+\dots + i_c\leq n-c} (d_1-1)^{i_1}(d_2-1)^{i_2}\cdots (d_c-1)^{i_c}$$
	Equality holds when $V$ is a general complete intersection of codimension $c$. 
\end{prop}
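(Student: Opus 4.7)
The plan is to reduce the bound to a complete intersection, parameterize critical points by Lagrange multipliers, and count them with a multi-projective Bezout argument; equality then follows from a standard transversality/genericity argument.

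First I would reduce to the complete intersection case. Choose generic $\Bbb C$-linear combinations $g_1,\ldots,g_c$ of $f_1,\ldots,f_s$ with $\deg g_i = d_i$, so that $V' := V(g_1,\ldots,g_c)$ is a complete intersection of codimension $c$ containing $V$. At a generic smooth point of $V$ the gradients $\nabla g_1,\ldots,\nabla g_c$ span the conormal space, so critical points of the squared Euclidean distance on $V$ are accounted for by those on $V'$, and it suffices to bound $\mathrm{EDdegree}(V')$.

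Next, I would parameterize critical points via Lagrange multipliers. A smooth point $y \in V'$ is critical for $\|y-p\|^2$ iff there exists $\lambda=(\lambda_1,\ldots,\lambda_c)\in\Bbb C^c$ satisfying
$$g_i(y)=0 \quad (1\le i\le c), \qquad y_j-p_j=\sum_{i=1}^{c}\lambda_i\,\partial_j g_i(y) \quad (1\le j\le n),$$
a system of $n+c$ polynomial equations in the $n+c$ unknowns $(y,\lambda)$.

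Then, I would compactify to $\Bbb P^n\times\Bbb P^c$ with homogenizing variables $y_0,\lambda_0$ and apply the multi-homogeneous Bezout theorem. After bihomogenization, each $g_i = 0$ has bidegree $(d_i,0)$, and each Lagrange equation, after uniform padding by powers of $y_0$, can be put into bidegree $(d_c-1,1)$. The raw Bezout count overshoots the claimed bound; the sharp formula $d_1\cdots d_c\sum_{i_1+\cdots+i_c\le n-c}\prod_j(d_j-1)^{i_j}$ is obtained by subtracting the excess-intersection contributions of extraneous components supported on $\{y_0=0\}$ (points at infinity of $V'$) and on $\{\lambda_0=0\}$ (where the Lagrange multipliers degenerate, corresponding geometrically to $V'$ meeting the isotropic quadric of $\Bbb C^n$). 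For the equality statement, a standard transversality argument shows that for generic $g_1,\ldots,g_c$ and generic $p$ the variety $V'$ is smooth, every solution of the Lagrange system is isolated and transverse, and no solution lies in either extraneous locus, so the refined Bezout count is sharp and equals $\mathrm{EDdegree}(V')$.

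The hardest step will be the Bezout bookkeeping: extracting the precise summation in the statement demands a careful computation of the Segre-class contributions of the components on $\{y_0=0\}$ and $\{\lambda_0=0\}$. An alternative that bypasses this accounting is to invoke the polar-degree identity $\mathrm{EDdegree}(V')=\sum_{i}\delta_i(V')$ and compute each $\delta_i(V')$ from the Chern classes of the cotangent bundle of a smooth complete intersection using the adjunction formula; expanding the resulting generating function in the Chow ring of $\Bbb P^n$ produces the displayed sum directly.
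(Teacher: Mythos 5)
The paper does not prove this proposition; it is quoted verbatim from the reference \cite{edd} (Draisma, Horobe\c{t}, Ottaviani, Sturmfels, and Thomas), so there is no in-paper argument to compare against. Your reduction to a complete intersection $V'=V(g_1,\ldots,g_c)$ is sound: since $V$ is a component of $V'$ and they share tangent spaces at generic smooth points of $V$, every ED-critical point of $V$ is one of $V'$, so $\mathrm{EDdegree}(V)\le\mathrm{EDdegree}(V')$. The Lagrange formulation is also the right starting point.

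The gap is in the counting step. Padding every Lagrange equation to the uniform bidegree $(d_c-1,1)$ and taking the multihomogeneous B\'ezout number in $\Bbb P^n\times\Bbb P^c$ yields the coefficient of $h^n k^c$ in $\bigl(\prod_{i=1}^c d_i h\bigr)\bigl((d_c-1)h+k\bigr)^n$, which is $d_1\cdots d_c\binom{n}{c}(d_c-1)^{n-c}$. This is not the target formula: the target $d_1\cdots d_c\sum_{i_1+\cdots+i_c\le n-c}\prod_j(d_j-1)^{i_j}$ involves each individual $d_j-1$ to varying powers (equivalently, it is the $t^{n-c}$-coefficient of $\prod_j d_j/\bigl((1-t)\prod_j(1-(d_j-1)t)\bigr)$), and uniform padding by powers of $y_0$ destroys exactly this per-equation degree information. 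It is therefore not merely that the B\'ezout count ``overshoots and must be corrected by excess intersection''; the compactification itself discards the structure that produces the displayed sum, so there is no excess-intersection bookkeeping that will recover it from $\Bbb P^n\times\Bbb P^c$ with equal padding. Your alternative route---polar degrees and the Chern-class generating function for a smooth complete intersection---is the one that actually produces the formula (one expands $c(TV')=(1+h)^{n+1}/\prod_j(1+d_jh)$ and extracts the relevant coefficients), and this is essentially how the result is established in \cite{edd}. As written, however, you invoke this as a fallback without carrying it out; the proposal thus states the correct framework but leaves the step that yields the specific closed form unjustified, and the primary counting strategy you give would produce a different (and generally larger) bound.
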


\begin{example}(Eckart-Young Theorem)\label{ex:hyp}
	Let $p\in \Bbb C^{mn}$ be a configuration of $m$ points in $\Bbb C^n$ and suppose we want to approximate $p$ with a hyperplane in $\Bbb C^n$. The objective function that we optimize is the sum of the squared Euclidean distance function from each point in the configuration to its closest point on the hyperplane. Note that this is the Fréchet-norm of the matrix representation of $p-x$ where $x$ is a configuration lying on the hyperplane. It is a consequence of the Eckart-Young Theorem that the \textit{critical} hyperplanes can be computed analytically using the \textit{singular value decomposition (SVD)}. They are computed by first centering the configuration around the origin and then computing the SVD of $p$ represented as a matrix. Suppose $m\geq n$ and $$p=W_1\text{diag}(\sigma_1, \dots, \sigma_n)W_2$$ is the SVD of $p$ when represented as an $(m\times n)-$matrix. Then the critical hyperplanes are given by kernel elements of the matrices $$U_i = W_1\text{diag}(\sigma_1, \dots, \sigma_{i-1}, 0, \sigma_{i+1}, \dots, \sigma_n)W_2$$ where the $i$th singular value has been set to zero. Each $U_i \colon \Bbb C^n \to \Bbb C^m$ has a one-dimensional kernel and thus the number of critical hyperplanes equals $n$. This means that the EDdegree of the set of configurations lying on a hyperplane in $\Bbb C^n$ equals $n$ if $m\geq n$. The kernel elements of each $U_i$ are the \textit{principal components} of $p$ in the sense of \textit{principal component analysis (PCA)} \cite{mlbook}. 
%
%
%
%
\end{example}

\begin{remark}
	It is possible to replace the Euclidean distance function with a generic positive definite quadratic form, it is then called the \textit{generic} EDdegree. In the next section we will see what happens when we replace the Euclidean distance function with a pull-back of $d_E$ along a projection map, which results in a semi-definite quadratic form on the domain. 
\end{remark}

\section{The algebraic complexity of the Hypothesis variety}\label{sec:3}

In this section we define the optimization problem we want to analyze and the complexity measure used to study it, namely the algebraic complexity. The optimization problem is associated to a variety, called the hypothesis variety. We develop the tools we use to compute the algebraic complexity of this variety. 

Let $f_\beta \colon \Bbb C^n \to \Bbb C^r$ be a system of polynomials whose coefficients are polynomials in  $\beta\in \Bbb C^k$, and let $p=(p_1, \dots, p_m)\in \Bbb C^{mn}$ be a point configuration in $\Bbb C^n$. Let $V_\beta \subseteq \Bbb C^{n}$ denote the zero locus of the polynomial system $f_\beta$. Then $f_\beta$ define a class of varieties in $\Bbb C^n$, in fact it's a variety in $\Bbb C^n \times \Bbb C^k$. Our goal is to analyse the complexity of fitting a variety coming from the class $f_\beta$ to the point configuration $p$, in the sense of the following optimization problem:
\begin{equation}\label{eq:prob}
\begin{aligned}
&\min_{x\in \Bbb C^{mn}}&& \ \ \  \sum_{i=1}^m d_E(x_i, p_i)\\
&\ \ \ \ \text{s.t}&& \exists\, \beta\in \Bbb C^k \text{ s.t } f_\beta(x_i) = 0, \, \text{for all } 1\leq i\leq m
\end{aligned}
\end{equation}
The above objective function is the squared Euclidean distance function in the configuration space $\Bbb C^{mn}$. The optimization problem finds the closest configuration $x\in \Bbb C^{mn}$ to $p$ such that there is a variety $V_\beta \subset \Bbb C^n$ passing through each $x_i$. One may view the problem as being handed noisy samples $p$, sampled from a variety coming from the class $f_\beta$, and the goal is to recover the true values of $p$, which is done by finding the smallest perturbation of $p$ that lies on such a variety. Consider for instance the case of Example \ref{ex:hyp}, where $f_\beta$ is the class of hyperplanes in $\Bbb C^n$. Finding the minimal perturbation of $p$ is resolved by the Eckart-Young theorem, and computed using the SVD. The  singular values correspond to critical values of the objective function in Problem (\ref{eq:prob}), under the assumption that $p$ is centered around the origin. 
\begin{example}
	Fix a configuration of four points in the plane $\Bbb R^2$ centered around the origin, and let  $f_\beta = \beta_1 x_1 + \beta_2 x_2 + \alpha_1\beta_1 + \alpha_2\beta_2 + \alpha_3$ where $\alpha_1, \alpha_2, \alpha_3 \in \Bbb R$ are chosen randomly and $\beta \in \Bbb C^2$. We choose the $\alpha$'s in order to \textit{dehomogenize} the equation. The following figures illustrate the two lines passing through the two critical configurations of Problem (\ref{eq:prob}), which are both real and computed using the SVD:
	\begin{center}
		\begin{figure}[H]
		\begin{tabular}{llll}
			\includegraphics[width=0.3\linewidth]{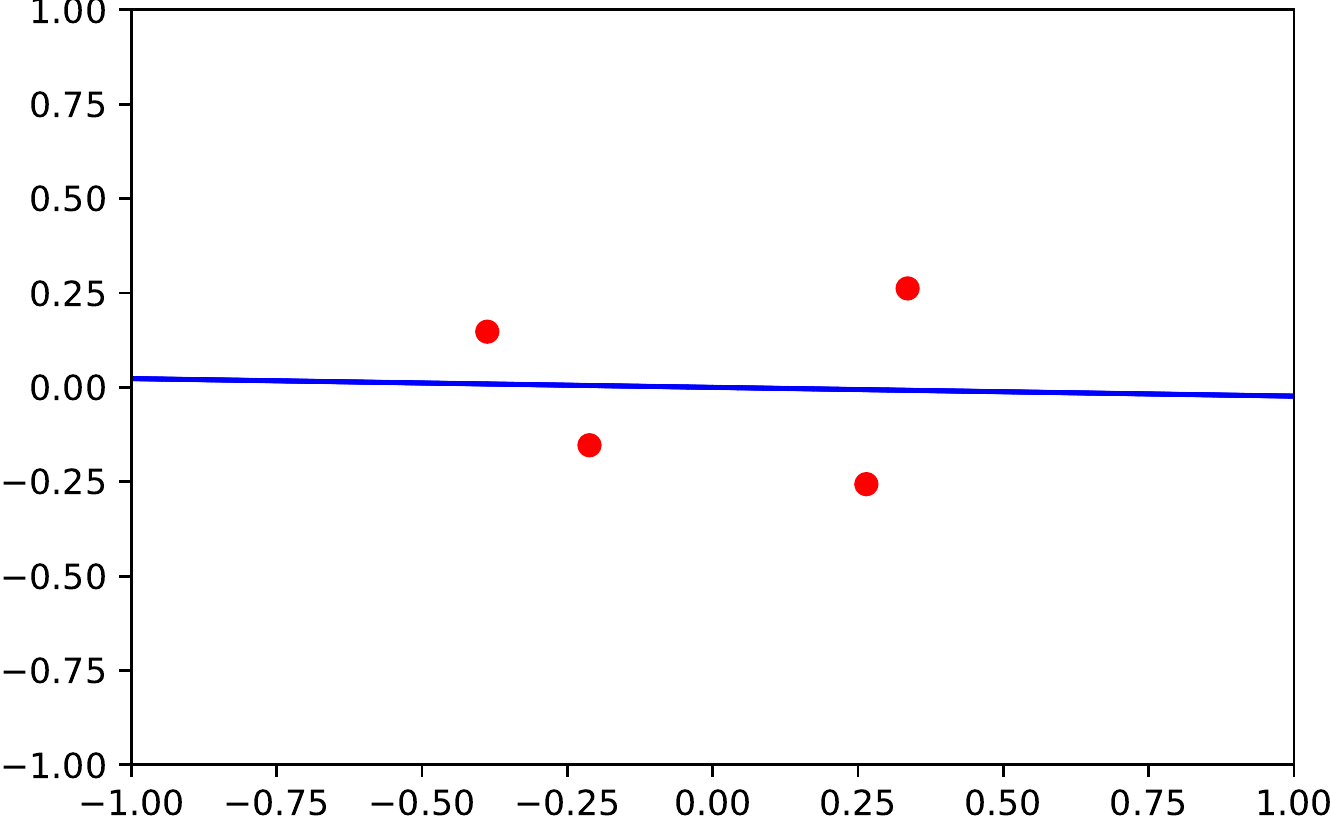}
			&
			\includegraphics[width=0.3\linewidth]{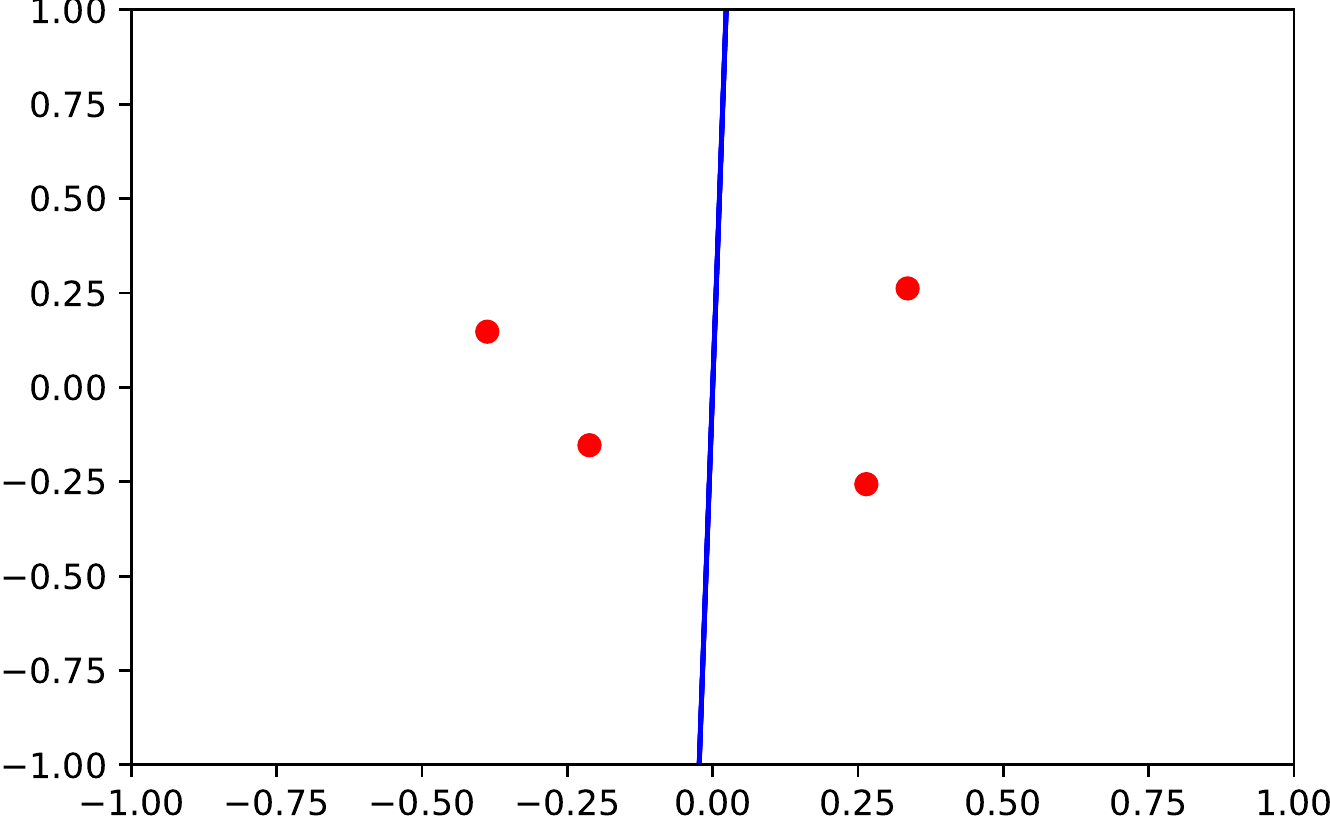}
		\end{tabular}
	\caption{The two critical lines to the point configuration.}
	\end{figure}
	\end{center}
As noted in Example \ref{ex:hyp} the two critical lines correspond to the principal components of the point configuration $p$. 
\end{example}
\begin{example}
	In Example \ref{ex:circle} we have the same configuration as in the previous example but we let  $f_\beta = (x_1 - \beta_1)^2 + (x_2-\beta_2)^2 - \beta_3$ and $\beta \in \Bbb C^3$. We then get critical circles as shown in the example.
\end{example}
The optimization problem (\ref{eq:prob}) is in general non-linear and has many local minima/maxima and saddle points. Our goal is to develop a complexity measure to study the complexity of Problem (\ref{eq:prob}) as $m$ and $n$ grows. We will study the complexity by considering the Euclidean Distance Degree of a certain variety, with respect to the point configuration $p$. The variety we will consider is the zero locus of the optimization problem (\ref{eq:prob}), by which we mean the set of configurations $p$ such that the global minimum of Problem (\ref{eq:prob}) is zero. This is the image of a variety under a projection whose closure the \textit{hypothesis variety}, $\HH_{m, n}$.
\begin{Def}\label{def:hypvar}
	Consider the incidence variety:
	$$C_{m, n} := \{ (x, \beta)\in \Bbb C^{mn}\times \Bbb C^k \mid f_\beta(x_i) = 0, \, 1\leq i\leq m \} \subseteq \Bbb C^{mn+k}$$  
	Define the \textbf{hypothesis variety}, denoted $\HH_{m, n}$, to be the algebraic closure of the image of $C_{m, n}$ under the projection $\pi\colon \Bbb C^{mn+k}\to \Bbb C^{mn}$ onto the first $mn$ coordinates.
	
Note that for a generic $x\in \HH_{m, n}$ there exists a $\beta\in \Bbb C^k$ such that $f_\beta(x_i) = 0$ for all $x_i$ in $x$. It is clear that for any $p$ in the zero locus of Problem (\ref{eq:prob}) it holds that $p\in \HH_{m, n}$. It follows from the above definition that $\HH_{m, n}$ is the algebraic closure of the zero locus of Problem (\ref{eq:prob}).

The algebraic complexity of finding the optimal solution of Problem (\ref{eq:prob}) may be characterized by the algebraic complexity of writing down the polynomial defining its solutions. The degree of this polynomial is the EDdegree of $\HH_{m, n}$. We refer to the function 
$$EDdegree(\HH_{-, -})\colon \Bbb N \times \Bbb N \to \Bbb N$$
as the \textbf{algebraic complexity} of $\HH_{m, n}$. 
\end{Def}
Computing the EDdegree of $\HH_{m, n}$ is however not straightforward since it is defined as the closure of the image of $C_{m, n}$ under the projection $\pi$. The defining equations of $\HH_{m, n}$ can be computed elimination ideals of $C_{m, n}$.  This computation is very costly since the result is a Gröbner basis for $\HH_{m, n}$, which is known to have doubly exponential computational complexity in the number of variables in the worst case. To remedy this we make the following definition.
 \begin{Def}
	Let $\beta EDdegree(C_{m, n})$ denote the EDdegree of $C_{m, n}$ using the Euclidean distance in $\Bbb C^{mn}$ pulled back to $\Bbb C^{mn+k}$ along $\pi$. This means that the distance between $p, q \in \Bbb C^{mn+k}$ is given by $d_E(\pi(p), \pi(q))$. Note that $d_E(\pi(p), \pi(q))$ may be zero even if $p\neq q$ and thus the pulled back distance is a pseudometric. Let $p\in \Bbb C^{mn}$, then the critical ideal is in this case defined as follows:
	\begin{equation}\label{eq:betaedd}
	\Bigg( I_{C_{m, n}} + \Bigg\langle (c+1)\times (c+1)\text{-minors of } \begin{pmatrix}
	(p, 0)-(x, 0)\\
	J(C_{m, n})
	\end{pmatrix} \Bigg\rangle \Bigg) \colon ((I_{C_{m, n}})_{sing})^{\infty}
	\end{equation}
	where $c$ is the codimension of $C_{m, n}$ in $\Bbb C^{mn+k}$ and  $I_{C_{m, n}}$ is the ideal of $C_{m, n}$ generated by its defining equations as given by Definition \ref{def:hypvar}. Finally, $J(C_{m, n})$ denotes the Jacobian matrix of $I_{C_{m, n}}$.  
\end{Def}
We might expect the EDdegree of $\HH_{m, n}$ to equal $\beta EDdegree(C_{m, n})$ but this is not the case, as shown by the following example.
\begin{example}
	Consider configurations of three points in the plane $\Bbb C^2$ and let  $\beta\in  \Bbb C^3$ and  $f_\beta = (x_1 - \beta_1)^2 + (x_2-\beta_2)^2 - \beta_3$. Then $\HH_{3, 2} = \Bbb C^6$ since any (distinct) three points in the plane have a unique circle passing through them, so $EDdegree(\HH_{3, 2})=1$. However, $\beta EDdegree(C_{3, 2})=4$.  Its critical points consists of the unique circle passing through the points and 3 additional circles illustrated by the following figures:
	\begin{center}
		\begin{tabular}{lll}
			\includegraphics[width=0.30\linewidth]{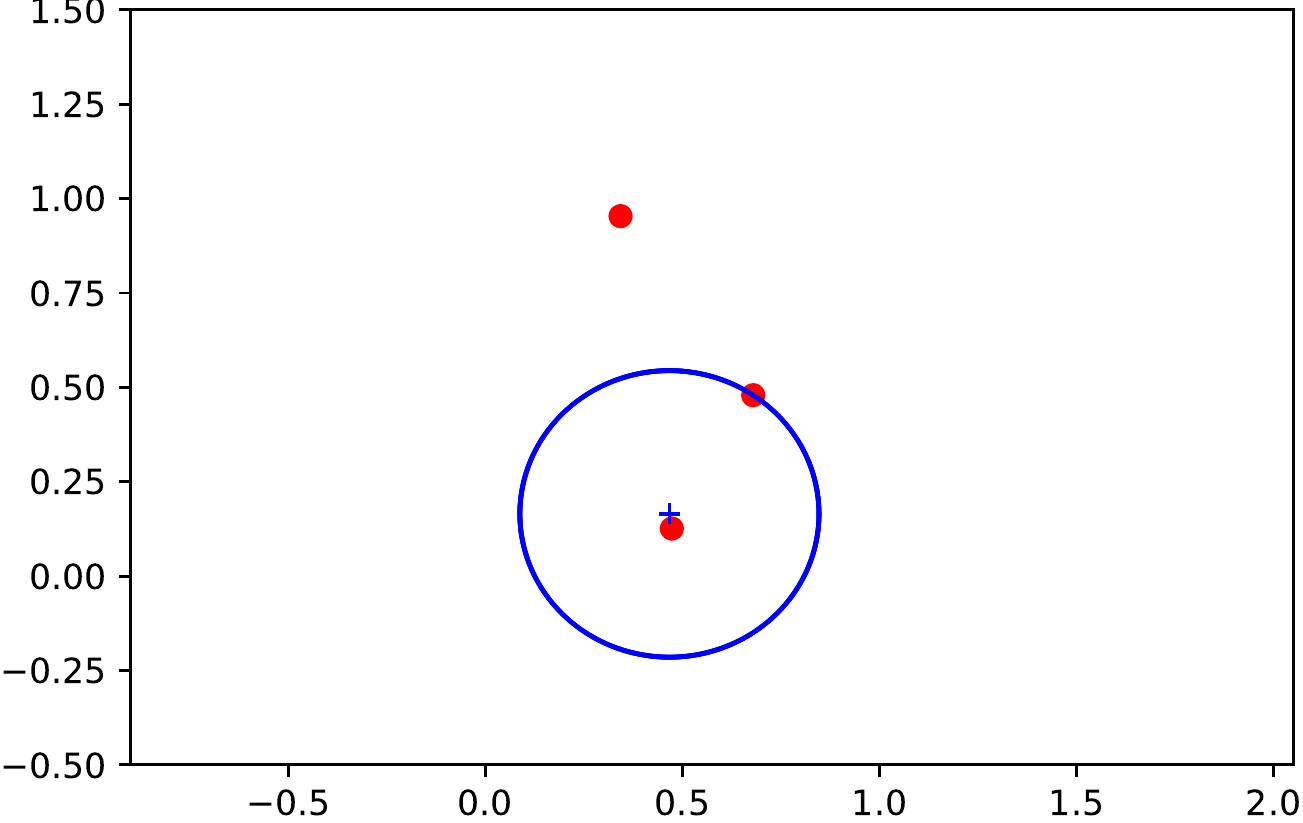}
			&
			\includegraphics[width=0.30\linewidth]{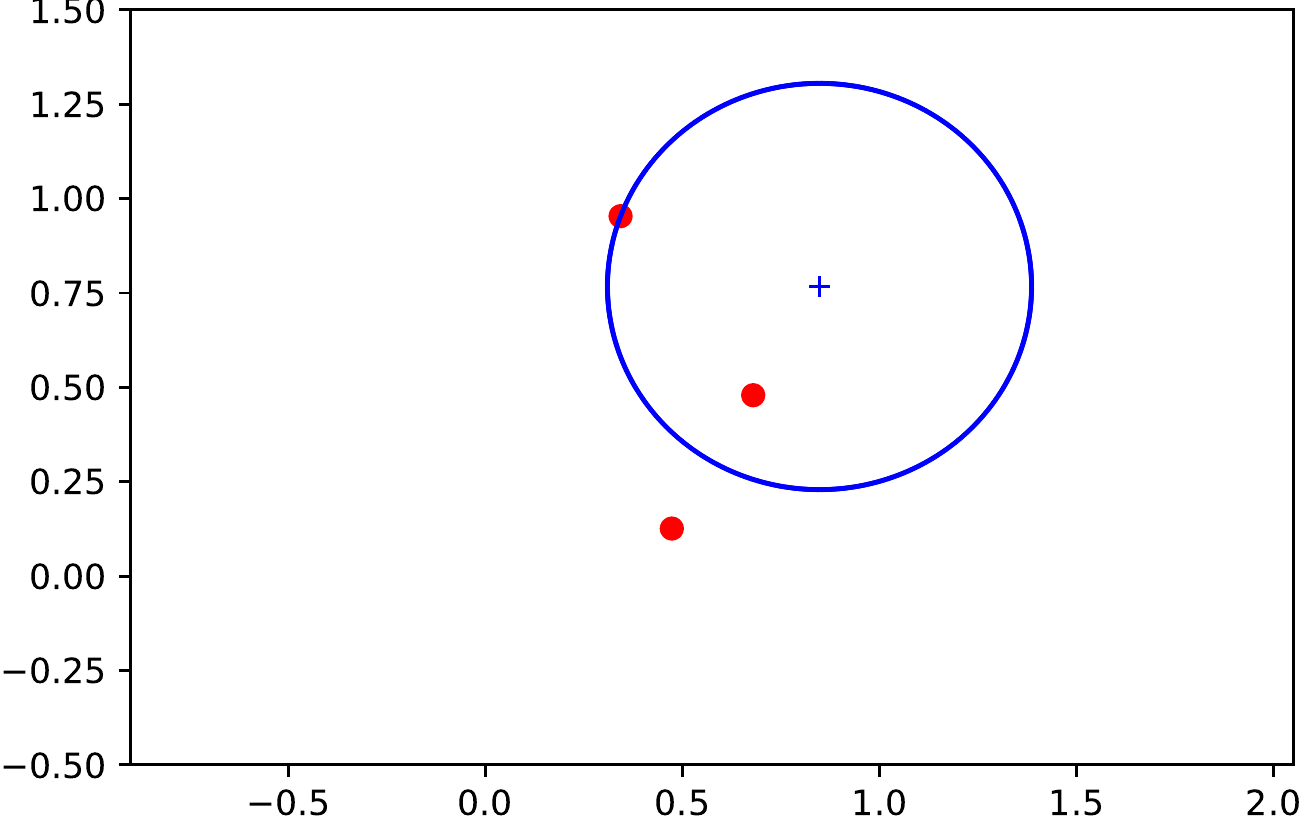}&	\includegraphics[width=0.30\linewidth]{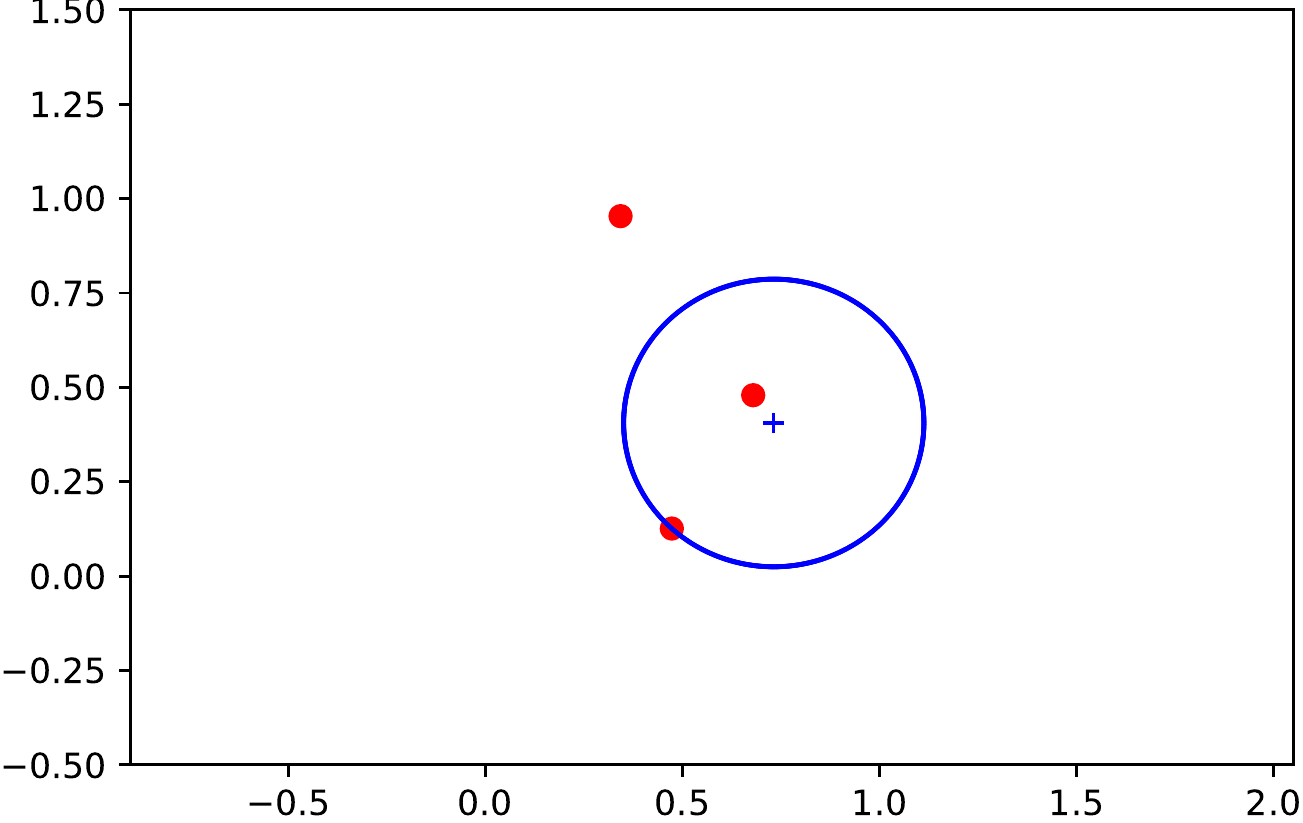}
		\end{tabular}
	\end{center}
	Note that in each critical configuration above, exactly two points are mapped to the same point on the circle. This would normally yield a singular point on $\HH_{m, n}$, but not in this case.
\end{example}
We are interested in the generic behavior of $C_{m, n}$ in the sense of generic choices of $\beta$ and generic fibers of $\pi$. What is interesting is thus the dominating components of $C_{m, n}$ under the projection $\tau\colon \Bbb C^{mn+k} \to \Bbb C^k$ that takes $(x, \beta) \to \beta$ and the dominating components of $C_{m, n}$ under $\pi$. We will throughout this section assume that there is a component dominating both $\pi$ and $\tau$. For convenience we assume that $C_{m, n}$ only has one component and is \textit{irreducible}. Note that if $f_\beta$ is irreducible for a generic $\beta\in\Bbb C^k$, then $C_{m, n}$ is irreducible, and thus it is reasonable to assume that $C_{m, n}$ is irreducible. 

The following result shows that $EDdegree(\HH_{m, n})$ is bounded from above by $\beta EDdegree(C_{m, n})$ and that the critical points of $\HH_{m, n}$ are a subset of the critical points of $C_{m, n}$. 
\begin{thm}\label{thm:ineq}
	$$EDdegree(\HH_{m, n}) \leq \beta EDdegree(C_{m, n})$$
	and suppose that $x\in \HH_{m, n}$ is critical to $\pi(p)$ for some $p\in C_{m, n}$, then any element in the fibre $\pi^{-1}(x)$ is critical to $C_{m, n}$ with respect to $p$. 
\end{thm}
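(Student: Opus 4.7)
My plan is to establish the second assertion first (a pointwise lifting of critical points along the fibres of $\pi$), and then to obtain the inequality as a counting corollary. I interpret the fibre $\pi^{-1}(x)$ as $\pi^{-1}(x)\cap C_{m,n}$, since the statement concerns elements of $C_{m,n}$ critical with respect to $p$.

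The key input is the tangent-space pushforward: for $q \in C_{m,n}$ smooth, the differential $d\pi_q\colon T_q C_{m,n} \to T_{\pi(q)} \HH_{m,n}$ has image inside $T_{\pi(q)} \HH_{m,n}$, simply because $\pi(C_{m,n}) \subseteq \HH_{m,n}$ and $\pi$ is the linear coordinate projection $(v, w) \mapsto v$. With this in hand, suppose $x \in \HH_{m,n}$ is critical for $\pi(p)$ with $p \in C_{m,n}$, so $\pi(p) - x$ is orthogonal to $T_x \HH_{m,n}$. Let $q = (x, \beta) \in \pi^{-1}(x)\cap C_{m,n}$. The pulled-back pseudometric $\tilde d(p, q) = d_E(\pi(p), \pi(q))$, as a function of $q$, has gradient $\bigl(2(\pi(q) - \pi(p)),\,0\bigr) \in \Bbb C^{mn+k}$, so criticality of $q$ amounts to $\bigl(\pi(p) - x,\,0\bigr)$ being orthogonal to $T_q C_{m,n}$. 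For any $(v, w) \in T_q C_{m,n}$,
$$\langle (\pi(p) - x, 0), (v, w)\rangle = \langle \pi(p) - x, v\rangle,$$
and the tangent-space inclusion places $v = d\pi_q(v, w)$ inside $T_x \HH_{m,n}$. The criticality of $x$ in $\HH_{m,n}$ then forces this pairing to vanish, proving $q$ is critical in $C_{m,n}$ relative to $p$ in the pseudometric sense.

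For the inequality I would compute both EDdegrees with respect to a generic data point $u \in \Bbb C^{mn}$, using the lift $(u, 0) \in \Bbb C^{mn+k}$ for the $\beta EDdegree$ (legitimate because the pseudometric ignores the last $k$ coordinates). Each of the $EDdegree(\HH_{m,n})$ critical points $x$ is smooth in $\HH_{m,n}$, and dominance of $\pi\colon C_{m,n} \to \HH_{m,n}$ (which follows from the irreducibility assumption stated before the theorem) guarantees that $\pi^{-1}(x)\cap C_{m,n}$ is non-empty for generic $u$. The lifting just established then produces at least one critical point of $C_{m,n}$ in each such fibre, and distinct critical $x$ produce disjoint fibres, so counting yields $EDdegree(\HH_{m,n}) \le \beta EDdegree(C_{m,n})$.

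The most delicate step is checking that smoothness is preserved where needed: the tangent-space inclusion requires $q$ to lie outside the singular locus of $C_{m,n}$, and the criticality hypothesis on $x$ presumes $x$ in the smooth locus of $\HH_{m,n}$. I expect this to cause no real trouble because genericity of $u \in \Bbb C^{mn}$ moves every critical $x$ into the smooth locus of $\HH_{m,n}$, and any fibre element $q = (x,\beta) \in C_{m,n}$ can be chosen to lie in the smooth locus of $C_{m,n}$ since the singular locus is a proper subvariety of $C_{m,n}$ cut out by the vanishing of appropriate Jacobian minors. A secondary concern is that $\beta EDdegree(C_{m,n})$ be finite — this is the content of the zero-dimensionality of the critical ideal (\ref{eq:betaedd}) under the standing irreducibility hypothesis.
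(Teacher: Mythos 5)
Your argument is correct and reaches the same conclusions as the paper, but by a genuinely more intrinsic route. The paper's proof works through the conormal picture: it invokes the elimination theorem for Gr\"obner bases to obtain an explicit generating set of $I_{\HH_{m,n}}$ as a subset of a Gr\"obner basis of $I_{C_{m,n}}$, writes out the Jacobian of $C_{m,n}$ in block form, and observes that the block $\begin{pmatrix}0 & J(\HH_{m,n})\end{pmatrix}$ appears inside $J(C_{m,n})$, so the row span (conormal space) of $\HH_{m,n}$ embedded via zero-padding sits inside the row span of $C_{m,n}$. Your proof instead works on the tangent side: since $\pi$ maps $C_{m,n}$ into $\HH_{m,n}$, the pushforward $d\pi_q$ lands in $T_{\pi(q)}\HH_{m,n}$, and the orthogonality of $\bigl(\pi(p)-x,0\bigr)$ against $T_q C_{m,n}$ then reduces to the orthogonality of $\pi(p)-x$ against $T_x\HH_{m,n}$. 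These are dual formulations of the same containment, but yours needs only the bare set-theoretic inclusion $\pi(C_{m,n})\subseteq\HH_{m,n}$ and standard smoothness facts, whereas the paper's invokes Gr\"obner-basis elimination (a nontrivial computational ingredient not otherwise used in the proof). Both proofs then count by lifting: you invoke dominance of $\pi$ to get non-empty fibres and genericity to move critical points and their lifts into smooth loci; the paper argues analogously, using irreducibility of $C_{m,n}$ to rule out a generically singular fibre and constructibility of $\pi(C_{m,n})$ to rule out critical points of $\HH_{m,n}$ that lie outside the image.

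Two small imprecisions in your write-up, neither fatal. First, the paper handles the case $\beta EDdegree(C_{m,n})=\infty$ up front (the inequality then holds vacuously, via \cite[Lemma 2.1]{edd} giving finiteness of the left-hand side); you do not need to worry about finiteness of the right-hand side at all, because your counting argument is a one-sided lower bound and goes through whether or not $\beta EDdegree(C_{m,n})$ is finite. Second, irreducibility of $C_{m,n}$ alone does \emph{not} guarantee finiteness of $\beta EDdegree(C_{m,n})$ — Remark \ref{rem:finite} gives a counterexample when $f_\beta$ is homogeneous in $\beta$ — so your closing sentence attributes too much to the irreducibility hypothesis; the genuine source of finiteness (when it holds) is generic finiteness of $\pi$, as in Proposition \ref{prop:finite}.
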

\begin{proof}
	First note that by \cite[Lemma 2.1]{edd} it follows that $EDdegree(\HH_{m, n})$ is finite and thus if  $\beta EDdegree(C_{m, n})$ is infinite the inequality still holds. We may compute the image of the projection $\pi$ by computing a Gröbner basis $\{g_1, \dots, g_s\}$ of $C_{m, n}$ with an appropriate monomial ordering. A subset of this basis $\{ g_1, \dots, g_t \}$ describes the image and is a Gröbner basis for $\HH_{m, n}$. When we compute $\beta$EDdegree of $C_{m, n}$ we consider the following Jacobian matrix:
	$$\begin{bmatrix}
	0 & x_1-p_1 & \dots & x_m-p_m\\
	0 & \nabla_{x_1} g_1(x) & \dots &  \nabla_{x_m} g_1(x) \\
	\vdots & \vdots  & \dots &  \vdots  \\
	0 & \nabla_{x_1} g_t(x) & \dots &  \nabla_{x_m} g_t(x) \\
	\nabla_{\beta} g_{t+1}(x) & \nabla_{x_1} g_{t+1}(x) & \dots &  \nabla_{x_m} g_{t+1}(x) \\
	\vdots & \vdots  & \dots &  \vdots  \\
	\nabla_{\beta} g_{s}(x) & \nabla_{x_1} g_{s}(x) & \dots &  \nabla_{x_m} g_{s}(x) 
	\end{bmatrix} = \begin{bmatrix}
	0 & x-p\\
	0 & J(\HH_{m, n}) \\
	\nabla_{\beta} g_{t+1}(x) & \nabla_{x} g_{t+1}(x) \\
	\vdots  &  \vdots  \\
	\nabla_{\beta} g_{s}(x) & \nabla_{x} g_{s}(x)
	\end{bmatrix}	$$
	Note that the upper right block is the Jacobian matrix in the critical ideal of $\HH_{m, n}$. Consequently, any critical point of $\HH_{m, n}$ is also a critical point of $C_{m, n}$. It remains to prove two things: the first is that a critical point of $\HH_{m, n}$ does not yield a singular point of $C_{m, n}$. Generically, this would only happen if $C_{m, n}$ has a singular component but this cannot be since $C_{m, n}$ is assumed to be irreducible. The second thing is if a critical point of $\HH_{m, n}$ is not in the image  $\pi(C_{m, n})$, but generically this does not happen either since the image $\pi(C_{m, n})$ is constructible and thus contains a Zariski open subset of its closure $\HH_{m, n}$.
\end{proof}
\begin{remark}\label{rem:finite}
	Note that if $f_\beta$ is homogeneous in $\beta$, so that $f_{\lambda \beta}(x) = \lambda^s f_{\beta}(x)$ for some $s\in \Bbb N$. Then $\beta EDdegree(C_{m, n})$ is not finite since for any critical point $(x, \beta)$ we have that $(x, \lambda \beta)$ is also a critical point. Thus we have to assume that the map $\pi\colon C_{m, n} \to \HH_{m, n}$ is generically finite.
\end{remark}

Consider the variety of pairs $(x, p) \in \HH_{m, n} \times \Bbb C^{mn}$, denoted by $\mathcal{E}_{\HH}$, where $(x, p)$ is such that $x$ is a critical point of $\HH_{m, n}$ with respect to $p$. This is called the \textit{ED-correspondence} in \cite{edd}. Let $\tau_1 \colon \HH_{m, n} \times \Bbb C^{mn} \to \HH_{m, n}$ denote the projection onto the first component. Then $\text{dim}(\mathcal{E}_{\HH}) = \text{dim}(\HH_{m, n}) + \text{dim}(\tau_1^{-1}(x)) = mn$, where $x\in \HH_{m, n}$ is generic. If $\HH_{m, n}$ is irreducible, then $\mathcal{E}_{\HH}$ is an irreducible variety of dimension $mn$. This means that the projection $\tau_2 \colon \HH_{m, n} \times \Bbb C^{mn} \to  \Bbb C^{mn}$ has finite fibers, which is the same as saying that the critical ideal of $\HH_{m, n}$ with respect to $p$ is zero-dimensional (see the proof of \cite[Lemma 2.1]{edd}).  

We can construct an analogous object for $C_{m, n}$. Consider the variety $$\mathcal{E}_{C} := \{ (x, p) \in C_{m, n}\times \Bbb C^{mn} \mid (\pi(x)-p, 0)\in N_{x}(C_{m, n})\subseteq \Bbb C^{mn+k} \}$$
where $N_{x}(C_{m, n})$ denotes the normal space of $C_{m, n}$ at $x$. Assume that $\pi$ is generically finite, then $\text{dim}(C_{m, n}) =\text{dim}(\HH_{m, n})$. Let $\sigma_1\colon C_{m, n}\times \Bbb C^{mn} \to  C_{m, n}$ denote the projection onto the first component. From the proof of Theorem \ref{thm:ineq} we note that $\text{dim}(\sigma_1^{-1}(p)) \geq \text{dim}(\tau_1^{-1}(p))$. This then means that if $\pi$ is generically finite and $\text{dim}(\sigma_1^{-1}(p)) = \text{dim}(\tau_1^{-1}(p))$, then $\text{dim}(\mathcal{E}_{C}) = \text{dim}(C_{m, n}) + \text{dim}(\sigma_1^{-1}(p)) = mn$. Thus $\mathcal{E}_{C}$ is an irreducible variety of dimension $mn$, which means that the projection $\sigma_2$ onto the second component has finite fibers and thus that $\beta EDdegree(C_{m, n})$ is finite.

Note that if $\text{dim}(\sigma_1^{-1}(p)) > \text{dim}(\tau_1^{-1}(p))$ for a generic $p\in C_{m, n}$. Then this implies that $\text{rank} \nabla_x C_{m, n}(p) > \text{codim}(\HH_{m, n})$, which for dimensionality reasons forces $\text{rank} \nabla_\beta C_{m, n}(p) < k$. This means that the condition $\text{rank} \nabla_\beta C_{m, n}(p, \beta) = k$ is equivalent to $\text{dim}(\sigma_1^{-1}(p)) = \text{dim}(\tau_1^{-1}(p))$. We will now show that this condition is also equivalent to $\pi$ being generically finite.
%
%

\begin{lemma}\label{lem:finite}
	$\pi$ is generically finite if and only if $\text{rank}(\nabla_\beta C_{m, n}(x, \beta)) = k$ for a generic $(x, \beta)\in  C_{m, n}$.
\end{lemma}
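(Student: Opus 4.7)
The plan is to reduce the equivalence to a computation of the differential of $\pi$ at a generic smooth point of $C_{m, n}$ and then invoke the fibre dimension theorem. Since $C_{m, n}$ is assumed irreducible, so is its image closure $\HH_{m, n}$, and by the fibre dimension theorem $\pi$ is generically finite if and only if $\text{dim}(C_{m, n}) = \text{dim}(\HH_{m, n})$, equivalently the generic fibre of $\pi\colon C_{m, n}\to \HH_{m, n}$ is zero-dimensional.

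First I would pick a point $(x, \beta)$ in a nonempty Zariski open subset of $C_{m, n}$ where $(x, \beta)$ is a smooth point of $C_{m, n}$ and $\pi(x, \beta)$ is a generic point of $\HH_{m, n}$; such a subset exists by irreducibility together with generic smoothness in characteristic zero. At such a point the tangent space $T_{(x, \beta)} C_{m, n}$ is the kernel of the full Jacobian $[\nabla_x C_{m, n}(x, \beta) \mid \nabla_\beta C_{m, n}(x, \beta)]$. The differential $d\pi$ restricts to this tangent space as the linear projection $(v, w)\mapsto v$, so its kernel consists exactly of the pairs $(0, w)$ with $w\in \ker \nabla_\beta C_{m, n}(x, \beta)$. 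On the other hand, this kernel is precisely the tangent space at $(x, \beta)$ to the scheme-theoretic fibre $\pi^{-1}(\pi(x, \beta)) = \{\beta'\in \Bbb C^k \mid C_{m, n}(x, \beta') = 0\}$.

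Combining these two identifications, the dimension of the generic fibre of $\pi$ at the smooth point $(x, \beta)$ equals $\text{dim}(\ker \nabla_\beta C_{m, n}(x, \beta)) = k - \text{rank}(\nabla_\beta C_{m, n}(x, \beta))$. Consequently the generic fibre of $\pi$ is zero-dimensional if and only if $\text{rank}(\nabla_\beta C_{m, n}(x, \beta)) = k$ for a generic $(x, \beta)\in C_{m, n}$, which together with the initial reduction gives the desired equivalence in both directions.

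The main obstacle is the bookkeeping around smoothness and generic behaviour: I need to argue that at a generic $(x, \beta)\in C_{m, n}$ the point is smooth both on $C_{m, n}$ and on the scheme-theoretic fibre $\pi^{-1}(\pi(x, \beta))$, and that $\pi(x, \beta)$ is itself generic in $\HH_{m, n}$ so that the fibre dimension at $(x, \beta)$ equals the generic fibre dimension of $\pi$. All of these follow from irreducibility of $C_{m, n}$, generic smoothness for dominant morphisms in characteristic zero, and upper semicontinuity of fibre dimensions, but they should be invoked explicitly in the actual write-up to avoid gaps between tangent-space dimension and fibre dimension.
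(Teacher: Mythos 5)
Your argument is correct and follows the same underlying idea as the paper's own proof: both identify generic finiteness of $\pi$ with the generic fibre being zero-dimensional, and both then equate zero-dimensionality of the fibre $\{\beta' : f_{\beta'}(x_i)=0\ \forall i\}$ with full rank of $\nabla_\beta C_{m,n}$. The difference is one of rigor rather than strategy. The paper's proof asserts the equivalence ``$\text{rank}\,\nabla_\beta = k$ iff the fibre is zero-dimensional, because $C_{m,n}$ is irreducible'' without explaining what work irreducibility is actually doing; taken literally, rank less than $k$ at a single point does not imply positive-dimensional fibre, nor does a zero-dimensional fibre force full rank (a non-reduced isolated solution is a counterexample at the level of a single point). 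Your version fills exactly those gaps: you compute $\ker d\pi$ at a smooth point of $C_{m,n}$, identify it with the tangent space of the scheme-theoretic fibre, and then invoke generic smoothness in characteristic zero plus the fibre dimension theorem to pass from tangent-space dimension to actual fibre dimension at a \emph{generic} point, which is what makes the two directions of the equivalence legitimate. The one thing I would insist you include in the final write-up, and which you correctly flag, is the generic-smoothness step ensuring that the fibre is reduced and smooth at a generic $(x,\beta)$; without it the equality $\dim\ker\nabla_\beta = \dim(\text{fibre})$ could fail, and that is precisely the subtlety the paper's proof glosses over.
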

\begin{proof}
	Suppose we fix a generic $x\in \HH_{m, n}$ and consider the system $f_\beta(x_1) = \dots = f_\beta(x_m) = 0$ of $m$ equations in $k$ variables. The condition that $\text{rank}(\nabla_\beta C_{m, n}(x, \beta)) = k$ is equivalent to saying that the associated variety is zero-dimensional, since we assumed that $C_{m, n}$ is irreducible. Conversely, if $\text{rank}(\nabla_\beta C_{m, n}(x, \beta)) < k$ the system does not have full rank which means that the variety describing the solutions is not zero-dimensional, which implies that $\pi$ is not generically finite. If $\pi$ is generically finite then the variety describing the solutions to the system is zero-dimensional, which implies that the system has full rank and thus that $\text{rank}(\nabla_\beta C_{m, n}(x, \beta)) = k$.
%
%
	%
\end{proof}

\begin{prop}\label{prop:finite}
	The critical ideal corresponding to $\beta EDdegree(C_{m, n})$ is zero-dimensional for a generic $p\in \Bbb C^{mn}$ if and only if $\pi$ is generically finite.
\end{prop}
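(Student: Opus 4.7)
The plan is to translate both sides of the equivalence into a single condition: $\dim\mathcal{E}_C=mn$. Since $C_{m,n}$ is assumed irreducible, $\mathcal{E}_C$ is irreducible, and the critical ideal of $\beta EDdegree(C_{m,n})$ cuts out the fiber $\sigma_2^{-1}(p)$ for $p\in\Bbb C^{mn}$. Thus the critical ideal is zero-dimensional for generic $p$ if and only if the dominant projection $\sigma_2:\mathcal{E}_C\to\Bbb C^{mn}$ has generically finite fibers, which by irreducibility is the same as $\dim\mathcal{E}_C=mn$. My job is then to show that $\dim\mathcal{E}_C=mn$ if and only if $\pi$ is generically finite.

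For the $(\Leftarrow)$ direction I would assume $\pi$ is generically finite, so in particular $\dim C_{m,n}=\dim\HH_{m,n}$. By Lemma \ref{lem:finite}, $\nabla_\beta C_{m,n}$ has generic rank $k$, and the dimension computation recorded just before that lemma then yields $\dim\sigma_1^{-1}(x)=\dim\tau_1^{-1}(\pi(x))$ for a generic $x\in C_{m,n}$. Summing base and fiber dimensions along the surjection $\sigma_1$ and comparing with the same sum for $\tau_1$ gives
\[
\dim\mathcal{E}_C=\dim C_{m,n}+\dim\sigma_1^{-1}(x)=\dim\HH_{m,n}+\dim\tau_1^{-1}(\pi(x))=\dim\mathcal{E}_\HH=mn,
\]
where the last equality is the irreducibility-of-$\mathcal{E}_\HH$ computation recalled earlier in the section.

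For the $(\Rightarrow)$ direction I would argue by contrapositive. Suppose $\pi$ is not generically finite and put $d:=\dim C_{m,n}-\dim\HH_{m,n}\geq 1$, the dimension of a generic fiber $\pi^{-1}(x_0)$. By the second statement of Theorem \ref{thm:ineq}, whenever $(x_0,p_0)\in\mathcal{E}_\HH$, every point of $\pi^{-1}(x_0)\times\{p_0\}$ lies in $\mathcal{E}_C$. This produces a morphism from the fiber product $\mathcal{E}_\HH\times_{\HH_{m,n}}C_{m,n}$, which is irreducible of dimension $mn+d$, into $\mathcal{E}_C$, and this morphism is generically injective because different elements of $\pi^{-1}(x_0)$ give different points in $C_{m,n}$. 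Consequently $\dim\mathcal{E}_C\geq mn+d>mn$, so $\sigma_2$ fails to have generic zero-dimensional fibers and the critical ideal is positive-dimensional.

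The one step that requires care is verifying in the $(\Rightarrow)$ direction that the extra $d$ dimensions from the fibers of $\pi$ really do enlarge $\mathcal{E}_C$ rather than being absorbed; this is precisely what the fiberwise statement of Theorem \ref{thm:ineq} is designed to supply, together with the running irreducibility hypothesis on $C_{m,n}$ so that $\mathcal{E}_C$ carries a well-defined single dimension.
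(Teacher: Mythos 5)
Your proof is correct and takes a genuinely different route from the paper's, though both arguments circle around the same invariant $\dim\mathcal{E}_C$. The paper proves the forward direction by arguing that zero-dimensionality of the critical ideal forces $\operatorname{rank}\nabla_\beta C_{m,n}=k$ (by deriving a contradiction with $\dim\mathcal{E}_C>mn$ otherwise) and then invoking Lemma~\ref{lem:finite}; you instead argue the contrapositive directly, constructing an injective morphism from the fiber product $\mathcal{E}_\HH\times_{\HH_{m,n}}C_{m,n}$ into $\mathcal{E}_C$ and using the fiber-dimension theorem to conclude $\dim\mathcal{E}_C\geq mn+d$. That step, supplied by the second part of Theorem~\ref{thm:ineq}, is a nice way to make precise what the paper leaves a bit muddled (the paper's first sentence in the forward direction appears to assume the conclusion before the real argument starts). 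For the converse direction, the paper works with the differential $d\pi$ directly, showing it is an isomorphism on tangent spaces so that normal directions to $C_{m,n}$ project to normal directions of $\HH_{m,n}$ and criticality transfers; you instead reuse the dimension bookkeeping of the paragraph preceding Lemma~\ref{lem:finite} to show $\dim\sigma_1^{-1}=\dim\tau_1^{-1}$ and hence $\dim\mathcal{E}_C=mn$. Your version is more uniform, in that both implications reduce to a single numerical condition; the paper's tangent-space argument for $(\Leftarrow)$ is more local and geometric but proves a slightly stronger pointwise statement (used later in Corollary~\ref{prop:cond}).

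Two points to be explicit about if you wanted to tighten this: (i) your claim that $\mathcal{E}_C$ is irreducible follows from $C_{m,n}$ irreducible is not automatic --- it needs the observation that $\sigma_1:\mathcal{E}_C\to C_{m,n}$ is generically an affine-space bundle, which the paper also glosses over --- but your argument only really needs the existence of a component of $\mathcal{E}_C$ of dimension $\geq mn+d$ that dominates $\Bbb C^{mn}$, and your fiber-product construction gives this directly; (ii) Theorem~\ref{thm:ineq} as stated writes ``for some $p\in C_{m,n}$,'' but its proof makes clear this should be $p\in\Bbb C^{mn}$, which is what your application requires.
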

\begin{proof}
	Suppose that $\beta EDdegree(C_{m, n})$ is zero-dimensional for a generic $p\in \Bbb C^{mn}$. By Theorem \ref{thm:ineq} this means that $\pi$ is generically finite and that $\text{dim}(C_{m, n}) =\text{dim}(\HH_{m, n})$. Now assume that $\text{rank}(\nabla_\beta C_{m, n}(p)) < k$. This forces $\text{rank} \nabla_x C_{m, n}(p) > m(n-1)+k$ and thus $\text{dim}(\sigma^{-1}(p)) > \text{dim}(\tau^{-1}(p))$. Now consider the projection $\rho\colon C_{m, n}\times \Bbb C^{mn} \to  \Bbb C^{mn}$ onto the second component. Since $\mathcal{E}_{C}$ is an irreducible variety of dimension $\geq mn$ it follows that the fibers of $\rho$ are not generically finite, which implies that the critical ideal of $C_{m, n}$ with respect to $p$ is not zero-dimensional. Thus it has to hold that $\text{rank}(\nabla_\beta C_{m, n}(p)) = k$ which by Lemma \ref{lem:finite} implies that $\pi$ is generically finite.
	
	Conversely, assume that $\pi$ is generically finite, by Lemma \ref{lem:finite} this implies rank$(\nabla_\beta C_{m, n}(x, \beta))= k$ for a generic $(x, \beta)\in \Bbb C^{mn}$. Then the differential $d\pi \colon TC_{m, n} \to T\HH_{m, n}$ is surjective at $(x, \beta)$. To see this, note that if $v\in T_{(x, \beta)}C_{m, n}$ and $d\pi(v)=0$, then the vector $v$ has only non-zero values for coordinates corresponding to $\beta$. The fact that $d\pi(x)$ is surjective implies that it is an isomorphsim, since dim$(C_{m, n}) = \text{dim}(\HH_{m, n})$, and thus any $v$ in the normal space of $C_{m, n}$ at $(x, \beta)$ is in the normal space of $\HH_{m, n}$ at $x$ under the projection $\pi$. The result then follows from the fact the the critical ideal of $\HH_{m, n}$ with respect to $x$ is zero-dimensional.
\end{proof}
\begin{cor}\label{prop:cond}
	Let $(x, \beta) \in \Bbb C^{mn+k}$ be a critical configuration of $C_{m, n}$ with respect to some $p\in \Bbb C^{mn+k}$. Then $x$ is a critical configuration of $\HH_{m, n}$ with respect to $\pi(p)$ if $\nabla_\beta C_{m, n}(x, \beta)$ has full rank.
\end{cor}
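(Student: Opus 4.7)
The plan is to localize the tangent/normal space argument from the proof of Proposition \ref{prop:finite}: there the full-rank condition was invoked at a generic point via Lemma \ref{lem:finite}; here we invoke it pointwise at $(x,\beta)$. Concretely, under the rank hypothesis I will show $d\pi|_{T_{(x,\beta)}C_{m,n}}$ is an isomorphism onto $T_x\HH_{m,n}$, and then read the critical condition for $\HH_{m,n}$ off the critical condition for $C_{m,n}$ by transporting tangent vectors.

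First I would unpack the hypothesis. By the definition of the critical ideal (\ref{eq:betaedd}), $(x,\beta)$ being critical for $C_{m,n}$ with respect to $p$ means that $(\pi(p)-x,0)\in\Bbb C^{mn+k}$ lies in the row span of $J(C_{m,n})(x,\beta)$, i.e.\ in the normal space of $C_{m,n}$ at $(x,\beta)$ with respect to the standard Euclidean pairing on $\Bbb C^{mn+k}$. Equivalently, for every tangent vector $v=(v_x,v_\beta)\in T_{(x,\beta)}C_{m,n}$,
$$\bigl\langle(\pi(p)-x,0),(v_x,v_\beta)\bigr\rangle \;=\; \langle \pi(p)-x,\,v_x\rangle \;=\; 0.$$
Next I would use the rank hypothesis to show $d\pi|_{T_{(x,\beta)}C_{m,n}}$ is injective: any $v\in\ker d\pi$ has the form $(0,v_\beta)$, and membership in $T_{(x,\beta)}C_{m,n}$ forces $\nabla_\beta C_{m,n}(x,\beta)\cdot v_\beta=0$, so $v_\beta=0$ by the full-rank assumption. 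Since the rank condition also implies generic finiteness of $\pi$ by Lemma \ref{lem:finite} (applied on the irreducible variety $C_{m,n}$), we have $\dim C_{m,n}=\dim\HH_{m,n}$, so the injective linear map $d\pi|_{T_{(x,\beta)}C_{m,n}}\to T_x\HH_{m,n}$ is in fact an isomorphism.

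To conclude, pick any $w\in T_x\HH_{m,n}$ and lift to $v=(w,v_\beta)\in T_{(x,\beta)}C_{m,n}$ via the isomorphism. The critical condition above yields $\langle \pi(p)-x,\,w\rangle=0$, showing that $\pi(p)-x$ is orthogonal to $T_x\HH_{m,n}$ and hence lies in the normal space of $\HH_{m,n}$ at $x$, which is exactly the definition of $x$ being a critical configuration of $\HH_{m,n}$ with respect to $\pi(p)$. The subtlest step is the dimension count used for surjectivity of $d\pi$: the corollary only gives a pointwise rank condition, so one must confirm that the pointwise rank implies the global equality $\dim C_{m,n}=\dim\HH_{m,n}$. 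This is indeed automatic from Lemma \ref{lem:finite} combined with the standing assumption that $C_{m,n}$ is irreducible. A related minor point is that $x$ must be a smooth point of $\HH_{m,n}$ for the normal-space interpretation of the critical ideal to be literal, but this is taken care of by the saturation by the singular locus in the definition of the critical ideal.
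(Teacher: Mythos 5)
Your proof is correct and follows essentially the same route as the paper's: the paper's one-line justification invokes the tangent-space argument from the proof of Proposition~\ref{prop:finite} (full rank of $\nabla_\beta C_{m,n}$ forces $d\pi$ to be injective on $T_{(x,\beta)}C_{m,n}$, and the dimension equality upgrades this to an isomorphism, so normality for $C_{m,n}$ transfers to normality for $\HH_{m,n}$), and you simply spell this out pointwise with the orthogonality characterization of the critical ideal and the lift of tangent vectors. Your flagged subtlety about needing the \emph{pointwise} rank hypothesis to yield the \emph{global} dimension count is resolved exactly as you say, by semicontinuity of rank plus irreducibility of $C_{m,n}$ and Lemma~\ref{lem:finite}.

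One small inaccuracy in your closing sentence: you claim that the requirement that $x$ be a smooth point of $\HH_{m,n}$ ``is taken care of by the saturation by the singular locus in the definition of the critical ideal.'' But the saturation in (\ref{eq:betaedd}) is by $(I_{C_{m,n}})_{\mathrm{sing}}$, i.e.\ the singular locus of $C_{m,n}$, not of $\HH_{m,n}$; it ensures $(x,\beta)$ is smooth on $C_{m,n}$, but a priori says nothing about smoothness of $x$ on $\HH_{m,n}$. What actually delivers smoothness is the argument you already give: near $(x,\beta)$ the projection $\pi$ is an immersion onto a $\dim\HH_{m,n}$-dimensional analytic submanifold contained in $\HH_{m,n}$, and since $\HH_{m,n}$ is irreducible of the same dimension this submanifold is (analytically) open in $\HH_{m,n}$, so $x$ is a smooth point. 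The paper itself elides this point, so it is not a defect relative to the paper's proof, but the justification you offered for it is the one thing that should be corrected.
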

\begin{proof}
	As noted in the proof of Theorem \ref{prop:finite}; if $\nabla_\beta C_{m, n}(x, \beta)$ has full rank, then for any $p$ in the normal space of $C_{m, n}$ it holds that $\pi(p)$ is in the normal space of $\HH_{m, n}$ at $x$, which means that it is critical.
\end{proof}
Note that if $m=k$, then the normal space of $\HH_{m, n}$ at any $p\in \Bbb C^{mn}$ is a point, since $\HH_{m, n}=\Bbb C^{mn}$. This means that the only critical point of $\HH_{m, n}$ with respect to $p$ is $p$ itself. For $C_{m, n}$ however we have a different situation. Note that in this case $\nabla_\beta\, C_{m, n}(x, \beta)$ is a square matrix. Thus any critical point $(x, \beta)$ of $C_{m, n}$ with respect to $(p, 0)$, which is not $(p, 0)$ itself, would have  $\text{rank}\, \nabla_\beta C_{m, n}(x, \beta) < k$. In Example \ref{ex:circle} these critical points all come from the subvariety of $C_{m, n}$ consisting of points $(x, \beta)$ where $x$ is such that $x_i=x_j$ for some $1\leq i < j\leq m$. This means that all critical points for which $x\neq p$  lie on this subvariety of $C_{m, n}$ and are degenerate in this way. This also means that any $(x, \beta) \in C_{m, n}$ lie in the normal space of this subvariety on $C_{m, n}$. 

For $m>k$ we have observed through numerical experiments and proved for two special cases (see Section \ref{sec:spheres} and \ref{sec:linear}) that this does in general not happen. Therefore we state the following conjecture:
\begin{conj}\label{conj:eq}
	If $mr>k$ and $\pi$ is generically finite of degree $\omega$, then
	$$EDdegree(\HH_{m, n}) =\frac{1}{\omega} \beta EDdegree(C_{m, n})$$
\end{conj}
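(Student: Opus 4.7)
The strategy is to upgrade the inequality from Theorem \ref{thm:ineq} to an equality by establishing a bijection (up to the factor $\omega$) between the critical points of $\beta EDdegree(C_{m,n})$ and those of $EDdegree(\HH_{m,n})$. Concretely, I plan to show that, for a generic $p\in\Bbb C^{mn}$, every critical point of $C_{m,n}$ with respect to $(p,0)$ projects via $\pi$ to a critical point of $\HH_{m,n}$ with respect to $p$, and that each critical point of $\HH_{m,n}$ has exactly $\omega$ preimages in the set of critical points of $C_{m,n}$.

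First I would note that the hypothesis $mr>k$ is precisely what forces $\HH_{m,n}$ to be a proper subvariety of $\Bbb C^{mn}$: a dimension count on $C_{m,n}\subseteq \Bbb C^{mn+k}$ gives $\dim C_{m,n}=mn+k-mr$ generically, and combined with $\pi$ being generically finite this yields $\dim \HH_{m,n}=mn-(mr-k)<mn$. Hence $EDdegree(\HH_{m,n})$ is a non-trivial invariant. Next, by the second part of Theorem \ref{thm:ineq}, any preimage under $\pi$ of a critical point $x\in \HH_{m,n}$ with respect to $p$ is a critical point of $C_{m,n}$ with respect to $(p,0)$. Using the ED-correspondence $\mathcal{E}_{\HH}$ and arguing that for generic $p$ all critical points of $\HH_{m,n}$ lie in the Zariski open locus $U\subseteq \HH_{m,n}$ over which $\pi$ has fibre of size exactly $\omega$, this gives at least $\omega\cdot EDdegree(\HH_{m,n})$ critical points of $C_{m,n}$.

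The reverse direction is the content of the conjecture. By Corollary \ref{prop:cond}, a critical point $(x,\beta)$ of $C_{m,n}$ descends to a critical point of $\HH_{m,n}$ whenever $\nabla_\beta C_{m,n}(x,\beta)$ has full rank $k$. Thus it suffices to show that for generic $p$, no critical point of $C_{m,n}$ with respect to $(p,0)$ lies in the rank-drop locus
\[
S:=\{(x,\beta)\in C_{m,n} \ \mid\ \operatorname{rank}\nabla_\beta C_{m,n}(x,\beta)<k\}.
\]
By Lemma \ref{lem:finite}, $S$ is a proper closed subvariety of $C_{m,n}$. The plan is to intersect $S$ with the ED-correspondence $\mathcal{E}_C\subseteq C_{m,n}\times \Bbb C^{mn}$ of $\beta EDdegree$, obtaining $\mathcal{E}_C^S:=\sigma_1^{-1}(S)\cap \mathcal{E}_C$, and to bound its dimension. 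At a point $(x,\beta)\in S$ with $\operatorname{rank}\nabla_\beta=r<k$, the fibre $\sigma_1^{-1}(x,\beta)$ consists of those $(p,0)$ for which $(p-x,0)$ lies in the row span of $J(C_{m,n})(x,\beta)$; solving the Lagrange-multiplier system one sees that this fibre has dimension at most $s-r$, where $s$ is the number of generators of $I_{C_{m,n}}$, versus the generic value $c_\HH=\operatorname{codim}(\HH_{m,n})$. A careful stratification of $S$ by the rank $r$ together with the codimension estimate $mr>k$ should give $\dim \mathcal{E}_C^S<mn=\dim \mathcal{E}_C$, so that $\sigma_2(\mathcal{E}_C^S)$ is contained in a proper closed subvariety of $\Bbb C^{mn}$ and a generic $p$ avoids it.

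The main obstacle is precisely this last dimension count. The jump in fibre dimension of $\sigma_1$ along $S$ must be shown to be strictly smaller than the drop in $\dim S$ relative to $\dim C_{m,n}$, across every stratum of $S$. A priori there could be a "horizontal" component of $S$ along which the two effects exactly cancel, producing a spurious positive-dimensional family of critical points of $C_{m,n}$ projecting to a Zariski-dense subset of $\Bbb C^{mn}$; ruling this out in full generality is where the hypothesis $mr>k$ must be used in a stronger, more delicate way than the simple codimension argument above, and this is presumably why the authors state the result as a conjecture rather than a theorem.
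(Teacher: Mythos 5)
You have correctly observed what the paper itself does not disguise: this statement is \emph{not proved} in the paper; it is Conjecture \ref{conj:eq}. The paper establishes the equality only in two special cases — affine linear subspaces of fixed codimension (Theorem \ref{thm:gene}) and $(n-1)$-spheres (Theorem \ref{thm:eq}), both with $\omega=1$ — and in those proofs it follows exactly the strategy you lay out: invoke Corollary \ref{prop:cond} and show that, for generic $p$, no critical point of $C_{m,n}$ with respect to $(p,0)$ lands in the rank-drop locus $S$ of $\nabla_\beta C_{m,n}$. In the linear case this is done by noting that rank drop forces the critical configuration and $p$ to lie on a common hyperplane; in the sphere case, on a cone over an $(n-2)$-sphere that a generic $p$ with $m>n+1$ cannot lie on. Your reduction of the general case to the dimension estimate $\dim \mathcal{E}_C^S < mn$ and your local fibre bound $\dim\sigma_1^{-1}(x,\beta)\le mr - \text{rank}\,\nabla_\beta C_{m,n}(x,\beta)$ are both correct first steps (though note you overload the symbol $r$: the paper reserves it for the number of equations in $f_\beta$, while you also use it for $\text{rank}\,\nabla_\beta$). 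The piece you flag as missing — a codimension bound on each rank stratum of $S$ that compensates the jump in $\sigma_1$-fibre dimension, uniformly in the hypothesis class — is precisely the open step; the paper sidesteps it by ad hoc geometric arguments in the two proved cases rather than by any general estimate. Your proposal is therefore a faithful outline of the known route with the gap honestly and accurately located, not a completed proof, and this matches the paper's own status for the statement.
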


\subsection{Complete intersection}
In this section we assume that $C_{m, n}$ is a complete intersection. From the first statement below we will see that it suffices to assume that the zero locus of $f_\beta$ is a complete intersection for a generic choice of $\beta\in \Bbb C^k$. This allows us to read out the dimension of $\HH_{m, n}$ when the projection $\pi$ is generically finite. 

The fact that $C_{m, n}$ is a complete intersection is computationally advantageous since it allows using Lagrange multipliers instead of computing the minors in the critical ideal (see Section \ref{sec:2}). We end the section by showing that $\beta EDdegree(C_{m, n})$ is monotone in $n$ for a special case. 
\begin{prop}\label{prop:ci}
	Suppose that the zero locus of $f_\beta$ is a complete intersection for a generic $\beta\in \Bbb C^k$. Then $C_{m, n}$ is a complete intersection.
\end{prop}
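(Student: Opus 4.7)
The plan is to verify that the codimension of $C_{m,n}$ in $\Bbb C^{mn+k}$ equals the number of generators used to define it. Let $r$ be the number of polynomials in the system $f_\beta$, so that $C_{m,n}$ is cut out in $\Bbb C^{mn+k}$ by the $mr$ equations $f_{\beta,j}(x_i) = 0$, $1\le i\le m$, $1\le j\le r$. By Krull's height theorem every component of $C_{m,n}$ has codimension at most $mr$, giving $\dim C_{m,n} \ge mn+k-mr$; the real content is the matching upper bound.

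For the upper bound I would use the projection $\tau \colon C_{m,n} \to \Bbb C^k$ onto the parameter coordinates. Its fiber over $\beta$ is the Cartesian power $V_\beta^m \subseteq \Bbb C^{mn}$. By hypothesis, for $\beta$ in a dense Zariski open subset $U\subseteq\Bbb C^k$ the variety $V_\beta$ is a nonempty complete intersection of codimension $r$, so $\dim V_\beta^m = m(n-r)$. Nonemptiness on $U$ forces $\tau$ to be dominant, and since $C_{m,n}$ is assumed irreducible throughout Section~\ref{sec:3}, the fiber dimension theorem yields
$$\dim C_{m,n}\;=\;\dim\Bbb C^k + \dim(\text{generic fibre of }\tau) \;=\; k + m(n-r) \;=\; mn+k-mr,$$
which together with the Krull bound forces equality. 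Thus $\operatorname{codim} C_{m,n} = mr$ equals the number of defining equations, so $C_{m,n}$ is a complete intersection.

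The main obstacle is not any deep argument but rather being careful about which components of $C_{m,n}$ one counts: the standing irreducibility assumption is essential. Without it, trivial examples such as $f_\beta=\beta x$ with $n=k=1$ show that the incidence variety can acquire extraneous components over the bad locus $\{\beta=0\}$ of smaller than expected codimension, spoiling the complete intersection property even though $V_\beta$ is a complete intersection for generic $\beta$. With irreducibility in hand, the argument reduces to invoking the generic fibre dimension formula correctly, using that the complete intersection hypothesis produces a nonempty generic fibre of $\tau$ of the expected dimension $m(n-r)$.
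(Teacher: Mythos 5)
Your proof is correct and takes a genuinely different route from the paper's. The paper argues via the Jacobian: it writes out $J(C_{m,n})$ in block form, observes that the blocks $\nabla_{x_i}f_\beta(x_i)$ in the block-diagonal part each have rank $r$ at a smooth point (because $V_\beta$ is a complete intersection for generic $\beta$, and singularities of $V_\beta$ pull back to singularities of $C_{m,n}$), and so deduces that the Jacobian has rank $\ge mr$ at a generic point, which bounds the codimension below by $mr$; Krull's height theorem then gives the matching upper bound on codimension. Your argument instead computes $\dim C_{m,n}$ globally: you take the dominant projection $\tau$ onto the parameter space $\mathbb{C}^k$, note that the generic fibre is $V_\beta^m$ of dimension $m(n-r)$, and apply the fibre-dimension theorem for a dominant map of irreducible varieties to get $\dim C_{m,n} = k + m(n-r)$ directly. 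The dimension-counting route avoids any discussion of smooth versus singular loci and is, if anything, tighter: the fibre-dimension theorem already gives the dimension exactly, so the Krull step you invoke is a redundant but harmless safety net. Both arguments rely on the standing irreducibility hypothesis on $C_{m,n}$, and you correctly flag (with the $f_\beta = \beta x$ example) that this hypothesis is doing real work: without it, extraneous low-codimension components over the non-generic locus of $\beta$ can destroy the complete-intersection property even when $V_\beta$ is a complete intersection generically.
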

\begin{proof}
	By definition, the ideal of $C_{m, n}$ is generated by $mr$ equations $f_\beta(x_1)=\dots = f_\beta(x_m)=0$. To show that $C_{m, n}$ is a complete intersection we will show that its codimension in $\Bbb C^{mn+k}$ is $mr$.
	To show that $C_{m, n}$ has codimension $mr$ we consider the rank of its Jacobian matrix:
	$$J(C_{m, n}) = \begin{bmatrix}
	\nabla_{\beta} f_\beta(x_1) & \nabla_{x_1} f_\beta(x_1) & 0 & \dots & 0\\
	\nabla_{\beta} f_\beta(x_2) & 0& \nabla_{x_2} f_\beta(x_2) & \dots & 0\\
	\vdots & \vdots & \vdots & \dots & \vdots \\
	\nabla_{\beta} f_\beta(x_m) & 0& 0 & \dots &\nabla_{x_m} f_\beta(x_m)
	\end{bmatrix}$$
	Note that the right part of the above matrix is block-diagonal and is of rank $mr$ if rank$(\nabla_{x_i} f_\beta(x_i))=r$ for all $1\leq i\leq m$ for any smooth point $(x, \beta)$ on $C_{m, n}$. Note that since $C_{m, n}$ is irreducible, this condition is satisfied by the fact that $f_\beta$ is a complete intersection for a generic $\beta$ and that if $(x_i, \beta)$ is a singular point of $f_\beta$, then $(x, \beta)$ is a singular point of $C_{m, n}$. Consequently, $C_{m, n}$ has codimension $mr$ and is thus a complete intersection.
\end{proof}

\begin{cor}
	If the conditions of Proposition \ref{prop:ci} hold, then codim$(\HH_{m, n}) = mr-k$.
\end{cor}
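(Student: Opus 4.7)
The plan is to derive the codimension of $\HH_{m,n}$ from the codimension of $C_{m,n}$ via a direct dimension count, using the fact that the projection $\pi\colon C_{m,n}\to \HH_{m,n}$ preserves dimension under the standing assumptions of the section.

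First, I would invoke Proposition \ref{prop:ci}: under the hypothesis that $f_\beta$ cuts out a complete intersection for generic $\beta\in\Bbb C^k$, the incidence variety $C_{m,n}\subseteq \Bbb C^{mn+k}$ is a complete intersection cut out by the $mr$ equations $f_\beta(x_1)=\dots=f_\beta(x_m)=0$, hence has codimension exactly $mr$. This yields
\[
\dim C_{m,n} \;=\; mn+k-mr.
\]

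Next, I would use the running assumption of the section that $\pi$ is generically finite (which by Lemma \ref{lem:finite} is equivalent to $\nabla_\beta\, C_{m,n}(x,\beta)$ having full rank $k$ at a generic $(x,\beta)\in C_{m,n}$, and is needed to make $\HH_{m,n}$ a sensible object under the $\beta EDdegree$ framework, cf.\ Remark \ref{rem:finite}). Because $C_{m,n}$ is irreducible, generic finiteness of $\pi$ on $C_{m,n}$ implies $\dim \pi(C_{m,n}) = \dim C_{m,n}$, and hence $\dim \HH_{m,n}=\dim C_{m,n}=mn+k-mr$. Subtracting from $mn$ then gives
\[
\mathrm{codim}(\HH_{m,n}) \;=\; mn-\dim \HH_{m,n} \;=\; mr-k,
\]
which is the desired equality.

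There is no serious obstacle here; the statement is essentially a bookkeeping consequence of two inputs that are already in place. The only subtlety worth flagging is to confirm that generic finiteness of $\pi$ really is available in the hypothesis: it is a global standing assumption of Section \ref{sec:3} (see the paragraph preceding Theorem \ref{thm:ineq} and Remark \ref{rem:finite}), and without it one would need to subtract the generic fiber dimension of $\pi$ from $\dim C_{m,n}$ to obtain $\dim \HH_{m,n}$, which would change the final formula. Once this point is noted, the corollary follows immediately from the complete intersection dimension count.
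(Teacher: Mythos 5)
Your proof is correct and is exactly the dimension count the paper leaves implicit (the corollary is stated without proof, but the preceding paragraph of that subsection says "This allows us to read out the dimension of $\HH_{m,n}$ when the projection $\pi$ is generically finite"). You rightly flag that the conditions of Proposition \ref{prop:ci} by themselves only give $\operatorname{codim}(C_{m,n})=mr$, and that passing from $\dim C_{m,n}$ to $\dim \HH_{m,n}$ requires the standing generic-finiteness assumption on $\pi$ (cf.\ Remark \ref{rem:finite} and the discussion around the ED-correspondence); this is precisely the hidden hypothesis the author invokes.
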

When $C_{m, n}$ is a complete intersection we can utilize a trick for computing the $\beta EDdegree(C_{m, n})$ more efficiently. In this case, the Jacobian $J(C_{m, n})$ has full rank and we can replace the minors in the critical ideal (\ref{eq:betaedd}) with Lagrange multipliers. This yields the following critical ideal:
\begin{cor}\label{cor:crit}
	Suppose $C_{m, n}$ is a complete intersection, then the critical ideal for computing $\beta EDdegree(C_{m, n})$ is given by:
	$$\bigg( I_{C_{m, n}} + \bigg\langle \sum_{i=1}^m \lambda_i \nabla f_\beta(x_i) - ((p, 0)-(x,0)) \bigg\rangle \bigg) \colon ((I_{C_{m, n}})_{sing})^{\infty}$$
\end{cor}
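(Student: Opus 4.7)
The plan is to show that after saturation by $(I_{C_{m,n}})_{sing}^\infty$, the Lagrange formulation defines the same critical scheme as the minor formulation in (\ref{eq:betaedd}).

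First I would invoke Proposition \ref{prop:ci}: since $C_{m,n}$ is a complete intersection of codimension $c = mr$ in $\Bbb C^{mn+k}$, at every smooth point $(x,\beta)$ the Jacobian $J(C_{m,n})$ has full rank $mr$. Consequently its rows span the normal space $N_{(x,\beta)}C_{m,n}$. The augmented matrix
$$M \;=\; \begin{pmatrix}(p,0)-(x,0)\\ J(C_{m,n})\end{pmatrix}$$
has $c+1$ rows and $mn+k$ columns, and the vanishing of all its $(c+1)\times(c+1)$ minors is equivalent to $\mathrm{rank}(M)\leq c$. At a smooth point, where $J(C_{m,n})$ already realizes rank $c$, this collapses to the statement that the extra row $(p,0)-(x,0)$ lies in the row span of $J(C_{m,n})$, i.e.\ in $N_{(x,\beta)}C_{m,n}$.

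The second step is to re-express this membership as the existence of Lagrange multipliers. The rows of $J(C_{m,n})$ are precisely the gradients $\nabla f_{\beta,j}(x_i)$ (extended by zero on the $\beta$-block or the other $x_l$-blocks as appropriate), so $(p,0)-(x,0)$ lies in their span exactly when there exist scalars $\lambda_i$ (vector-valued of length $r$ when $f_\beta$ has $r$ components) satisfying
$$(p,0)-(x,0) \;=\; \sum_{i=1}^{m}\lambda_i \,\nabla f_\beta(x_i).$$
Adjoining these as auxiliary variables and adding the above relation to $I_{C_{m,n}}$ produces an ideal in a larger polynomial ring whose projection to $\Bbb C^{mn+k}$ cuts out exactly the set-theoretic critical locus of $C_{m,n}$ on its smooth part. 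Since the full-rank Jacobian determines the $\lambda_i$ uniquely at a smooth point, this elimination is clean and introduces no spurious components away from $V(I_{sing})$.

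Finally I would argue that the saturation $\colon ((I_{C_{m,n}})_{sing})^{\infty}$ in both ideals removes precisely the locus where $J(C_{m,n})$ drops rank, which is exactly the locus where the two formulations can differ. On the complement, the pointwise equivalence established above lifts to equality of saturated ideals as zero-dimensional schemes (by \cite[Theorem 2.7]{edd} applied to $C_{m,n}$, assuming genericity of $p$). The main obstacle is this last passage from pointwise to ideal-theoretic agreement, but since $\beta EDdegree(C_{m,n})$ is computed as a degree of the saturated critical scheme, it is enough to know the two formulations define the same reduced zero-dimensional scheme for generic $p$, which follows from the full-rank argument on the smooth locus.
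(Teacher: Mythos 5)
Your proposal is correct and follows the same line of reasoning the paper implicitly relies on; the paper itself offers only the one-sentence justification preceding the corollary (that the full-rank Jacobian of a complete intersection lets one trade the rank-drop minors for Lagrange multipliers), and your write-up simply fills in the standard details. One small imprecision: the gradients $\nabla f_{\beta,j}(x_i)$ are not ``extended by zero on the $\beta$-block'' --- they have genuine $\beta$-partials, which is precisely what forces the constraint $\sum_i \lambda_i \nabla_\beta f_\beta(x_i) = 0$ coming from the zero $\beta$-block of $(p,0)-(x,0)$; this does not affect the validity of your argument, but the parenthetical should be corrected.
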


\begin{lemma}\label{lem:count}
	Suppose that $C_{m, n}$ is a complete intersection and has $s$ non-singular critical points for some specific point configuration $p'\in \Bbb C^{mn}$. Then $s\leq \beta EDdegree(C_{m, n})$.
\end{lemma}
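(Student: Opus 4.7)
The plan is a specialization argument based on the Lagrange formulation of the critical ideal from Corollary~\ref{cor:crit}. Consider the Lagrange correspondence
$$\mathcal{L} := \Bigl\{(x,\beta,\lambda,p)\in \Bbb C^{mn+k+mr}\times\Bbb C^{mn} \Bigm| f_\beta(x_i)=0,\ \textstyle\sum_{i=1}^m \lambda_i\nabla f_\beta(x_i)=((p,0)-(x,0))\Bigr\}$$
and the projection $\rho\colon \mathcal{L}\to \Bbb C^{mn}$ onto the $p$-coordinate. By Corollary~\ref{cor:crit}, after saturating by the singular locus of $C_{m,n}$, the morphism $\rho$ is generically finite of degree $\beta EDdegree(C_{m,n})$.

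The first step is to lift each non-singular critical point $q_j=(x^{(j)},\beta^{(j)})$ of $p'$ (for $j=1,\ldots,s$) to $\mathcal{L}$. Since $C_{m,n}$ is a complete intersection, non-singularity of $q_j$ is equivalent to the Jacobian $J(C_{m,n})(q_j)$ having full row rank $mr$; consequently its transpose is injective and the Lagrange equation determines a \emph{unique} multiplier vector $\lambda_j^{\star}$. This yields $s$ distinct points $\widetilde q_j := (q_j,\lambda_j^{\star})\in \rho^{-1}(p')$, each isolated in the fiber because $s$ is assumed finite.

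The main step is a local persistence argument for these isolated lifts. Near each $\widetilde q_j$ the morphism $\rho$ is quasi-finite, so by Zariski's Main Theorem it admits an analytic-local factorization $W_j\hookrightarrow \widetilde W_j \xrightarrow{\text{finite}} V_j$ where $W_j$ is a neighborhood of $\widetilde q_j$ and $V_j$ is a neighborhood of $p'$. Continuity of roots of the Lagrange system then guarantees that for every $p\in V_j$ sufficiently close to $p'$, the fiber $\rho^{-1}(p)\cap W_j$ is non-empty. Choosing the $W_j$ pairwise disjoint, for any generic $p\in \bigcap_j V_j$ we obtain $s$ distinct points of $\rho^{-1}(p)$, one in each $W_j$; hence $s\leq |\rho^{-1}(p)|=\beta EDdegree(C_{m,n})$.

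The main obstacle is that $\rho$ need not be proper, so in principle critical configurations at $p'$ could migrate to infinity under perturbation of $p$. The remedy above---localizing to disjoint analytic neighborhoods of the isolated lifts $\widetilde q_j$---circumvents this by replacing global properness with the quasi-finiteness of $\rho$ at each $\widetilde q_j$, which follows automatically from the non-singularity of $q_j$ and the finiteness of $s$.
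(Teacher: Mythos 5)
You take essentially the same route as the paper: both proofs set up the square Lagrange system of Corollary~\ref{cor:crit}, observe that it is parametrized polynomially by the configuration $p$, and deduce the inequality from the principle that the number of isolated non-singular solutions of a parametrized square polynomial system at a special parameter value is at most the number at a generic one. The paper obtains this by citing the coefficient--parameter continuation theorem from the Bertini reference (Theorem 7.1.1(2)), whereas you re-derive it via a local analytic persistence argument on the Lagrange correspondence, which is a valid but more labor-intensive way of packaging the same specialization idea.
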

\begin{proof}
	To show this we will construct a square system of equations describing the critical ideal, parametrized by a start configuration $p\in \Bbb C^{mn}$. The claim then follows from \cite[Theorem 7.1.1(2)]{bertini}. First note that $C_{m, n}$ is cut out by $mr$ equations in $mn+k$ variables. Since $C_{m, n}$ is assumed to be a complete intersection we may use the critical ideal of Corollary \ref{cor:crit}. This critical ideal uses Lagrange multipliers and thus introduces $mr$ new variables. There are $mn+k$ equations coming from the Jacobian $J(C_{m, n})$. In total this yields $mn+k+mr$ equations in $mn+k+mr$ variables, which thus yields a square system of polynomial equations.  This system is parametrized by the start configuration $p$ and can thus be described by a polynomial map $F(x, \beta; p)\colon \Bbb C^{mn+k+mr} \times \Bbb C^{mn} \to \Bbb C^{mn+k+mr}$. Let $N(p)$ denote the number of non-singular solutions of the system $F(x, \beta;p) = 0$ for a specific choice of $p$. Then by \cite[Theorem 7.1.1(2)]{bertini} it follows that $N(p') \leq N(p)$ for a generic choice of $p$, by which the statement follows.
\end{proof}

\begin{thm}\label{thm:inc}
	Suppose $f_\beta$ is such that $f_\beta(y) = \sum_{i=1}^n h(\beta_i)(y_i-\beta_{n+i})^2 + \beta_0$. Then,
	$$\beta EDdegree(C_{m, n}) \leq \beta EDdegree(C_{m, n+1})$$
\end{thm}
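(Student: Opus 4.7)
The plan is to invoke Lemma \ref{lem:count}: it suffices to exhibit $\beta EDdegree(C_{m,n})$ distinct non-singular critical points of $C_{m,n+1}$ at some particular configuration $\tilde p\in\Bbb C^{m(n+1)}$. First I would fix a generic $p\in\Bbb C^{mn}$ (so that $C_{m,n}$ has exactly $\beta EDdegree(C_{m,n})$ non-singular critical points $(x^{(j)},\beta^{(j)},\lambda^{(j)})$ at $p$), choose a simple root $w^*$ of $h$ (so that $h(w^*)=0$ and $h'(w^*)\neq 0$, a generic condition on $h$), and pick a generic vector $q\in\Bbb C^m$. Set $\tilde p_{i,k}=p_{i,k}$ for $k\leq n$ and $\tilde p_{i,n+1}=q_i$.

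Next I would lift each $(x^{(j)},\beta^{(j)},\lambda^{(j)})$ to a candidate critical point of $C_{m,n+1}$ at $\tilde p$ by setting $\tilde x^{(j)}_{i,k}=x^{(j)}_{i,k}$ for $k\leq n$, $\tilde x^{(j)}_{i,n+1}=q_i$, $\tilde\lambda^{(j)}=\lambda^{(j)}$, and extending $\beta^{(j)}$ so that the new weight parameter equals $w^*$ and the new center parameter $\tilde c^{(j)}_{n+1}$ solves
\begin{equation*}
\sum_{i=1}^m\lambda^{(j)}_i(q_i-c)^2=0,
\end{equation*}
which, after using $\sum_i\lambda^{(j)}_i=0$ (the Lagrange condition coming from $\beta_0$ in the $n$-case), becomes a linear equation in $c$ with a unique solution for generic $q$. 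The equation $f_{\tilde\beta}(\tilde x^{(j)}_i)=0$ then reduces to $f_\beta(x^{(j)}_i)=0$ because $h(w^*)=0$, and the Lagrange conditions from Corollary \ref{cor:crit} split into the $n$-case conditions (satisfied by hypothesis on $(x^{(j)},\beta^{(j)},\lambda^{(j)})$) together with the $m+2$ new conditions indexed by $\tilde x_{i,n+1}$, $\tilde w_{n+1}$, and $\tilde c_{n+1}$, all of which are satisfied either trivially (because $h(w^*)=0$ annihilates the relevant factors) or by the choice of $\tilde c^{(j)}_{n+1}$.

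The main technical step is verifying \emph{non-singularity} of each lift as an isolated solution of the square Lagrange system. The Jacobian at the lift decomposes as
\begin{equation*}
J=\begin{pmatrix} A & B \\ C & D\end{pmatrix},
\end{equation*}
where $A$ is the Jacobian of the $n$-case critical system (invertible by genericity of $p$). Because $h(w^*)=0$ kills most mixed partial derivatives, $B$ has only one nonzero column (the $\tilde w_{n+1}$ column, supported in the rows coming from $f_{\tilde\beta}(\tilde x_i)$) and $C$ has only one nonzero row (the $L_{\tilde w_{n+1}}$ row, supported in the $\tilde\lambda_i$ columns). A Schur-complement computation yields $\det J=\det A\cdot\det(D-CA^{-1}B)$, and evaluating the $(m+2)\times(m+2)$ second factor directly --- using that the only nonzero entries of $D$ are an $I_m$ block, a column proportional to $\lambda^{(j)}_i(q_i-\tilde c^{(j)}_{n+1})$, and the entry $-2h'(w^*)\sum_i\lambda^{(j)}_iq_i$ on the antidiagonal of the bottom-right $2\times 2$ block --- collapses to a nonzero multiple of $h'(w^*)^2\bigl(\sum_i\lambda^{(j)}_iq_i\bigr)^2$, which is nonzero because $h'(w^*)\neq 0$ by choice and $\sum_i\lambda^{(j)}_iq_i\neq 0$ for generic $q$.

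With non-singularity confirmed, the $\beta EDdegree(C_{m,n})$ distinct lifts at $\tilde p$ give the inequality through Lemma \ref{lem:count}. The chief hurdle is the block-Jacobian determinant computation sketched above; a subsidiary issue is the assumption that $h$ admits a simple root, which fails for degenerate $h$ such as a nonzero constant (the sphere family). Such degenerate cases can be treated by either perturbing $h$ through a generic one-parameter family and passing to the limit, or by observing via Proposition \ref{prop:finite} that for constant $h$ the projection $\pi$ fails to be generically finite, in which case $\beta EDdegree(C_{m,n})$ is infinite and the inequality must be interpreted accordingly.
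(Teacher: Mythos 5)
Your argument is a genuinely different route from the paper's. The paper proves the inequality by \emph{splitting} the last coordinate: it introduces $i\colon\Bbb C^n\hookrightarrow\Bbb C^{n+1}$, $(y_1,\dots,y_n)\mapsto(y_1,\dots,2^{-1/2}y_n,2^{-1/2}y_n)$, together with a parameter map $\phi$ that duplicates the last weight $\beta_n$ and replaces the last centre by two copies of $2^{-1/2}\beta_{2n}$, so that $\overline{f}_{\phi(\beta)}(i(y))=f_\beta(y)$ identically in $y$. Each critical triple of $C_{m,n}$ at $p$ then maps under $\phi$ to a critical triple of $C_{m,n+1}$ at $\phi(p,0)$, and Lemma~\ref{lem:count} finishes. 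Your construction instead \emph{adds a dead direction}: set the new weight to a simple root $w^*$ of $h$, so that the extra summand $h(w^*)(\tilde x_{i,n+1}-\tilde c_{n+1})^2$ vanishes in the constraints and in all but one of the new Lagrange conditions, solve the one surviving equation $\sum_i\lambda_i(q_i-c)^2=0$ (linear in $c$ by $\sum_i\lambda_i=0$) for the new centre, and verify non-singularity via a Schur complement. I checked that computation: ordering the new rows as $L_{\tilde x_{1,n+1}},\dots,L_{\tilde x_{m,n+1}},L_{\tilde w_{n+1}},L_{\tilde c_{n+1}}$ and columns as $\tilde x_{\cdot,n+1},\tilde w_{n+1},\tilde c_{n+1}$, the Schur complement evaluates to $-4h'(w^*)^2\bigl(\sum_i\lambda_i^{(j)}q_i\bigr)^2$, confirming your claim. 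In one respect this is more complete than the paper's argument, which invokes Lemma~\ref{lem:count} without verifying that its lifts are non-singular --- a hypothesis that lemma requires --- whereas your Schur-complement step supplies that verification explicitly.

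The genuine gap is the reliance on $h$ admitting a simple root, a hypothesis the theorem does not state. Two families fall outside your argument: nonconstant $h$ all of whose roots are multiple (e.g.\ $h(\beta)=(\beta-a)^2$), where $h'(w^*)=0$ kills both the linear equation for $c$ and the Schur complement so the lifts become singular; and constant $h$, which is the sphere family. Your two workarounds do not close this. The perturbation-and-limit suggestion is not carried out, and it is delicate: the two sides of the inequality are $\beta EDdegree$s of \emph{different} incidence varieties, and the number of non-singular solutions of a square Lagrange system can jump discontinuously in a limit. The Proposition~\ref{prop:finite} observation is accurate for the literal constant-$h$ parametrization (the weights $\beta_1,\dots,\beta_n$ drop out, $\pi$ has $\Bbb C^n$ in every fibre, and the inequality is vacuous), but the corollary after Theorem~\ref{thm:eq} applies the monotonicity to the sphere family in its $(n{+}1)$-parameter form, which has no redundant weight slots and where the inequality is nonvacuous; your proof says nothing about that case. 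The coordinate-splitting map $\phi$, by contrast, makes no assumption whatsoever on $h$, works verbatim for constant $h$, and adapts immediately to the sphere parametrization, which is presumably why the paper chose it.
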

\begin{proof}
	Let $i\colon \Bbb C^n \hookrightarrow \Bbb C^{n+1}$ be the map that takes $$(y_1, \dots, y_n) \mapsto (y_1, \dots, 2^{-1/2}y_n, 2^{-1/2}y_n)$$ and let $\phi  \colon \Bbb C^{mn+2n+1} \hookrightarrow \Bbb C^{m(n+1)+2(n+1)+1}$ be the map that takes $$(x_1, \dots, x_m, \beta_0, \dots, \beta_{2n}) \mapsto (i(x_1), \dots, i(x_m), \beta_0, \dots, \beta_n, \beta_n, \beta_{n+1}, \dots, 2^{-1/2}\beta_{2n}, 2^{-1/2}\beta_{2n})$$ 
	Let $\overline{f_\beta}$ denote the corresponding polynomial for $C_{m, n+1}$. Note that if $(x, \beta)\in C_{m, n}$, then $\phi(x, \beta) \in C_{m, n+1}$ since:
	$$\overline{f_\beta}(i(y)) = \sum_{i=1}^{n-1} h(\beta_i)(y_i- \beta_{n+i})^2 + \frac{1}{2}h(\beta_n)(y_n- \beta_{2n})^2 + \frac{1}{2}h(\beta_n)(y_n- \beta_{2n})^2+ \beta_0 = f_\beta(y) = 0$$ 
	Suppose that $(x, \beta)$ is a critical point of $C_{m, n}$ with respect to a point configuration $p\in \Bbb C^{mn}$. We will show that $\phi(x, \beta)$ is a critical point of $C_{m, n+1}$ with respect to $\phi(p, 0)$. We do this by showing that $\phi(p, 0)$ is in the row space of $J(C_{m, n+1})|_{\phi(x, \beta)}$. $J(C_{m, n+1})|_{\phi(x, \beta)}$ is a $m\times( m(n+1)+2(n+1)+1)-$matrix which means that it has $m+2$ more columns than $J(C_{m, n})$. We will show that the added columns are just duplicates of columns from $J(C_{m, n})$ and that they do not add any equation to the critical ideal. 
	
	Note that the column of $J(C_{m, n})$ corresponding to $\beta_{n}$ is duplicated and the column corresponding to $\beta_{2n}$ is duplicated and scaled in $J(C_{m, n+1})|_{\phi(x, \beta)}$. The scaling is fine since its supposed to sum up to zero when multiplied by $\lambda_1, \dots, \lambda_m$ in the critical ideal. The other columns which differ are the columns corresponding to $x_{i, n}$ for $1\leq i \leq m$. These columns are also duplicated and scaled by $2^{-1/2}$. Note that the derivative of $f_\beta$ with respect to $x_{i, n}$ is linear in $x_{i, n}$ and $\beta_{2n}$. Also note that the $x_{i, n}$-coordinate of $\phi(p, 0)$ is also scaled by $2^{-1/2}$. This means that the whole equation in the critical ideal corresponding to this column is scaled by $2^{-1/2}$, which means that its solutions are the same. We can view it as if the column of $\begin{pmatrix}
	(x, 0)-(p, 0)\\
	J(C_{m, n})
	\end{pmatrix}$ corresponding to $x_{i, n}$ is mapped to to the following columns in $\begin{pmatrix}
	\phi(x, 0)-\phi(p, 0)\\
	J(C_{m, n+1})|_{\phi(x, \beta)}
	\end{pmatrix}$:
	$$\begin{bmatrix}
	x_{i, n} - p_{i, n} \\
	0 \\
	2h(\beta_i)(x_{i, n} - \beta_{2n}) \\
	0
	\end{bmatrix} \overset{\phi}{\mapsto} \begin{bmatrix}
	2^{-1/2} x_{i, n} - 2^{-1/2} p_{i, n} &  2^{-1/2} x_{i, n} - 2^{-1/2} p_{i, n}\\
	0  & 0\\
	2h(\beta_i)(2^{-1/2}x_{i, n} - 2^{-1/2}\beta_{2n}) & 2h(\beta_i)(2^{-1/2}x_{i, n} - 2^{-1/2}\beta_{2n})\\
	0 & 0
	\end{bmatrix}$$
	Thus we may conclude that $\phi(x, \beta)$ is a critical point of $C_{m, n+1}$ with respect to $\phi(p, 0)$. This means that any critical point of $C_{m, n}$ with respect to $(p, 0)$ is a critical point of $C_{m, n+1}$ with respect to $\phi(p, 0)$ under $\phi$. The statement then follows from Lemma \ref{lem:count}.
	%
	%
	%
	%
	%
	%
	%
\end{proof}

\begin{cor}
	Suppose $f_\beta$ is a system of equations of the form 
	\[
	\begin{aligned}
	\sum_{i=1}^n h_1(\beta_{1,i})(y_i- \beta_{1, n+i})^2  + \beta_{1,0}  &= 0\\
	\sum_{i=1}^n h_2(\beta_{2, i})(y_i- \beta_{2, n+i})^2  + \beta_{2,0}  &= 0\\
	\vdots& \\
	\sum_{i=1}^n h_r(\beta_{r, i})(y_i- \beta_{r, n+i})^2  + \beta_{r,0}  &= 0
	\end{aligned}
	\]
	such that for a generic choice of $\beta\in \Bbb C^k$, the zero locus of $f_\beta$ is a complete intersection. Then,
	$$\beta EDdegree(C_{m, n}) \leq \beta EDdegree(C_{m, n+1})$$
\end{cor}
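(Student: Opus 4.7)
The plan is to adapt the proof of Theorem \ref{thm:inc} essentially verbatim, extending the embedding $\phi$ so that it acts compatibly on all $r$ equations of the system. The key structural feature we exploit is that the equations $f_1, \ldots, f_r$ are separable in their $\beta$-parameters: distinct equations $f_j$ and $f_{j'}$ share no $\beta$-coordinates, only the coordinates $y_1, \ldots, y_n$. This means the duplication-and-scaling trick of Theorem \ref{thm:inc} can be carried out independently on each equation's block of $\beta$'s, while the shared scaling of the $y_n$-coordinate applies uniformly across all $r$ equations at a given point $x_i$.

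More concretely, I would reuse the injection $i \colon \Bbb C^n \hookrightarrow \Bbb C^{n+1}$ sending $(y_1, \ldots, y_n) \mapsto (y_1, \ldots, y_{n-1}, 2^{-1/2} y_n, 2^{-1/2} y_n)$, and extend it to $\phi$ by applying $i$ to every $x_i$ and, for each $j = 1, \ldots, r$, duplicating $\beta_{j, n}$ and replacing $\beta_{j, 2n}$ by the scaled pair $(2^{-1/2} \beta_{j, 2n}, 2^{-1/2} \beta_{j, 2n})$. The inclusion $\phi(C_{m, n}) \subseteq C_{m, n+1}$ is immediate from the identity $\overline{f_j}(i(y)) = f_j(y)$ proved in Theorem \ref{thm:inc}, applied one equation at a time. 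The essential step is critical-point preservation: I would examine the augmented matrix $\begin{pmatrix} \phi(p, 0) - (i(x), 0) \\ J(C_{m, n+1})|_{\phi(x, \beta)} \end{pmatrix}$ column by column and show that each of the $2r + m$ columns added in passing from $n$ to $n+1$ is either an exact duplicate (for the two $\overline{\beta}_{j, n}$ and $\overline{\beta}_{j, n+1}$-columns) or a $2^{-1/2}$-scaled duplicate (for the $\overline{\beta}_{j, 2n+1}, \overline{\beta}_{j, 2n+2}$-columns and for the $y_n, y_{n+1}$-columns at each $x_i$) of an existing column of $J(C_{m, n})|_{(x, \beta)}$, uniformly across all $r$ rows at the point $x_i$; the relevant entries of the difference vector are scaled to match by construction. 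It follows that the vanishing of the $(mr+1) \times (mr+1)$ minors is preserved under $\phi$, so $\phi$ sends critical points to critical points. Invoking Lemma \ref{lem:count} then yields the desired inequality, provided $C_{m, n+1}$ is a complete intersection; this holds by Proposition \ref{prop:ci} since the $(n+1)$-dimensional system has the same shape as the $n$-dimensional one and therefore inherits generic-fibre completeness.

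The main obstacle I anticipate is the bookkeeping: once the Jacobian has $mr$ rows rather than $m$, one must ensure that the duplication of each shared $x_{i, n}$-column is simultaneously consistent with all $r$ equation-rows at the point $x_i$. This is handled by the fact that the scaling factor $2^{-1/2}$ is uniform in the equation index $j$, so the row-by-row analysis of Theorem \ref{thm:inc} transfers intact. A secondary concern, preserving non-singularity of critical points under $\phi$, is resolved by the same Jacobian-rank argument, since $\phi$ is a linear embedding sending smooth points of $C_{m, n}$ to smooth points of $C_{m, n+1}$ (its image is cut out by the same block-duplicated relations, whose Jacobian has full rank whenever the original does).
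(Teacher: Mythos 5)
Your proposal is correct and takes essentially the same approach as the paper: the paper defines the same map $\phi$ (applying $i$ to each $x_i$ and, coordinate-block by coordinate-block, the duplication-and-scaling map $\psi$ to each $\beta_{j,-}$), notes $C_{m,n}$ is a complete intersection via Proposition \ref{prop:ci}, and concludes by declaring the argument ``analogous to the proof of Theorem \ref{thm:inc}'' and invoking Lemma \ref{lem:count}. Your column-by-column bookkeeping is exactly the content of that implicit analogy, and your observation that the $2^{-1/2}$ scaling is uniform in the equation index $j$ is the right reason the single-equation argument transfers to the system.
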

\begin{proof}
	We use the same map $i\colon \Bbb C^n \hookrightarrow \Bbb C^{n+1}$ as in the proof of Theorem \ref{thm:inc}. Let $\phi  \colon \Bbb C^{mn+r(2n+1)} \hookrightarrow \Bbb C^{m(n+1)+r(2(n+1)+1)}$ be the map that takes $$(x_1, \dots, x_m, \beta_{1, -}, \dots, \beta_{r, -}) \mapsto (i(x_1), \dots, i(x_m), \psi(\beta_{1, -}), \dots, \psi(\beta_{r, -}))$$
	where $\beta_{j, -}$ denotes the tuple $(\beta_{j, 0}, \dots, \beta_{j, 2n})$ and $\psi \colon \Bbb C^{2n+1} \to \Bbb C^{2(n+1)+1}$ takes
	$$(\beta_0, \dots, \beta_{2n}) \mapsto (\beta_0, \dots, \beta_n, \beta_n, \beta_{n+1}, \dots, 2^{-1/2}\beta_{2n}, 2^{-1/2}\beta_{2n})$$
	Since $f_\beta$ is a complete intersection for a generic $\beta\in \Bbb C^k$ it follows from Proposition \ref{prop:ci} that $C_{m, n}$ is a complete intersection. The argument is then analogous to the proof of Theorem \ref{thm:inc}  and the statement follows from Lemma \ref{lem:count}.
\end{proof}

\section{Results for prescribed classes}\label{sec:4}
In this section we investigate the algebraic complexity of a number of classes of varieties. 

\subsection{Linear $\beta$}\label{sec:linear} Suppose that each polynomial in the system $f_\beta \colon \Bbb C^n \to \Bbb C^r$ has coefficients which are linear in $\beta$. Without loss of generality we may assume that $f_\beta$ is the system:
\begin{equation}\label{eq:linearsys}
\begin{aligned}
	\beta_{11} x^{\textbf{a}_{11}} + \beta_{12} x^{\textbf{a}_{12}} +\dots \beta_{1k_1} x^{\textbf{a}_{1k_1}}  &= 0\\
	\beta_{21} x^{\textbf{a}_{21}} + \beta_{22} x^{\textbf{a}_{22}} +\dots \beta_{2k_2} x^{\textbf{a}_{2k_2}} &= 0\\
	\vdots& \\
	\beta_{r1} x^{\textbf{a}_{r1}} + \beta_{r2} x^{\textbf{a}_{r2}} +\dots \beta_{rk_r} x^{\textbf{a}_{rk_r}} &= 0
\end{aligned}
\end{equation}

where $\textbf{a}_{11}, \dots, \textbf{a}_{rk_r} \in \Bbb N^n$ and $\beta \in \Bbb C^{\sum_{i=1}^r k_i}$. Note that for the projection $\pi\colon C_{m, n}\to \HH_{m, n}$ to be generically finite $f_\beta$ cannot be homogeneous in $\beta$. Therefore we need to let $\beta_{ik_i} = \sum_{j=1}^{k_i-1} \alpha_{ij}\beta_{ji}+\gamma_i$ for some generic $\alpha_{ij}, \gamma_i \in \Bbb C$, i.e. we choose a generic affine patch to de-homogenize the equations. 

Let $\textbf{a} := \{ \textbf{a}_1, \dots, \textbf{a}_k \}$ and define $U_{\textbf{a}}(x)$ to be the $m\times k$ submatrix of the Vandermonde matrix consisting of the columns corresponding to the monomials $x^{\textbf{a}_i}$ for each $1\leq i\leq k$. Note that if $f_\beta$ is the class of hyperplanes in $\Bbb C^n$, then $U_{\textbf{a}}$ equals the complete Vandermonde matrix of degree one. 
\begin{prop}\label{prop:princ}
	If $r=1$, then the hypothesis variety of $f_\beta$ is cut out by the principal minors of $U_{\textbf{a}}$. 
\end{prop}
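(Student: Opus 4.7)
The plan is to linearize the incidence relation in $\beta$, turn ``$(x,\beta)\in C_{m,n}$'' into a linear-algebra condition on the Vandermonde-type matrix $U_{\textbf{a}}(x)$, and then read off the defining equations of the image $\pi(C_{m,n})\subset\mathbb{C}^{mn}$ as a determinantal locus.

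First I would unfold the meaning of the incidence variety in the case $r=1$. A single polynomial $f_\beta(y)=\sum_{j=1}^{k}\beta_j\, y^{\textbf{a}_j}$ is, by hypothesis, \emph{linear} in $\beta$; so for any fixed configuration $x=(x_1,\dots,x_m)$ the $m$ conditions $f_\beta(x_1)=\dots=f_\beta(x_m)=0$ are precisely the homogeneous linear system
\[
U_{\textbf{a}}(x)\,\beta \;=\; 0,
\]
where $U_{\textbf{a}}(x)$ is the $m\times k$ matrix whose $(i,j)$-entry is $x_i^{\textbf{a}_j}$. Thus $C_{m,n}$ is the graph inside $\mathbb{C}^{mn}\times\mathbb{C}^{k}$ of this linear system, intersected with the affine patch $\beta_k=\sum_{j<k}\alpha_j\beta_j+\gamma$ used in (\ref{eq:linearsys}) to de-homogenize.

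Next I would compute $\pi(C_{m,n})$ as a subset of $\mathbb{C}^{mn}$. A configuration $x$ lies in the image iff the inhomogeneous system obtained from $U_{\textbf{a}}(x)\beta=0$ by substituting the affine-patch equation admits a solution. The key observation is that replacing $\beta_k$ by $\sum_{j<k}\alpha_j\beta_j+\gamma$ is a change of basis on the $\beta$-side: it corresponds to multiplying $U_{\textbf{a}}(x)$ on the right by an invertible $k\times k$ matrix and picking out one coordinate, so the solvability of the resulting affine system does not change the rank of $U_{\textbf{a}}(x)$. For generic choice of the patch parameters $\alpha_j,\gamma$, any non-zero element of $\ker U_{\textbf{a}}(x)$ can be rescaled to meet the affine constraint, and conversely a solution in the patch produces a non-zero kernel element. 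Hence, for generic $x$,
\[
x\in\pi(C_{m,n}) \iff \ker U_{\textbf{a}}(x)\neq 0 \iff \operatorname{rank}U_{\textbf{a}}(x)<k.
\]

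Finally, when $m\ge k$ the rank-drop locus $\{\operatorname{rank}U_{\textbf{a}}<k\}$ is the classical determinantal variety cut out by the vanishing of every $k\times k$ (maximal, i.e.\ the paper's ``principal'') minor of $U_{\textbf{a}}$; when $m<k$ the rank is automatically $<k$, there are no $k\times k$ minors, and $\mathcal{H}_{m,n}=\mathbb{C}^{mn}$, matching the empty ideal. Taking the Zariski closure, as required by Definition \ref{def:hypvar}, does not enlarge this locus because it is already closed. This yields the stated description of $\mathcal{H}_{m,n}$.

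The step I expect to be delicate is the second one: verifying that the de-homogenization does not introduce spurious equations beyond the rank condition, i.e.\ that a generic affine patch on the $\beta$-side meets $\ker U_{\textbf{a}}(x)$ nontrivially whenever the kernel is nonzero. Everything else is either the definition of $C_{m,n}$ unwound for linear $\beta$ or a standard determinantal description of the matrix rank locus.
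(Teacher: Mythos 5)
Your proposal is correct and takes essentially the same route as the paper's two-line argument: for $r=1$ the incidence conditions $f_\beta(x_i)=0$ become the homogeneous linear system $U_{\textbf{a}}(x)\beta=0$, so membership in $\pi(C_{m,n})$ is equivalent to a non-trivial kernel of $U_{\textbf{a}}(x)$, which is the vanishing of the maximal (``principal'') minors. You are in fact a bit more careful than the paper, explicitly handling the generic affine de-homogenization of $\beta$ and the passage to Zariski closure, both of which the paper's proof tacitly glosses over.
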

\begin{proof}
	Suppose that $x\in \HH_{m, n}$, then there exists a $\beta$ such that $f_\beta(x_i) = 0$ for all $1\leq i\leq m$. This means that $\beta \in \ker{U_{\textbf{a}}(x)}$ and consequently that the principal minors of $U_{\textbf{a}}(x)$ vanish. 
	
	Conversely, suppose that the principal minors of $U_{\textbf{a}}(x)$ vanish. Then the kernel of $U_{\textbf{a}}(x)$ is non-trivial and there exists coefficients $\beta$ such that $f_\beta(x) = 0$, which means that $x\in \HH_{m, n}$.
\end{proof}
\begin{cor}
	If $r>1$, then the hypothesis variety of $f_\beta$ is cut out by the intersection of the  principal minors of $U_{\textbf{a}_1}, \dots , U_{\textbf{a}_r}$, where $\textbf{a}_i := \{\textbf{a}_{i1}, \dots, \textbf{a}_{ik_i}\}$.
\end{cor}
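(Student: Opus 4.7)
The plan is to reduce the multi-equation case $r>1$ to $r$ independent applications of Proposition \ref{prop:princ}. The decisive structural observation is that in the system (\ref{eq:linearsys}), the $\beta$-variables appearing in the $i$th equation, namely $\beta_{i1},\dots,\beta_{ik_i}$, are disjoint from those appearing in the other equations. Consequently, the existential quantifier over $\beta\in\Bbb C^{\sum k_i}$ defining $\HH_{m,n}$ decouples into $r$ independent existential quantifiers, one per row of the system.

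More precisely, I would argue as follows. By the definition of the hypothesis variety, a point $x\in\Bbb C^{mn}$ lies in $\HH_{m,n}$ if and only if there exists $\beta$ such that $f_{\beta,i}(x_j)=0$ for every $1\leq i\leq r$ and every $1\leq j\leq m$ (I still need to take Zariski closure, but the cited argument from Proposition \ref{prop:princ} already handles that implicitly by working with the kernel condition, which is a closed condition on $x$). Because the $i$th row of (\ref{eq:linearsys}) involves only the variables $\beta_{i,\cdot}$, this is equivalent to the conjunction, over $i=1,\dots,r$, of the statements: there exist $\beta_{i1},\dots,\beta_{ik_i}$ such that $f_{\beta,i}(x_j)=0$ for every $j$. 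Each of these $r$ statements is precisely the hypothesis-variety condition for the single-row subsystem with coefficient vector $\textbf{a}_i$.

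Applying Proposition \ref{prop:princ} to each of these $r$ single-row subsystems then identifies the $i$th condition with the simultaneous vanishing of the principal (that is, $k_i\times k_i$) minors of $U_{\textbf{a}_i}(x)$. Taking the conjunction over $i$ shows that $\HH_{m,n}$ equals the intersection of the $r$ zero loci, and equivalently that it is cut out by the union of all the principal minors of $U_{\textbf{a}_1}(x),\dots,U_{\textbf{a}_r}(x)$.

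I expect no real obstacle here: the proof is essentially a bookkeeping exercise that exploits the block-diagonal structure of the Jacobian in $\beta$ (compare Proposition \ref{prop:ci}). The only subtle point is the de-homogenization prescription $\beta_{ik_i}=\sum_j \alpha_{ij}\beta_{ji}+\gamma_i$ needed to make $\pi$ generically finite; but this affine patch affects only which representative $\beta$ is chosen for a given $x$, not whether some $\beta$ exists, and so it does not alter the set-theoretic description of $\HH_{m,n}$ by the principal minors of the $U_{\textbf{a}_i}$.
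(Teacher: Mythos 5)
Your proof is correct and follows the same route as the paper's: apply Proposition \ref{prop:princ} row by row and intersect. The paper's own two-sentence proof leaves implicit the decoupling step you make explicit (that the disjointness of the $\beta$-blocks across rows lets the existential quantifier factor), and you also address the de-homogenization subtlety, which the paper's proof passes over; both additions are correct observations rather than departures from the paper's argument.
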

\begin{proof}
	By Proposition \ref{prop:princ} it follows that the system of the $i$th equations of $f_\beta$ for all $x_1, \dots, x_m$ is cut out by the principal minors of $U_{\textbf{a}_i}$. Consequently, the total system is cut out by the intersection of the principal minors of $U_{\textbf{a}_1}, \dots , U_{\textbf{a}_r}$.
\end{proof}
If $\textbf{a}_{1i} = \textbf{a}_{2i} = \dots = \textbf{a}_{ri}$ the above corollary implies the following result:
\begin{cor}\label{cor:minors}
	If $r>1$ and $k=k_1 = \dots = k_r$ and $\textbf{a}_{1i} = \textbf{a}_{2i} = \dots = \textbf{a}_{ri}$ for all $1\leq i \leq k$. Then the hypothesis variety of $f_\beta$ is cut out by the $(k-r+1)\times (k-r+1)-$minors of $U_{\textbf{a}}$.
\end{cor}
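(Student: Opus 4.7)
The plan is to reduce the characterization of $\HH_{m,n}$ to a rank condition on the single matrix $U_{\textbf{a}}(x)$ and then translate that rank condition into the vanishing of minors of the prescribed size. The key simplification offered by the hypothesis $\textbf{a}_{1i} = \dots = \textbf{a}_{ri}$ is that the $r$ submatrices $U_{\textbf{a}_1}(x), \dots, U_{\textbf{a}_r}(x)$ all coincide with a single $m \times k$ matrix $U_{\textbf{a}}(x)$. Consequently, the system of $rm$ scalar equations $f_\beta(x_1) = \dots = f_\beta(x_m) = 0$ repackages into the single block matrix equation
$$U_{\textbf{a}}(x)\, B^{\top} = 0,$$
where $B \in \Bbb C^{r \times k}$ is the coefficient matrix whose $j$-th row is $(\beta_{j1}, \dots, \beta_{jk})$.

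Next I would argue that $x \in \HH_{m,n}$ if and only if there exists such a $B$ of rank $r$. The rank-$r$ requirement is forced because, as used throughout Section \ref{sec:3}, we work with the component of $C_{m,n}$ dominating the parameter space, where a generic $\beta$ yields $r$ linearly independent polynomials $f_{\beta_j}$. Elementary linear algebra now finishes the argument: a rank-$r$ matrix $B$ with $U_{\textbf{a}}(x) B^{\top} = 0$ exists if and only if $\dim \ker U_{\textbf{a}}(x) \ge r$, i.e., $\mathrm{rank}\, U_{\textbf{a}}(x) \le k - r$, which in turn is cut out set-theoretically by the vanishing of all $(k-r+1) \times (k-r+1)$ minors of $U_{\textbf{a}}(x)$. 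The forward inclusion is immediate from a witness $B$; the reverse inclusion extracts $r$ linearly independent vectors from $\ker U_{\textbf{a}}(x)$ to assemble $B$ and hence $\beta$.

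The main obstacle is verifying that the $r$ extracted kernel vectors can be adjusted to meet the affine dehomogenization $\beta_{ik_i} = \sum_j \alpha_{ij}\beta_{ij}+\gamma_i$ built into the parametrization of $C_{m,n}$. Because the constants $\alpha_{ij}, \gamma_i$ are chosen generically, each of the $r$ affine hyperplanes meets a generic $r$-frame in $\ker U_{\textbf{a}}(x)$ transversally in a non-empty affine slice; hence the lift from the minor locus back into $\pi(C_{m,n})$ exists on a Zariski-open subset of the minor variety. The exceptional locus has strictly higher codimension and is absorbed by the closure in the definition of $\HH_{m,n}$, yielding the stated equality.
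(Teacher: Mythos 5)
Your proof takes a genuinely different, and in fact more defensible, route than what the paper offers. The paper states that the preceding corollary (that $\HH_{m,n}$ is cut out by intersecting the principal minor loci of $U_{\textbf{a}_1},\dots,U_{\textbf{a}_r}$) ``implies'' the present one, but that implication does not go through literally: when all the $\textbf{a}_i$ coincide, that corollary would say $\HH_{m,n}$ is cut out by the principal (i.e.\ $k\times k$) minors of the single matrix $U_{\textbf{a}}(x)$, which is the locus $\operatorname{rank} U_{\textbf{a}}(x)\le k-1$, not the claimed locus $\operatorname{rank} U_{\textbf{a}}(x)\le k-r$. Your direct rank-nullity argument --- packaging the system as $U_{\textbf{a}}(x)B^{\top}=0$ with $B\in\Bbb C^{r\times k}$ and observing that a rank-$r$ solution exists precisely when $\dim\ker U_{\textbf{a}}(x)\ge r$, i.e.\ when the $(k-r+1)\times(k-r+1)$ minors vanish --- is the correct linear-algebraic content, and it supplies the argument the paper omits.

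Your argument also makes visible the one assumption both you and the paper are tacitly relying on: that the relevant component of $C_{m,n}$ (equivalently, the definition of $\HH_{m,n}$) should only see $B$ of full rank $r$, so that $V_\beta$ is genuinely of codimension $r$. Taken at face value, Definition \ref{def:hypvar} would also admit degenerate $\beta$ with, say, $\beta_1=\dots=\beta_r$, and the corresponding $x$ would only need $\operatorname{rank} U_{\textbf{a}}(x)\le k-1$. You flag this by invoking the ``dominating component'' convention of Section \ref{sec:3}; this is the right place to hang the assumption, but it would be worth stating explicitly that the corollary is about the component of $C_{m,n}$ on which the rows of $B$ are independent (equivalently, on which the zero locus of $f_\beta$ is a codimension-$r$ complete intersection, as in Proposition \ref{prop:ci}). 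With that caveat spelled out, your dehomogenization step is also fine: once $\dim\ker U_{\textbf{a}}(x)\ge r$, a generic affine slice of each kernel still has dimension $\ge r-1$ and, not containing the origin, yields $r$ linearly independent representatives, so the lift exists on a dense open subset and the discrepancy is absorbed by the Zariski closure.
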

Let $f_\beta$ be the class of codimension $\leq r$ affine linear subspaces in $\Bbb C^n$. Note that $\pi$ is generically one-to-one. We can then show that Conjecture \ref{conj:eq} holds in this case:
\begin{thm}\label{thm:gene}
	If $m>n$, then
	$$EDdegree(\HH_{m, n}) = \beta EDdegree(C_{m, n})$$
\end{thm}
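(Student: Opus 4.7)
The plan is to upgrade the inequality of Theorem \ref{thm:ineq} to an equality by constructing, for a generic configuration $p \in \Bbb C^{mn}$, a bijection between critical points of $\HH_{m,n}$ with respect to $p$ and critical points of $C_{m,n}$ with respect to $(p, 0)$. The lifting direction from $\HH_{m,n}$ to $C_{m,n}$, sending $x$ to its unique preimage $(x, \pi^{-1}(x))$, is already contained in the proof of Theorem \ref{thm:ineq} and is injective since $\pi$ is generically one-to-one. The remaining task is the reverse: show that every critical point of $C_{m,n}$ descends via $\pi$ to a critical point of $\HH_{m,n}$.

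By Corollary \ref{prop:cond}, descent is automatic at any $(x, \beta) \in C_{m,n}$ where $\nabla_\beta C_{m,n}(x, \beta)$ has full column rank $k$. Let $S \subset C_{m,n}$ denote the locus where this rank drops. Applying Lemma \ref{lem:finite} to the hypothesis that $\pi$ is generically one-to-one gives $\text{rank}\,\nabla_\beta C_{m,n} = k$ on a Zariski-open subset of $C_{m,n}$, so $S$ is a proper closed subvariety. The hypothesis $m > n$ enters here, since it is precisely what forces the Vandermonde-type rank condition from Corollary \ref{cor:minors} and Example \ref{ex:hyp} to be generic rather than identically degenerate on $C_{m,n}$; without it $\pi$ would already fail to be generically finite.

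Next I would run a dimension count on the ED-correspondence $\mathcal E_C \subset C_{m,n} \times \Bbb C^{mn}$ to push generic $p$ off of the projection of $\mathcal E_C|_S$. Since $f_\beta$ is affine linear and generically irreducible, Proposition \ref{prop:ci} makes $C_{m,n}$ an irreducible complete intersection, and the discussion preceding Lemma \ref{lem:finite} then gives that $\mathcal E_C$ is an irreducible variety of dimension $mn$. The restriction $\mathcal E_C|_S = \sigma_1^{-1}(S)$ is therefore a proper closed subvariety and has dimension strictly less than $mn$, so its image in the $p$-factor is a proper closed subset of $\Bbb C^{mn}$. For any $p$ outside this subset, every critical point of $C_{m,n}$ with respect to $(p, 0)$ lies in $C_{m,n} \setminus S$, Corollary \ref{prop:cond} delivers the descent, and combined with the injective lifting from Theorem \ref{thm:ineq} this yields the desired bijection and the equality of degrees.

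The main obstacle is packaging the irreducibility and dimension of $\mathcal E_C$, which in the existing framework requires $\pi$ to be generically finite together with the fiber-dimension compatibility $\dim \sigma_1^{-1}(p) = \dim \tau_1^{-1}(p)$ from the discussion around Lemma \ref{lem:finite}. Once these are in place the proof reduces to the ED-correspondence dimension count above, but if either hypothesis fails then $S$ could dominate $\Bbb C^{mn}$ under projection and the argument would collapse.
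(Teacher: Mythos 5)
There is a genuine gap, and it is instructive to see why: your argument proves too much.

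The crux of your approach is the claim that $\mathcal{E}_C$ is irreducible of dimension $mn$, so that $\sigma_1^{-1}(S)$ has dimension strictly less than $mn$ and its image under $\rho$ misses a generic $p$. But the paper's discussion preceding Lemma \ref{lem:finite} only really establishes that the component of $\mathcal{E}_C$ dominating $C_{m,n}$ (the closure of the vector-bundle piece over $C_{m,n}\setminus S$) has dimension $mn$; over the rank-drop locus $S$ the fibres of $\sigma_1$ jump up (because $N_{(x,\beta)}(C_{m,n})\cap(\Bbb C^{mn}\times\{0\})$ becomes larger when $\nabla_\beta$ loses rank), and $\sigma_1^{-1}(S)$ may sit inside a separate component of $\mathcal{E}_C$ of full dimension $mn$. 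In that case $\rho(\sigma_1^{-1}(S))$ is dense and the dimension count gives you nothing.

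You can see this failure concretely by looking at $m=n$, which your argument does not exclude: you claim the hypothesis $m>n$ only enters via generic finiteness, but for the hyperplane class ($r=1$, $k=n$) the map $\pi$ is already generically finite at $m=n$, and yet the conclusion is false there. Take $m=n=2$: $\HH_{2,2}=\Bbb C^4$ so $EDdegree(\HH_{2,2})=1$, but working out the Lagrange system for $C_{2,2}$ produces a second critical point $a=b=(p+q)/2$ for generic $p,q$, sitting exactly on $S$ (both rows of $\nabla_\beta$ coincide when $a=b$). So $\beta EDdegree(C_{2,2})=2>1$, consistent with the paper's remark about $m=k$, and your dimension-counting argument would have ruled this out.

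What the paper's proof supplies that yours is missing is a geometric bound on where the bad configurations can live. If $\text{rank}\,\nabla_\beta C_{m,n}(x,\beta)<k$, then $x_1,\dots,x_m$ lie in $V_\beta\cap H$ for some hyperplane $H$, a codimension-$(r+1)$ affine subspace, while each $p_i$ lies in $x_i + N(V_\beta)$, an $r$-dimensional affine space; hence every $p_i$ lies in the $(n-1)$-dimensional affine space $(V_\beta\cap H)+N(V_\beta)$, a hyperplane of $\Bbb C^n$. This shows $\rho(\sigma_1^{-1}(S))$ is contained in the locus of configurations lying on a common hyperplane of $\Bbb C^n$, which is proper closed if and only if $m>n$. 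That is where $m>n$ actually enters, and it is the step your abstract argument cannot replace.
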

\begin{proof}
	We will use Proposition \ref{prop:cond} and show that the differential $d\pi$ is surjective on all critical points for a generic configuration $p\in \Bbb C^{mn+k}$. Recall that $V_\beta\subseteq \Bbb C^n$ is the variety of $f_\beta$ for a specific $\beta$. For $x$ to be critical to $p$ we must have that $p_i$ lie in the normal space of $x_i$ on $V_\beta$ for some fix $\beta\in \Bbb C^k$ for all $1\leq i \leq m$. Consider the $\beta$-part of the Jacobian matrix of $C_{m, n}$. If $\text{rank}\, \nabla_\beta C_{m, n}(x, \beta) < k$, then $x$ must lie on the intersection of $V_\beta$ with a hyperplane, which is a codimension $r+1$ affine linear subspace. Note that the normal space of this intersection on $V_\beta$ is a hyperplane in $\Bbb C^n$. We must thus have that $p$ lie on this normal space, which is a  hyperplane. Since $p$ is generic this cannot happen unless any choice of $p$ lie on a hyperplane, which happens when $m\leq n$. 
\end{proof}
In this case it holds that $U_{\textbf{a}_i} = U_1$ for $1\leq i\leq r$. Thus by Corollary \ref{cor:minors} it follows that the EDdegree of the hypothesis variety of $f_\beta$ is obtained by the following generalization of the Eckart-Young Theorem: 
\begin{thm}[\cite{edd}]
	If $m>n$, then
$$EDdegree(\HH_{m, n}) = {n \choose r}$$
\end{thm}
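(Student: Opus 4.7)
The plan is to recognize $\HH_{m,n}$ as (a fiber bundle over the centroid space whose fiber is) a classical determinantal variety and then reduce to the Eckart-Young theorem for low-rank matrix approximation. First I would unpack Corollary \ref{cor:minors}: since each linear equation in $f_\beta$ has $k = n+1$ coefficients and $\textbf{a}_{1i} = \dots = \textbf{a}_{ri}$ runs through the monomials $1, x_1, \dots, x_n$, the hypothesis variety $\HH_{m,n}$ is cut out by the $(n-r+2)\times(n-r+2)$-minors of the degree-$1$ Vandermonde matrix $U_1(x) = [\mathbf{1} \mid X]$, where $X$ is the $m\times n$ coordinate matrix of the configuration $x$. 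This is equivalent to the condition that $X - \mathbf{1}\bar{x}^T$ has rank $\leq n - r$, where $\bar{x} = \tfrac{1}{m}\mathbf{1}^T X$ is the centroid, since that is precisely when the affine span of the $x_i$ has codimension $\geq r$.

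Next I would decompose the Euclidean distance orthogonally according to the splitting $\Bbb C^{mn} = \W \oplus \W^\perp$, where $\W$ is the $(m-1)n$-dimensional subspace of row-centered matrices and $\W^\perp = \{\mathbf{1} v^T : v\in \Bbb C^n\}$. For any $p,x \in \Bbb C^{mn}$,
$$\|p-x\|^2 \;=\; \|p_0 - x_0\|^2 + m\,\|\bar{p}-\bar{x}\|^2,$$
where the subscript $0$ denotes the centered part. The constraint $x \in \HH_{m,n}$ only involves $x_0$, so minimizing over the centroid $\bar{x}$ forces $\bar{x} = \bar{p}$ at any critical point, and the critical $x_0$ are precisely the critical rank-$(n-r)$ approximations of $p_0$ in the Frobenius norm.

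Finally I would apply the classical Eckart-Young theorem from \cite{edd}. After a Helmert-type orthonormal change of row-coordinates, $p_0$ becomes a generic $(m-1)\times n$ matrix, and the hypothesis $m > n$ gives $m - 1 \geq n$. The number of critical rank-$(n-r)$ approximations of a generic such matrix is $\binom{n}{n-r} = \binom{n}{r}$, matching the SVD picture described in Example \ref{ex:hyp}. This yields the stated formula.

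The main technical point to verify is that a generic $p \in \Bbb C^{mn}$ actually produces a $p_0$ whose singular values are all nonzero and pairwise distinct, so that the classical count applies cleanly; this holds on a Zariski-open subset of configurations, and the condition $m > n$ is what rules out the degenerate regime where $p_0$ is forced to have fewer than $n$ nonzero singular values. As consistency checks: $r=1$ recovers Example \ref{ex:hyp} (EDdegree $= n$), and $r = n$ gives $1$, reflecting that codimension-$n$ affine subspaces are single points and the unique closest constant configuration is $\mathbf{1}\bar{p}^T$.
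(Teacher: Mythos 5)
Your proof is correct and follows the route the paper intends: use Corollary \ref{cor:minors} to exhibit $\HH_{m,n}$ as a determinantal variety, then invoke the generalized Eckart--Young theorem of \cite{edd}. The paper itself only states the result with a citation and does not spell out the reduction; your centering/Helmert isometry is precisely the needed bridge from the affine Vandermonde condition $\mathrm{rank}[\mathbf{1}\mid X]\leq n+1-r$ to the rank-$(n-r)$ determinantal variety in $\Bbb C^{(m-1)\times n}$, and the hypothesis $m>n$ enters exactly where you say it does (to guarantee $m-1\geq n$ so that the count $\binom{n}{n-r}=\binom{n}{r}$ applies).
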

Thus for codimension $r$ affine linear subspaces we have a closed formula for the algebraic complexity.  

%
%
%
%


\subsection{Spheres} \label{sec:spheres}
Let $f_\beta$ be the polynomial
$$f_\beta(x) = \sum_{i=1}^n (x_i-\beta_i)^2 - \beta_0$$
Then $f_\beta$ is the class of all $(n-1)-$spheres in $\Bbb C^n$. The projection $\pi\colon C_{m, n} \to \HH_{m, n}$ is generically finite if $m\geq n+1$, in fact, it is generically one-to-one. Note that if we would replace $\beta_0$ with $\beta_0^2$ the map would be generically two-to-one. This would simply just double the number of critical configurations of $C_{m, n}$ to a configuration $p\in \Bbb C^{mn}$. 

Also in this case we can prove that Conjecture \ref{conj:eq} holds:
\begin{thm}\label{thm:eq}
	If $m>n+1$ then, $$EDdegree(\HH_{m, n}) = \beta EDdegree(C_{m, n})$$
\end{thm}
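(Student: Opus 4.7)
The plan is to apply Corollary \ref{prop:cond} along the same lines as the proof of Theorem \ref{thm:gene}. By Theorem \ref{thm:ineq} we already have $EDdegree(\HH_{m,n}) \leq \beta EDdegree(C_{m,n})$, and since $\pi\colon C_{m,n}\to \HH_{m,n}$ is generically one-to-one, it suffices to exhibit, for a generic $p\in \Bbb C^{mn}$, an injection from critical points of $C_{m,n}$ with respect to $(p,0)$ into critical points of $\HH_{m,n}$ with respect to $p$. Corollary \ref{prop:cond} supplies such a map on the open locus where $\text{rank}\,\nabla_\beta C_{m,n}(x,\beta)=n+1$, so the entire argument reduces to showing that this full-rank condition holds at every critical point of $C_{m,n}$ with respect to a generic $(p,0)$.

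A direct computation gives $\nabla_\beta f_\beta(x_j)=(-1,\,2(\beta_1-x_{j,1}),\,\ldots,\,2(\beta_n-x_{j,n}))$, so the $m\times(n+1)$ matrix $\nabla_\beta C_{m,n}(x,\beta)$ has rank less than $n+1$ if and only if the points $x_1,\ldots,x_m$ lie in a common affine hyperplane of $\Bbb C^n$; in that situation the $x_j$'s lie on the intersection $V_\beta\cap H$, which is an $(n-2)$-sphere inside $H$. Let $W\subseteq C_{m,n}$ denote the closed subvariety cut out by this degeneracy. Since $m>n+1$, a generic $(n-1)$-sphere admits configurations of $m$ points that affinely span $\Bbb C^n$, so $W$ is a proper subvariety of $C_{m,n}$.

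The next step is to exclude $W$ from the critical locus for generic $p$ via the ED-correspondence $\mathcal{E}_C\subseteq C_{m,n}\times\Bbb C^{mn}$, which by the discussion preceding Lemma \ref{lem:finite} is irreducible of dimension $mn$ and dominates $\Bbb C^{mn}$ under the projection $\rho$ onto the second factor. The restriction $\mathcal{E}_C\cap(W\times\Bbb C^{mn})$ is a proper closed subvariety: inside the main component of $\mathcal{E}_C$, the hyperplane condition is cut out by the vanishing of a single $(n+1)\times(n+1)$-minor, and it is not identically satisfied on that component, since any smooth $(x,\beta)\in C_{m,n}\setminus W$ produces a point $(x,\beta,\pi(x,\beta))\in\mathcal{E}_C$ (taking all Lagrange multipliers $\lambda_j=0$) for which the $x_j$'s affinely span $\Bbb C^n$. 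Hence $\mathcal{E}_C\cap(W\times\Bbb C^{mn})$ has dimension at most $mn-1$, and its image under $\rho$ is a proper constructible subset of $\Bbb C^{mn}$. For $p$ outside this image, every critical point $(x,\beta)$ of $C_{m,n}$ with respect to $(p,0)$ satisfies $\text{rank}\,\nabla_\beta C_{m,n}(x,\beta)=n+1$, so by Corollary \ref{prop:cond} it descends to a critical point $\pi(x,\beta)=x$ of $\HH_{m,n}$ with respect to $p$. Distinct critical points of $C_{m,n}$ descend to distinct critical points of $\HH_{m,n}$ because $\pi$ is generically one-to-one and the critical set is finite (hence generically positioned), yielding the reverse inequality $\beta EDdegree(C_{m,n})\leq EDdegree(\HH_{m,n})$ and thus equality.

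The main obstacle will be rigorously handling the dimension count for $\mathcal{E}_C\cap(W\times\Bbb C^{mn})$, in particular verifying that no exceptional component of $\mathcal{E}_C$ is entirely contained in $W\times\Bbb C^{mn}$ and that the full-rank locus really is non-empty in each component intersecting the critical set. A companion technicality is confirming that the generic one-to-oneness of $\pi\colon C_{m,n}\to \HH_{m,n}$ transfers to the zero-dimensional critical fibre for generic $p$; both statements are routine upper-semicontinuity arguments for fibres of $\pi$ and $\rho$, and follow from the irreducibility assertions already established for $C_{m,n}$ and $\mathcal{E}_C$.
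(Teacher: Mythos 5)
Your overall strategy matches the paper's: both start from Corollary \ref{prop:cond}, identify the degeneracy locus $W\subseteq C_{m,n}$ where $\nabla_\beta C_{m,n}$ drops rank as exactly the configurations $x$ that lie in a common hyperplane (equivalently, on an $(n-2)$-sphere inside $V_\beta$), and then argue that for generic $p$ no critical point lies in $W$. The paper carries out this last step with a direct geometric argument: the union of normals to an $(n-2)$-sphere is a cone, a cone is determined by $n+1$ of its points, so for $m>n+1$ a generic configuration $p$ cannot lie on such a cone. You instead attempt a dimension count on the ED-correspondence.

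There is a genuine gap in your dimension count, and it is exactly the step you set aside as ``a routine upper-semicontinuity argument.'' You claim that $\mathcal{E}_C\cap(W\times\Bbb C^{mn})$ has dimension at most $mn-1$ because $\mathcal{E}_C$ is irreducible of dimension $mn$ and $W$ is a proper subvariety of $C_{m,n}$. But the fibre of $\sigma_1\colon\mathcal{E}_C\to C_{m,n}$ over a point $(x,\beta)$ is the affine-linear set $x+\bigl(N_{(x,\beta)}C_{m,n}\cap(\Bbb C^{mn}\times\{0\})\bigr)$, and its dimension is $m-(n+1)+s$ where $s$ is the rank defect of $\nabla_\beta C_{m,n}(x,\beta)$. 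Over $W$ (generically $s=1$) this jumps from $m-n-1$ to $m-n$, so $\mathcal{E}_C$ as defined acquires a separate piece lying over $W$ of dimension
\[
\dim W + (m-n) \;=\; \bigl(2n+1+m(n-2)\bigr) + (m-n) \;=\; mn+n+1-m.
\]
This is strictly less than $mn$ precisely when $m>n+1$, and equal to $mn$ when $m=n+1$. Your argument only invokes $m>n+1$ to conclude that $W$ is a proper subvariety of $C_{m,n}$, a fact that already holds for $m\geq n+1$; as written, the proposal would therefore also ``prove'' the theorem for $m=n+1$, contradicting the paper's own example $EDdegree(\HH_{3,2})=1$ while $\beta EDdegree(C_{3,2})=4$. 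The excess-fibre dimension count above (or, equivalently, the paper's observation that the normal cone of the $(n-2)$-sphere is determined by $n+1$ points) is not a side technicality but is precisely the place where the hypothesis $m>n+1$ does its work; without it the image $\rho\bigl(\mathcal{E}_C\cap(W\times\Bbb C^{mn})\bigr)$ can be all of $\Bbb C^{mn}$ and degenerate critical points cannot be excluded for generic $p$.
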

\begin{proof}
	We will use Proposition \ref{prop:cond} and show that the differential $d\pi$ is surjective on all critical points for a generic configuration $p\in \Bbb C^{mn+k}$. Consider the $\beta$-part of the Jacobian matrix of $C_{m, n}$:
	$$\nabla_\beta C_{m, n} = \begin{bmatrix}
	2(x_{11}-\beta_1) & \dots & 2(x_{1n}-\beta_n) & -1  \\
	2(x_{21}-\beta_1) & \dots & 2(x_{2n}-\beta_n) & -1 \\
	\vdots & \dots & \vdots & \vdots\\
	2(x_{m1}-\beta_1) & \dots & 2(x_{mn}-\beta_n) & -1 
	\end{bmatrix}
	$$
	Note that for this matrix to have a non-trivial kernel, we must have that $x_1, \dots, x_m$ lie on a hyperplane in $\Bbb C^n$. This means that $x_1, \dots, x_m$ lie on the intersection of a $(n-1)-$sphere and a hyperplane, i.e. an $(n-2)-$sphere. For $x$ to be critical to $p$ we must have that $p_i$ lie in the normal space of $x_i$ on the sphere. Note that the normal space of this $(n-2)-$sphere on $S$ is a cone in $\Bbb C^n$ and we must have that $p$ lie on this cone. For any $n$ points in $\Bbb C^{n+1}$ there is a $(n-2)$-sphere passing through them and then the last point determines the direction of the cone. Thus for $m>n+1$ there is no cone passing through a generic $p$, since the cone is fully determined by $n+1$ points. Consequently, any critical $x_1, \dots, x_m$ do not  lie on a hyperplane for a generic $p$ for $m>n+1$ and thus the result follows.
\end{proof}
By the above theorem we note that we can replace $C_{m, n}$ by $\HH_{m, n}$ in Theorem \ref{thm:inc}. We thus have the following corollary:
\begin{cor}
	$$EDdegree(\HH_{m, n}) \leq EDdegree(\HH_{m, n+1})$$
\end{cor}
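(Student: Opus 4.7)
The plan is to chain Theorem \ref{thm:inc} with Theorem \ref{thm:eq}. I would first verify that the sphere polynomial $f_\beta(x) = \sum_{i=1}^n (x_i-\beta_i)^2 - \beta_0$ is a specialization of the quadratic template $\sum_{i=1}^n h(\beta_i)(y_i-\beta_{n+i})^2 + \beta_0$ used in Theorem \ref{thm:inc}: take $h$ to be the constant function $1$ (the dummy parameters $\beta_1,\ldots,\beta_n$ of the template then disappear) and absorb an irrelevant sign into $\beta_0$. Theorem \ref{thm:inc} thus applies and directly gives
$$\beta EDdegree(C_{m,n}) \leq \beta EDdegree(C_{m,n+1}).$$

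Next, for $m$ in the range where Theorem \ref{thm:eq} applies to both sides—namely $m>n+2$, so that $m>n+1$ and $m>(n+1)+1$ both hold—Theorem \ref{thm:eq} identifies
$$EDdegree(\HH_{m,n}) = \beta EDdegree(C_{m,n}), \qquad EDdegree(\HH_{m,n+1}) = \beta EDdegree(C_{m,n+1}).$$
Substituting these into the inequality from the first step yields the corollary. This $m>n+2$ threshold is the natural one: it matches the hypothesis of Conjecture \ref{conj:circles}, and in the borderline case $m=n+2$ any $n+2$ generic points in $\Bbb C^{n+1}$ determine a unique $n$-sphere, so $\HH_{n+2,n+1}=\Bbb C^{(n+2)(n+1)}$ has EDdegree $1$ while $\HH_{n+2,n}$ already has positive codimension; so without the restriction the inequality would fail.

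I expect no real obstacle. The only point that deserves a sanity check is that the $h\equiv 1$ specialization is genuinely covered by the proof of Theorem \ref{thm:inc}: inspecting that proof, the map $\phi$ uses only the scalar value $h(\beta_n)$, as a multiplicative factor that splits evenly into two halves of magnitude $1/2$ when the coordinate $y_n$ is duplicated through $i\colon \Bbb C^n \hookrightarrow \Bbb C^{n+1}$. Constancy of $h$ causes no trouble, and the only bookkeeping change is that the dummy $\beta_i$-slots of the template collapse and $\phi$ is defined only on the sphere's true parameter space $(\beta_0,\beta_1,\ldots,\beta_n)$; otherwise the construction transfers verbatim.
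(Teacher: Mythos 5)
Your proof follows exactly the route the paper intends: chain Theorem~\ref{thm:inc} (monotonicity of $\beta EDdegree(C_{m,n})$ in $n$) with Theorem~\ref{thm:eq} (equality $EDdegree(\HH_{m,n}) = \beta EDdegree(C_{m,n})$ for $m>n+1$). The paper presents the corollary with only a one-sentence justification (``we can replace $C_{m,n}$ by $\HH_{m,n}$ in Theorem~\ref{thm:inc}''), so your write-up is really a fleshed-out version of the same argument.

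You do add two things that are genuine improvements over the paper's presentation. First, you check explicitly that the sphere polynomial fits the template of Theorem~\ref{thm:inc} with $h\equiv 1$, and you flag the small mismatch in parameter spaces (the template carries $n$ dummy slots $\beta_1,\ldots,\beta_n$ that collapse when $h$ is constant); this requires a minor restriction of the map $\phi$ to the sphere's actual parameter space, which the paper silently skips over. Second, and more substantively, you correctly identify that the argument only works for $m>n+2$, since Theorem~\ref{thm:eq} must be applied on \emph{both} sides of the inequality. The paper states the corollary without this hypothesis, but it is in fact needed: when $m=n+2$, $\HH_{m,n+1}=\Bbb C^{m(n+1)}$ has EDdegree $1$, while (e.g.\ by Theorem~\ref{thm:n1} for $n=1,\, m=3$, or by the numerical data for $n=2,\,m=4$) $EDdegree(\HH_{m,n}) > 1$, so the inequality fails. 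Your counterexample exposes a real gap in the paper's stated corollary, which should be read with the tacit hypothesis $m>n+2$.
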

For $n=1$ we can prove that there is a closed formula for the EDdegree of the hypothesis variety:
\begin{customthm}{1}
	$$EDdegree(\HH_{m, 1}) = 2^{m-1}-1$$
	\textit{and if $p$ is a real configuration, then the critical points of $\HH_{m, 1}$ with respect to $p$ are all real.}
\end{customthm}
\begin{proof}
	If $m=2$, then obviously $EDdegree(\HH_{m, 1})=1$. For $m>2$ we will use Theorem \ref{thm:eq} to compute the EDdegree of $\HH_{m, n}$ by computing the EDdegree of $C_{m, n}$. For $n=1$ we have the following defining equations for the critical ideal of $C_{m, 1}$:
	\begin{align*}
	(x_1-\beta_1)^2 - \beta_2 &= 0\\
	\vdots \\
	(x_m-\beta_1)^2 - \beta_2 &= 0\\
	2\lambda_1(x_1-\beta_1) &= x_1-p_1\\
	\vdots \\
	2\lambda_m(x_m-\beta_1) &= x_m-p_m\\
	-\sum 2\lambda_i(x_i-\beta_1) &= 0\\
	-\sum \lambda_i &= 0
	\end{align*}
	We may solve for each $x_i$ to get $x_i = \beta_1 + s_i \sqrt{\beta_2}$, where $s_i\in \{1, -1\}$ and the value of $\sqrt{\beta_2}$ is chosen such that the real part is non-negative. Note that when $s_i = s_j$ for all $1\leq i, j \leq m$ the above system only has a solution if $\beta_2 =0$ and $p_1=\dots =p_m$, which is not a generic configuration for $p$. Consequently, we will assume that  $s_i\neq s_j$ for some $i$ and $j$ and that $\beta_2\neq 0$. 
	
	By eliminating each $x_i$ and $\beta_1$, the above system of equations yields the  following matrix equation:
	$$
	\begin{bmatrix}
	-s_1 & s_2 & 0 & \dots & 0  \\
	-s_1 & 0 & s_3 & \dots & 0  \\
	\vdots & \vdots & \ddots & \dots & \vdots \\
	-s_1 & 0 & 0 & \dots & s_m \\
	s_1 & s_2 & s_3 &\dots & s_m \\
	1 & 1 & 1 & \dots & 1 
	\end{bmatrix}\lambda  = \begin{bmatrix}
	\frac{p_1-p_2}{\sqrt{\beta_2}} - (s_1-s_2) \\
	\frac{p_1-p_3}{\sqrt{\beta_2}} - (s_1-s_3) \\
	\vdots\\
	\frac{p_1-p_m}{\sqrt{\beta_2}} - (s_1-s_m)\\
	0\\
	0
	\end{bmatrix}
	$$
	Note that this system has a solution whenever the determinant of the following matrix vanishes:
	$$
	A:=\begin{bmatrix}
	-s_1 & s_2 & 0 & \dots & 0 & \frac{p_1-p_2}{\sqrt{y_2}} - (s_1-s_2) \\
	-s_1 & 0 & s_3 & \dots & 0 & \frac{p_1-p_3}{\sqrt{y_2}} - (s_1-s_3) \\
	\vdots & \vdots & \ddots & \dots & \vdots & \vdots\\
	-s_1 & 0 & 0 & \dots & s_m &\frac{p_1-p_m}{\sqrt{y_2}} - (s_1-s_m)\\
	s_1 & s_2 & s_3 &\dots & s_m & 0\\
	1 & 1 & 1 & \dots & 1 & 0
	\end{bmatrix}
	$$
	Let $S_i$ denote the $m\times m-$minor of the above matrix where the $i$th row and last column has been deleted. The determinant of $A$ can be expressed as:
	$$\text{det}(A) = \sum_{i=2}^{m} (-1)^{i}(\frac{p_1-p_i}{\sqrt{\beta_2}} - (s_1-s_i))S_{i-1}$$
	and the equation $\text{det}(A)=0$ has a solution whenever:
	$$\sum_{i=2}^{m} (-1)^{i}(p_1-p_i)S_{i-1} = \sqrt{\beta_2}\sum_{i=2}^{m}(-1)^{i}(s_1-s_i)S_{i-1}$$
	The $S_i$ will vanish only if $s_i=s_j$ for all $1\leq i, j\leq m$ which by assumption does not happen, otherwise they are non-zero. In fact, by symmetry $S_i = S_j$ for all $1\leq i, j\leq m-1$. Consequently, $\text{det}(A)=0$ if and only if:
	$$\sum_{i=2}^{m} (-1)^{i}(p_1-p_i) = \sqrt{\beta_2}\sum_{i=2}^{m}(-1)^{i}(s_1-s_i)$$
	which has a solution if and only if the signs on the sums on both sides are equal (the sign of $\sqrt{\beta_2}$ has already been determined). This will happen for half of all configurations of the $s_i$, i.e. for $\frac{2^m - 2}{2} = 2^{m-1}-1$ of them.
	
	Finally, note that if $p_1, \dots, p_m$ are real points then, $\beta_2$ is real, which will make $\lambda$ real and consequently yield $\beta_1$ and $x_1, \dots, x_m$ real.
\end{proof}
By Theorem \ref{thm:eq} it suffices to compute $\beta EDdegree(C_{m, n})$ in order to compute the EDdegree of $\HH_{m, n}$ and using the critical ideal of Corollary \ref{cor:crit} this is computationally tractable. The below table shows numerical results for some values of $m$ and $n$.
\begin{table}[H] \label{tab:exp}
\begin{center}
	\begin{tabular}{ c| c c c c c c c c c} 
		
		$n$\textbackslash $m$ & 2 & 3 & 4 & 5 & 6 & 7 & 8 & 9 & 10 \\ \hline
		1 & 1 & 3  & 7  & 15 & 31 & 63 & 127  & 255 & 511 \\
		2 & &4 & 26  & 90 & 266 & 730 & 1914 & 4858 & 12026 \\ 
		3 &  & & 31  & 127 & 423 & 1287 & 3703 & 10231  & 27383 \\ 
		4 &  & &   & 134 & 480 & 1564 & 4820 & 14260 & 40820 \\ 
		5 &  & &   &  & 489 & 1645 & 5265 & 16273 & 48881 \\

	\end{tabular}
\end{center}
\caption{Numerical results for computing $\beta EDdegree(C_{m, n})$.}
\end{table}
From the above table we make the following conjecture on the growth of the EDdegree as $m$ and $n$ grows.
\addtocounter{conj}{-2}
\begin{conj} 
The number of critical $(n-1)-$spheres of a generic configuration of $m$ points in $\Bbb C^n$, where $m>n+2$, is given by: 
	$$EDdegree(\HH_{m, n}) = EDdegree(\HH_{m-1, n}) +\sum_{k=0}^{m-1} { m-1 \choose k }\sum_{l=0}^n{{k+1} \choose l}- 2^{m-1} - m2^{m-2}$$
\end{conj}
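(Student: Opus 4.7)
The plan is to establish the recursion by induction on $m$, using Theorem \ref{thm:eq} to replace $EDdegree(\HH_{m,n})$ with $\beta EDdegree(C_{m,n})$ (valid since $m>n+2>n+1$), and then analyzing the Lagrangian critical ideal of Corollary \ref{cor:crit} in the spirit of the proof of Theorem \ref{thm:n1}. At a critical configuration $(x,\beta)$ with respect to $p$, the normal condition $p_i - x_i = 2\lambda_i(x_i - c)$, where $c=(\beta_1,\ldots,\beta_n)$, forces each $x_i$ to lie on the affine line through $c$ and $p_i$. Writing $x_i = c + t_i(p_i - c)$ and using $\|x_i-c\|^2 = \beta_0$ yields $t_i = \epsilon_i \sqrt{\beta_0}/\|p_i-c\|$ for a sign $\epsilon_i \in \{+1,-1\}$, with corresponding multiplier $\lambda_i = \epsilon_i\|p_i-c\|/(2\sqrt{\beta_0}) - 1/2$. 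Critical points thus stratify by sign vectors $\epsilon \in \{\pm 1\}^m$.

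Substituting into the remaining Lagrange conditions $\sum_i \lambda_i = 0$ and $\sum_i \lambda_i(x_i - c) = 0$ gives, for each $\epsilon$, a reduced system of $n+1$ equations in the $n+1$ unknowns $(c,\sqrt{\beta_0})$ whose isolated solutions $N(\epsilon)$ sum to $\beta EDdegree(C_{m,n}) = \sum_\epsilon N(\epsilon)$. The first Lagrange equation reduces to $\sum_i \epsilon_i \|p_i - c\| = m\sqrt{\beta_0}$, the direct generalization of the determinantal equation analyzed in Theorem \ref{thm:n1}, while the remaining $n$ equations constrain $c$ to nonlinear hypersurfaces depending on $\epsilon$. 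To set up the recursion in $m$, the plan is to decompose the sum over $\epsilon$ according to the sign $\epsilon_m$ and the number $k$ of coordinates in $(\epsilon_1,\ldots,\epsilon_{m-1})$ equal to $+1$; the leading term $EDdegree(\HH_{m-1,n})$ then arises from sign patterns where $p_m$ reuses a critical sphere of the first $m-1$ points, while the combinatorial factor $\sum_{k=0}^{m-1}\binom{m-1}{k}\sum_{l=0}^{n}\binom{k+1}{l}$ records the contribution of the newly introduced patterns, with the inner sum being recognizable as the Zaslavsky chamber count for a generic arrangement of $k+1$ hyperplanes in $\Bbb C^n$. The subtractions $-2^{m-1}$ and $-m 2^{m-2}$ should eliminate overcounted patterns corresponding respectively to globally constant sign vectors and single-flip sign vectors, for which the reduced Lagrange system becomes rank-deficient.

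The principal obstacle will be rigorously connecting the intrinsic solution count $N(\epsilon)$ of the $\epsilon$-reduced Lagrange system to the conjectural hyperplane-arrangement chamber count. In dimension one this correspondence collapsed to a single sign-matching condition, but for $n\ge 2$ the center-condition equations form a genuinely nonlinear system whose structure depends subtly on $\epsilon$, so a direct counting argument appears delicate. A viable route is to specialize $p$ to a highly symmetric configuration (for instance a small perturbation of the vertices of a regular simplex together with its barycenter) for which $N(\epsilon)$ can be computed orbit-by-orbit under the symmetry group, and then transfer to generic $p$ using \cite[Theorem 7.1.1(2)]{bertini} as in the proof of Lemma \ref{lem:count}. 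The matching lower bound would require producing an explicit set of critical configurations of the stated cardinality, while simultaneously controlling solutions that degenerate to the singular locus of $C_{m,n}$ or escape to infinity under the specialization. Establishing the bijection between reduced-system solutions and chamber counts, together with the precise identification of the correction terms $-2^{m-1}-m2^{m-2}$ with the rank-deficient sign strata, is where the main work of the proof lies.
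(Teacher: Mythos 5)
This statement is a \emph{conjecture} in the paper, not a theorem: the only support the paper offers is the numerical data in Table~\ref{tab:exp}, together with the observation that for $n=2$ the successive differences match a binomial transform of a known integer sequence. There is no proof to compare against, so the task here is really to assess whether your outline is a plausible route to a future proof.

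Your setup is sound as far as it goes, and it correctly generalizes the machinery that the paper does use for the $n=1$ case (Theorem~\ref{thm:n1}). Replacing $EDdegree(\HH_{m,n})$ by $\beta EDdegree(C_{m,n})$ via Theorem~\ref{thm:eq} is valid for $m>n+1$, the passage to the Lagrangian critical ideal of Corollary~\ref{cor:crit} is exactly what the paper uses computationally, and the stratification by branch choices $\epsilon\in\{\pm1\}^m$ (arising from $x_i = c + t_i(p_i-c)$ with $t_i$ determined up to sign by the sphere equation) is the correct generalization of the $s_i$ in the $n=1$ proof. The identification of the inner sum $\sum_{l=0}^{n}\binom{k+1}{l}$ with Zaslavsky-type chamber counts is an appealing heuristic for where the combinatorics might come from.

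But you have, as you yourself flag, left the entire analytical core unproved, and the gap is not cosmetic. Three specific points: (1) For $n=1$ the $\epsilon$-reduced system collapses to a single scalar equation whose solvability is decided by a sign comparison; for $n\geq 2$ the system $\sum_i\lambda_i = 0$, $\sum_i\lambda_i(x_i-c)=0$ with $\lambda_i$ and $x_i$ expressed in terms of $(c,\sqrt{\beta_0},\epsilon)$ is a genuinely nonlinear system of $n+1$ equations, and you have not explained why $N(\epsilon)$ should be computable in closed form, let alone match a chamber count. (2) The claimed interpretations of the correction terms $-2^{m-1}$ and $-m2^{m-2}$ as ``constant sign'' and ``single-flip'' degeneracies are plausible guesses but are not derived from the algebra; without an explicit identification of which $\epsilon$-strata are rank-deficient and why their apparent contributions must be subtracted, this remains bookkeeping by analogy with the formula rather than an argument. (3) The transfer from a symmetric configuration to a generic one via the parameter-continuation bound of Lemma~\ref{lem:count} gives only a lower bound on $\beta EDdegree$, and you would additionally need an upper bound (for instance by controlling the degree of the reduced system uniformly over $\epsilon$ and excluding solutions at infinity or on the singular locus) to pin down the exact count. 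So the proposal is a reasonable research program, consistent in spirit with how the paper proves the $n=1$ case, but it does not close the conjecture and you have honestly identified that the hard part has not been done.
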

For $n=1$ the above formula reduces to the one stated in Theorem \ref{thm:n1}.
\begin{remark}
	For $n=2$, the above sequence, as $m$ increases, is the cumulative sum of a known sequence, namely a binomial transformation of the (modified) triangle numbers (A084264).
\end{remark}

\subsection{Paraboloids}\label{sec:para}
Let $f_\beta$ be the polynomial
$$f_\beta(x) = (\sum_{i=1}^n \beta_i x_i)^2 + \sum_{i=1}^n \beta_{n+i}x_i + \beta_0$$
Then $\beta$ parametrizes all paraboloids in $\Bbb C^n$. Note that $f_\beta$ is not homogeneous in any of the $\beta$ variables and that the projection $\pi$ is generically finite. 

To compute the EDdegree of $\HH_{m, n}$ we have to compute a basis for the ideal corresponding to the Zariski closure of $\pi(C_{m, n})$. To do this we have to compute the $k$th order elimination ideal of the ideal of $C_{m, n}$ with the ordering $\beta, x$. This is intractable even for small examples. Therefore we use Bertini \cite{BHSW06} to compute $\beta EDdegree(C_{m, n})$ numerically. By Theorem \ref{thm:ineq} $\beta EDdegree(C_{m, n})$ serves as an upper bound on the EDdegree of $\HH_{m, n}$, and by Conjecture \ref{conj:eq} this bound is in this case believed to be an equality for $m>k$. 

The results of computing $\beta EDdegree(C_{m, n})$ for a series of $m$ and $n$ is shown in the table below. The computations for $n=2$ and $m>8$ and for $n>2$ either did not finish within five days of computation time or failed due to other errors related to the problem size.  
\begin{table}[H]
\begin{center}
	\begin{tabular}{ c| c c c c c c c c c} 
		
		$n$\textbackslash $m$ & 2 & 3 & 4 & 5 & 6 & 7 & 8 \\ \hline
		1 & 1 & 3  & 7  & 15 & 31 & 63 & 127   \\
		2 & & & 25  & 261 & 1219 & 4590 & 15722  \\

	\end{tabular}
\end{center}
\caption{Numerical results for computing $\beta EDdegree(C_{m, n})$.}
\end{table}
\subsection{Ellipsoids}\label{sec:ellipse}
Let $f_\beta$ be the polynomial 
$$f_\beta(x) = \sum_{i=1}^n \beta_{i}(x_i-\beta_{n+i})^2 - \beta_0$$
where $\beta_0 = \sum_{i=1}^n \alpha_i\beta_i + \gamma$ and $\alpha_i, \gamma \in \Bbb C$ are generic. We choose $\beta_0$ in such a way in order to (generically) de-homogenize the equation. Otherwise it would be homogeneous in the coordinates $\beta_0, \dots, \beta_n$ which would mean that $\pi$ is not generically finite. Then $f_\beta$ parametrizes  the set of all ellipsoids in $\Bbb C^n$.

As with the paraboloids in the previous section it is not tractable to compute the elimination ideals of $C_{m, n}$. Thus we may only upper bound the EDdegree of $\HH_{m, n}$ by computing $\beta EDdegree(C_{m, n})$, as by Theorem \ref{thm:ineq}. It holds in this case as well that by Conjecture \ref{conj:eq} we believe this upper bound to be an equality.

The results of computing $\beta EDdegree(C_{m, n})$ for a series of $m$ and $n$ is shown in the table below. As for the paraboloids, the computations for $n=2$ and $m>8$ and for $n>2$ either did not finish within five days of computation time or failed due to other errors related to the problem size. 
\begin{table}[H]
\begin{center}
	\begin{tabular}{ c| c c c c c c c c c} 
		
		$n$\textbackslash $m$ & 2 & 3 & 4 & 5 & 6 & 7 & 8  \\ \hline
		1 & 1 & 3  & 7  & 15 & 31 & 63 & 127   \\
		2 & & & 47  & 725 & 5217 & 28783 &  141507 \\

	\end{tabular}
\end{center}
\caption{Numerical results for computing $\beta EDdegree(C_{m, n})$.}
\end{table}
Note that the growth rate of $\beta EDdegree(C_{m, n})$ is increasing for the five examples above; the hyperplanes, affine subspaces of codimension $\leq r$, $(n-1)-$spheres, paraboloids and ellipsoids. This is what we would expect since each of the examples represent more and more complicated classes of varieties. 


\subsection{Connection to polynomial neural networks}\label{sec:nn}
A \textit{polynomial neural network} $N_\beta \colon \Bbb C^n \to \Bbb C^r$ is a alternating composition of linear and polynomial maps:
$$N_\beta (x) = f_l\circ \alpha_{l} \circ \dots  \circ f_1\circ \alpha_{1}$$
where $f_i \colon \Bbb C^{n_i} \to \Bbb C^{n_i}$ are polynomial maps, $\alpha_{i}\colon \Bbb C^{n_{i-1}} \to \Bbb C^{n_i}$ are affine linear maps and $n=n_0, n_1, \dots, n_l=r$ are the dimensions of the intermediate spaces. The composition $N_\beta$ is a polynomial map. A \textit{neural network architecture} is a specific choice of $n_0, n_1, \dots, n_l$ and $f_1, \dots, f_l$, leaving the linear maps $\alpha_1, \dots, \alpha_l$ free to be optimized over. Each neural network architecture thus defines a subclass (which is in fact a subvariety) of polynomial maps from $\Bbb C^n$ to $\Bbb C^r$. Properties of these type of classes of polynomial maps is studied in \cite{2019arXiv190512207K}.

We now note in the following examples that all of the prescribed classes studied in the previous sections can be expressed as polynomial neural network architectures. Consequently, we can characterize the complexity of their architectures using the algebraic complexity. 

\begin{example}[Linear]
	Let $\alpha_1\colon \Bbb C^n\to \Bbb C^r$ be the linear map described by Equation (\ref{eq:linearsys}) and let $f_1$ be the identity. 
\end{example}

\begin{example}[Spheres]
	Let $\alpha_1(z) = z-\beta \colon \Bbb C^n \to \Bbb C^n$, $f_1(z) = z\odot z$ and $\alpha_2(z) = \textbf{1}^T z - \beta_0 \colon \Bbb C^n \to \Bbb C$, where $\odot$ denotes the Hadamard product.  
	Then $$N_\beta(z) = \sum_{i=1}^n (z_i-\beta_i)^2 - \beta_0$$
	which is equivalent to a polynomial defining a sphere as in Section \ref{sec:spheres}.
\end{example} 
\begin{example}[Paraboloids] Let $\alpha_1(z) = Bz \colon \Bbb C^n \to \Bbb C^{2}$, $f_1(z) = (z_1^2, z_2)$ and $\alpha_2(z) = \textbf{1}^T z + \beta_0\colon \Bbb C^{2} \to \Bbb C$, where 
	$$B = \begin{bmatrix} 
	\beta_1 & \beta_2 & \dots & \beta_n \\
	\beta_{n+1} & \beta_{n+2} & \dots & \beta_{2n}
	\end{bmatrix}$$
	Then $$N_\beta(z) = (\sum_{i=1}^n \beta_i z_i)^2 + \sum_{i=1}^n \beta_{n+i}z_i + \beta_0$$
	which is equivalent to a polynomial defining a paraboloid as in Section \ref{sec:para}.
\end{example}
\begin{example}[Ellipsoids]
	Let $\alpha_1(z) = z-\beta \colon \Bbb C^n \to \Bbb C^n$, $f_1(z) = \text{diag}(c)z\odot z$ and $\alpha_2(z) = \textbf{1}^T z - \beta_0\colon \Bbb C^n \to \Bbb C$, where $\beta_0 = \sum_{i=1}^n \alpha_i\beta_i + \gamma$ as in Section \ref{sec:ellipse}.  
	Then $$N_\beta(z) = \sum_{i=1}^n c_i(z_i-\beta_i)^2 - \beta_0$$
	which is equivalent to a polynomial defining an ellipsoid as in Section \ref{sec:ellipse}.
\end{example}

\subsection{Computing the EDdegree}
All numerical results in the paper were computed on the Tegner supercomputer at Parallelldatorcentrum (PDC) KTH, using one or more computational nodes. Each node consists of 24 Intel E5-2690v3 Haswell cores and 512Gb of RAM. To counter the fact that the numerical methods may miss points in the critical ideal, all problem instances were run several times until no more new solutions appeared. 

\bibliographystyle{plain} 
\bibliography{interpolation}

\end{document}